\newtheorem{assumptions}{Assumptions}[section]
\newcommand{\dom}[1]{\mathrm{dom}(#1)}
\newcommand{\epi}{\mathop{\rm epi}}
\renewcommand{\phi}{\varphi}
\newcommand{\eps}{\varepsilon}
\newcommand{\norm}[1]{\left\Vert#1\right\Vert}
\newcommand{\scr}[1]{\mathcal{#1}} %scripted letters
\newcommand{\dl}[1]{\mathbb{#1}}
\renewcommand{\Re}{\dl{R}}
\newcommand{\col}[1]{\left\{#1\right\}}
\newcommand{\conv}{\mathop{\rm co}}
\newcommand{\inte}{\mathop{\rm int}}
\newcommand{\bd}{\mathop{\rm bd}}
\newcommand{\cl}[1]{{\rm cl}^{#1}}
\newcommand{\gph}{\mathop{\rm gph}}
\newcommand{\pmord}{\partial}
\newcommand{\pfrech}{\hat{\partial}}
\newcommand{\nge}{\mathop{\rm N}\nolimits}
\newcommand{\nfrech}{\hat{\nge}}
\newcommand{\nmord}{\nge}
\newcommand{\fc}[2]{: #1 \rightarrow #2}
\newcommand{\prb}{\dl{P}}
\newcommand{\sph}{\dl{S}^{m-1}}
\let\epsilon\varepsilon
\journalname{Working Paper}
\newcommand{\tto}{\rightrightarrows}
\def\dst{\operatorname{d}}
\DeclareMathOperator*{\esssup}{ess\,sup}
\renewcommand{\H}{\mathcal{X}}
\renewcommand{\S}{\mathcal{S}}
\newcommand{\G}{\mathcal{G}}
\newcommand{\zep}[3]{{z_{#1}^{#2,#3}}}
\begin{document}
	
	%%
	%% Start line numbering here if you want
	%%
	%\linenumbers
	
	\title{Probability functions generated by set-valued mappings: a study of first order information%\thanks{Grants or other notes
		%about the article that should go on the front page should be
		%placed here. General acknowledgments should be placed at the end of the article.}
	} \subtitle{}
	
	%\titlerunning{Short form of title}        % if too long for running head
	
	\author{Wim van Ackooij \and  Pedro {P{\'e}rez-Aros}\and Claudia Soto }
	
	%\authorrunning{Short form of author list} % if too long for running head
	
	\institute{   W. van Ackooij \at
		EDF R\&D. OSIRIS
		7, Boulevard Gaspard Monge, F-91120 Palaiseau, France\\
		%Tel.: +33-(0)1-47-655831\\
		%Fax: +33-(0)1-47-653037\\
		\email{wim.van-ackooij@edf.fr}            \\
		\and
		P. {P{\'e}rez-Aros} \at
		Instituto de Ciencias de la Ingenier\'ia, Universidad de O'Higgins, Rancagua, Chile\\
		\email{pedro.perez@uoh.cl}\\
			\and
		C. {Soto} \at
		Universidad de Chile, DIM-CMM and Instituto de Ciencias de la Ingenier\'ia, Universidad de O'Higgins, Rancagua, Chile\\
		\email{csoto@dim.uchile.cl}
	}
	
	\date{Received: date / Accepted: date}
	% The correct dates will be entered by the editor

	\maketitle
	
	\begin{abstract}
		Probability functions appear in constraints of many optimization problems in practice and have become quite popular. Understanding their first-order properties has proven useful, not only theoretically but also in implementable algorithms, giving rise to competitive algorithms in several situations. Probability functions are built up from a random vector belonging to some parameter-dependent subset of the range of a given random vector. In this paper, we investigate first order information of probability functions specified through a convex-valued set-valued application. We provide conditions under which the resulting probability function is indeed locally Lipschitzian as well as subgradient formul{\ae}. The resulting formul{\ae} are made concrete in a classic optimization setting and put to work in an illustrative example coming from an energy application.
		
		\keywords{Probability functions  \and
			Spherical radial-like decomposition \and   Set-valued mapping \and Lipschitz-like continuity \and Generalized differentiation}
		% \PACS{PACS code1 \and PACS code2 \and more}
		% \subclass{MSC code1 \and MSC code2 \and more}
		\subclass{MSC 90C15}
	\end{abstract}
	
	\section{Introduction}
	
	In many problems from practice, one faces the situation wherein a decision $x \in \Re^n$ has to be taken, ensuring some best economic outcome, all while ensuring the satisfaction of several constraints. The latter constraints usually represent to some degree a set of physical restrictions or rules of use. It is not infrequent that the outcome of the ``constraints" is equally impacted by the random realization of a (random) vector. Thus, it becomes necessary to give an appropriate meaning to what it means for $x$ to satisfy the ``constraints." An intuitive and elegant way of doing so is by making use of so-called chance or probabilistic/probability constraints. The idea is clear: feasibility of $x$ means that with high enough probability the given inequality constraints hold true. This can thus be formalized as follows. We are given a mapping $g \fc{\Re^n \times \Re^m}{\Re^k}$, a random vector $\xi \in \Re^m$, typically having a density w.r.t. the Lebesgue measure and we thus define the probability function $\phi \fc{\Re^n}{[0,1]}$ by setting:
	\begin{equation*}
	    \phi(x) := \prb\left( g(x,\xi) \leq 0 \right). %\label{eq:theprb}
	\end{equation*}
	The user (or decision maker) then stipulates a safety-level $p \in (0,1)$ and feasibility is understood as $x$ satisfying $\phi(x) \geq p$ on top of any other ``simple" or ``deterministic" constraints. As we have just specified, the mapping $\phi$ itself is clearly a non-linear map and can itself not be concave, unless it is constant. This does not mean however that the probabilistic constraint itself can not define a convex feasible set - that is indeed very well possible! The interrogation of possible analytic properties of $\phi$, its upper-level sets, or optimization problems having $\phi$ as a constituent is natural. We refer the reader to 
	\cite{vanAckooij_2020} for a recent overview of the many possibilities. For instance, statements regarding convexity of upper-level sets of probability functions are available. Results range from Prékopa's classic ``log-concavity" Theorem, e.g., \cite{Prekopa_1971} asserting convexity of all upper-level sets to recent investigations regarding ``eventual convexity", e.g., \cite{Henrion_Strugarek_2008,vanAckooij_Laguel_Malick_Matiussi-Ramalho_2022} - ensuring convexity of the upper-level sets beyond a given computable level.
	
	Be this as it may, significant effort has also been put into understanding ``first-order" properties of $\phi$. This is quite natural if one thinks of setting up computational approaches for actually solving the underlying optimization problems. On this front too, classic results are available regarding the differentiability of the probability function $\phi$, e.g., \cite{Uryasev_1995}. The latter results are quite general - and somewhat abstract - in nature. To the best of our knowledge, we are unaware of a concrete implementation of any ``gradient formul\ae\/" from that string of research. Motivated by being able to efficiently compute or estimate a ``gradient" of the probability function, a different approach has turned out to be quite fruitful. Originally, by assuming $\xi$ to be of a specific parametric type and assuming $g$ to have additional structure, in addition to expected properties such as continuous differentiability, progress in this direction has been made, e.g., \cite{Royset_Polak_2004,Royset_Polak_2007,vanAckooij_Henrion_2014}. It was also found that by using tools from nonsmooth analysis, more could be stated, and a great many assumptions relaxed. We can briefly cite, 
	\cite{vanAckooij_Henrion_2016, vanAckooij_Malick_2016, vanAckooij_Henrion_Perez-Aros_2019, vanAckooij_Javal_Perez-Aros_2020, vanAckooij_Perez-Aros_2019, vanAckooij_Perez-Aros_2020, vanAckooij_Perez-Aros_2021, vanAckooij_Perez-Aros_2022, Hantoute_Henrion_Perez-Aros_2017} for some contributions to this string of research. The latter results leverage, in part, on recent investigations concerning Leibniz-like rules for nonsmooth analysis, e.g., \cite{Correa_Hantoute_Perez-Aros_2019, Correa_Hantoute_Perez-Aros_2020}.
	
	This, of course, interrogates on the reason for these extensions and in particular how they relate to practice. In fact, it is inspiration from practice that drives the development. This will also be true of the current work. In fact, we will be interested in a situation wherein for a fixed $x$, the mapping $g$ itself is not convex in the second argument, but does define a convex set. This is a relevant case if for instance some components of the mapping $g$ are specified through fractions - with convex numerators and concave denominators, whereas other components may specify a convex inequality simply. The application from energy that we will consider falls in this class. It turns out that a fruitful way of looking at the question takes abstraction from the idea of an underlying system of inequalities and directly considers a set-valued application. Formally, the main object studied in this work corresponds to the following probability function 
	\begin{align}\label{P00} 
		\varphi(x):= \mathbb{P} (\omega \in \Omega: \xi(\omega) \in \S_i(x) \text{ for all }i=1,\ldots,s),
	\end{align}
	where  $\xi: \Omega \to \Re^m$ is a random vector from a probability space $(\Omega,\mathcal{A},\mathbb{P})$ and $\S_i:\H \tto \Re^m$ with $i=1,\dots,s$ is a family of set-valued mappings defined in some separable reflexive Banach space $\H$. The abstract set-valued formulation \eqref{P00} of a  probability function will allow us to understand the variational properties of the function $\varphi$ (Lipschitz continuity and subdifferentials) in terms of the  variational properties of the set-valued mappings $\S_i$ (Lipschitz-like continuity  and coderivatives). Here, it is important to note that this approach takes advantage of the fact that the Lipschitz-like continuity and coderivatives are proper to the set $\S_i$ and do not depend on some specific representation given by sublevel sets $g(x,z) \leq 0$, as in classical studies. Consequently the formul{\ae} derived here provide a more general take and allow us to consider situations not covered previously.
	
	This paper is organised as follows. Section \ref{section:preliminaries} introduces the setting and notation, and recalls some important notions used throughout this paper. Section \ref{section:inner_enlargement} formulates and examines an inner enlargement of the probability function \eqref{P00}. After establishing some preliminary results, a subdifferential formula and the Lipschitz continuity of the probability function \eqref{P00} are provided in Section \ref{section:main}. 
	Finally, Section \ref{sec:applications} provides the description of a case from energy where our results are illustrated in a computational experiment.

	\section{Notation, background and blanket setting}
	\label{section:preliminaries}
	
	\subsection{Basic notation}
	In the following, $(\H$,$\|\cdot\|)$ will be a separable reflexive Banach space, $\H^\ast$ its topological dual and duality product $\langle x^\ast,x\rangle = x^*(x)$ for $x\in\H$, $x^\ast\in \H^\ast$. For a point $x\in\H$ and $r\geq 0$ the closed ball of radius $r$ and centered at $x$ is denoted by $\mathbb{B}_r(x)$. Similarly for $x^\ast\in\H^\ast$, $\mathbb{B}_r(x^\ast)$ denotes the closed ball in the dual space. The unit balls in $\H$ and $\H^\ast$ are denoted by $\mathbb{B}$ and $\mathbb{B}^\ast$, respectively. The norm on $\H^\ast$ is denoted by $\|\cdot\|_\ast$ and let us denote by $\cl{w^\ast}C$ the closure of a subset $C\subset \H^\ast$ with respect to $w^\ast$, the weak$^\ast$-topology on $\H^\ast$. We adopt the following notation: $\rightarrow$ (respectively $\rightharpoonup$) denotes the convergence with respect to the norm-topology (respectively the weak$^\ast$-topology). Furthermore, for a function $f$ and subset $C\subseteq \H$ we write $x_k\xrightarrow{f} x$ (respectively $x_k\xrightarrow{C} x$) to mean that $x_k\to x$ with $f(x_k)\to f(x)$ (respectively $x_k\to x$ with $x_k\in C$).
	
	We also use $\|\cdot\|$ to denote the Euclidean norm of the Euclidean space $\mathbb{R}^m$ and for a given nonempty set $C\subseteq \Re^m$ we let $\bd C$ and $\inte C$ denote the boundary and interior of $C$, respectively. The set of points in $C$ at minimum distance from $z\in\Re^m$ is the projection $P_C(z)$ and that minimum distance is defined by $\dst(z,C):=\inf\{\|z-w\|:w\in C\}$. We recall that when a nonempty set $C\subseteq \Re^m$ is furthermore closed and convex, $P_C(z)$ is the unique element in $C$ such that $\dst(z,C)=\|z-P_C(z)\|$. %We assume that the space $\H\times\Re^m$ is equipped with an appropriate norm.
	
	\subsection{Tools from variational analysis and generalized differentiation}

	For a given closed subset $C \subseteq \H$, the regular (Fréchet) and the basic (limiting or Mordukhovich) normal cone to $C$ at $x$ are denoted and defined respectively by
	\begin{equation*}
	    \nfrech_C(x):=\left\{
	    x^\ast\in\H^\ast\mid\limsup_{x'\xrightarrow{C} x}\frac{\langle x^\ast,x'-x\rangle}{\|x'-x\|}\leq 0
	    \right\}
	\end{equation*}
	and
	\begin{equation*}
	    \nmord_C(x):=\left\{
	    x^\ast\in\H^\ast\mid \exists x_k\xrightarrow{C} x,\,\exists x_k^\ast\rightharpoonup x^\ast:\,x_k^\ast\in \nfrech_C(x_k)
	    \right\}.
	\end{equation*}
	The (effective) domain and epigraph of a function $f:\H\to\Re\cup\{+\infty\}$ are the sets
	\begin{equation*}
	\dom f :=\{x\in\H:f(x)<+\infty\}
	   \text{ and }
	   \,
	   \epi(f):=\left\{(x,\alpha)\in\H\times \Re:f(x)\leq \alpha\right\}
	\end{equation*}
	respectively. For a function $f$ with closed epigraph (i.e., $f$ is lower semicontinuous) its regular  (Fréchet) and basic (limiting or Mordukhovich) subdifferentials at $x\in \H$ may be defined through the corresponding normal cones at its epigraph (see, e.g.,\cite{Mordukhovich_2006,Mordukhovich_2006b,Mordukhovich_2018,Rockafellar_Wets_2009}), or more explicitly, they can be represented as  
	\begin{equation*}
	    \pfrech f (x)=
	    \left\{x^\ast\in \H^\ast\mid \liminf_{x'\to x}\frac{f(x')-f(x)-\langle x^\ast,x'-x\rangle}{\|x'-x\|}\geq 0\right\}
	\end{equation*}
	and 
	\begin{equation*}
	    \pmord f (x):=
	    \left\{x^\ast\in \H^\ast\mid \exists x_k\xrightarrow{f} x,\,\exists x_k^\ast\rightharpoonup x^\ast:\, x_k^\ast \in \pfrech f(x_k) \right\},
	\end{equation*}
	respectively.
	
	Recall that a set-valued mapping $\S:\H\tto\Re^m$ is a mapping that attributes to each $x\in \H$ a subset $\S(x)\subset \Re^m$ and is uniquely defined by its graph
	\begin{equation*}
	    \gph \S:=\{(x,z)\in \H\times\Re^m:z\in \S(x)\}.
	\end{equation*}
	For a set-valued mapping $\S:\H\tto\Re^m$ with closed graph we define its coderivative at $(x,z)\in \gph \S$ as the set-valued mapping $D^\ast \S(x,z):\Re^m\tto \H^\ast$ such that
	\begin{equation*}
	    D^\ast \S(x,z)(z^\ast):=\left\{
	    x^\ast\in\H^\ast\mid (x^\ast,-z^\ast)\in \nmord_{\gph \S}(x,z)
	    \right\}.
	\end{equation*}

	Let $\S :\H \tto \mathbb{R}^{m}$ be a set-valued mapping and $(x,z)\in\gph \S$. We say that $\S$ is locally Lipschitz-like (has the Aubin property) around $(x,z)$ if there exist $\kappa>0$ and $\delta>0$ such that
	    \begin{equation*}
	        \dst(z',\S(x'))\leq \kappa\|x'-x''\|,\,\forall x',x''\in \mathbb{B}_\delta(x)\text{ and } z'\in\S(x'')\cap \mathbb{B}_{\delta}(z).
	    \end{equation*}

%	\begin{definition}\label{def:inte}
%		Let $\S :\H \tto \mathbb{R}^{m}$ be a set-valued mapping and $U \subseteq \H$. We say that $\S$ has the interior continuity property on $U$, if for every $x\in U$  and $z\in \inte( \S(x)) $ there exists $r>0$ such that
%		\begin{align*}
%			\mathbb{B}_r(z) \subseteq \S(x'), \text{ for all  } x'\in \mathbb{B}_r(x).
%		\end{align*}
%	\end{definition}

%	Let us notice that Definition \eqref{def:inte} always hold for a mapping $\S(x):= \{ z \in \mathbb{R}^{m} : g(x,z) \leq 0  \}$ provided that  $ \inte( \S(x)) = \{z \in \mathbb{R}^{m} : g(x,z) < 0  \}$ and $g$ is continuous.
\subsection{Assumptions on the underlying set-valued mappings and consequences thereof}

Throughout the manuscript we will assume that given a point of interest $\bar{x}$ the   finite family of set-valued mappings $(\mathcal{S}_i)_{i=1}^s$, which define the probability function \eqref{P00}, satisfies following basic assumptions:
\begin{assumptions}
Let $\bar x \in \H$ be given. There exists a neighborhood $U$ of $\bar x$  such that  for all $i=1,\ldots, p$
	\begin{equation}\label{Assump_Setvalued}\tag{H}
		\begin{array}{cl}
			& a)\ 0 \in \S_i(x) \text{ for all } x\in U\\
			& b)\ \S_i \text{ is locally Lipschitz-like around all } (x, z)\in \gph \S_i  \text{ and } x \in U \\
			& c)\ \S_i \text{ has closed graph and convex values}\\
		\end{array}
	\end{equation}
\end{assumptions}

We can readily observe that when $\bar{z}\in \S_i(x)$ for all $x\in U$ and all $i=1,\ldots,s$ and $\S_i$ satisfies only conditions $b)$ and $c)$ of \eqref{Assump_Setvalued}, that we may consider the set-valued mapping $\tilde{\S}_i(x)=\S_i(x)-\bar{z}$ which will thus satisfy \eqref{Assump_Setvalued}. Condition a) can thus be seen to be a relatively mild condition. 

The next lemma is a technical result which establishes an upper-estimation of the subdifferential of the function $(x,z) \mapsto \frac{1}{2}\dst^{2}(z,\S_i(x)) $ in terms of the coderivative of the mapping $\S_i$.

\begin{lemma}\label{lemma:derivative}
Let $\S_i: \H \tto \Re^m$ be a set-valued mapping satisfying \eqref{Assump_Setvalued} at $\bar x$. Then, the function $u \fc{\H \times \Re^m}{\Re_+}$ given by $u(x,z)=\frac{1}{2}\dst^{2}(z,\S_i(x)) $ is locally Lipschitz around any $(x,z)\in U \times \mathbb{R}^m$. Moreover, for all $(x,z) \in U \times \mathbb{R}^m$, the following subdifferential estimate is valid:
\begin{equation}\label{eq:derivative}
		\pmord u(x,z)\subseteq
		    D^\ast \S(x,P_{\S_i(x)}(z) )( P_{\S_i(x)}(z)-z)\times \{z-P_{\S_i(x)}(z)\}.
\end{equation}
\end{lemma}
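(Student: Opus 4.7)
The overall plan is to recognize $u$ as the optimal value function of a parameterized projection problem, and then deploy the Mordukhovich marginal-function calculus together with the definition of the coderivative. The two parts of the statement, local Lipschitzness and the subgradient estimate, are addressed in sequence; the Lipschitz property is itself a prerequisite for the continuity condition that makes the marginal-function rule applicable.

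For local Lipschitzness, first observe that, thanks to (H)$a$), $\operatorname{dist}(z,\S_i(x))\le \|z\|$ for every $x\in U$, so each projection $P_{\S_i(x)}(z)$ lies in $\mathbb{B}_{2\|z\|}(0)$ and is well defined by (H)$c$). The distance function $z\mapsto \operatorname{dist}(z,\S_i(x))$ is $1$-Lipschitz in $z$ uniformly in $x$. To control the variation in $x$, pick $(x_0,z_0)\in U\times\Re^m$ and set $\bar w_0:=P_{\S_i(x_0)}(z_0)\in\S_i(x_0)$. The Lipschitz-like assumption (H)$b$) at $(x_0,\bar w_0)$ yields constants $\kappa,\delta>0$ such that, for every $x,x'$ near $x_0$ and $w\in\S_i(x)$ near $\bar w_0$, one has $\operatorname{dist}(w,\S_i(x'))\le\kappa\|x-x'\|$. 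Combining this with closed graph and convex values, one shows by a standard argument that $(x,z)\mapsto P_{\S_i(x)}(z)$ is continuous at $(x_0,z_0)$; hence the projection of $z$ remains in $\mathbb{B}_\delta(\bar w_0)$ on a neighbourhood of $(x_0,z_0)$, which in turn gives $|\operatorname{dist}(z,\S_i(x))-\operatorname{dist}(z,\S_i(x'))|\le\kappa\|x-x'\|$ locally. Squaring the distance and using the bound $\operatorname{dist}(z,\S_i(x))\le\|z\|$ then yields local Lipschitzness of $u$.

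For the subdifferential estimate, rewrite
\begin{equation*}
u(x,z)=\inf_{w\in\Re^m}\phi(x,z,w),\qquad \phi(x,z,w):=\tfrac12\|z-w\|^2+\delta_{\gph\S_i}(x,w),
\end{equation*}
so $u$ is the marginal function associated with $\phi$. The unique argmin is $w(x,z)=P_{\S_i(x)}(z)$, and the continuity established above supplies the inner semicontinuity needed to apply the Mordukhovich marginal-function rule (see Theorem~3.38 of \cite{Mordukhovich_2006}). This delivers the upper estimate
\begin{equation*}
\pmord u(x,z)\subseteq\bigl\{(x^\ast,z^\ast):(x^\ast,z^\ast,0)\in\pmord\phi(x,z,w(x,z))\bigr\}.
\end{equation*}

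It remains to compute $\pmord\phi$ and translate. Since the quadratic part of $\phi$ is $C^1$, the exact sum rule applies and
\begin{equation*}
\pmord\phi(x,z,w)=(0,z-w,w-z)+\nmord_{\gph\S_i\times\Re^m}(x,w,z),
\end{equation*}
where the latter normal cone equals $\{(x^\ast,w^\ast,0):(x^\ast,w^\ast)\in\nmord_{\gph\S_i}(x,w)\}$. Plugging $w=P_{\S_i(x)}(z)$ and requiring the third component to vanish forces $z^\ast=z-P_{\S_i(x)}(z)$ and $(x^\ast,z-P_{\S_i(x)}(z))\in\nmord_{\gph\S_i}(x,P_{\S_i(x)}(z))$. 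By the definition of the coderivative, this last inclusion is exactly $x^\ast\in D^\ast\S_i(x,P_{\S_i(x)}(z))(P_{\S_i(x)}(z)-z)$, which yields \eqref{eq:derivative}.

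The main technical obstacle is the verification that the projection map $(x,z)\mapsto P_{\S_i(x)}(z)$ is continuous (inner semicontinuous) on $U\times\Re^m$: this is what allows both the local Lipschitzness of $u$ and the application of the marginal-function rule. Once this single point is settled via the Aubin property together with closed graph and convex values, the rest of the proof is a straightforward assembly of the sum rule and the coderivative definition.
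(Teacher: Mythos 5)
Your proof is correct, and its second half --- the subdifferential estimate --- follows essentially the same route as the paper: both arguments recognize $u$ as a marginal function whose argminimum is $P_{\S_i(x)}(z)$ and invoke Mordukhovich's marginal-function rule \cite[Theorem 3.38]{Mordukhovich_2006}, the only cosmetic difference being that you fold the constraint into the objective as an indicator of $\gph \S_i$ while the paper keeps $\G(x,z)=\S_i(x)$ as an explicit constraint mapping; the resulting normal-cone computation and the identification $x^\ast\in D^\ast\S_i(x,P_{\S_i(x)}(z))(P_{\S_i(x)}(z)-z)$, $z^\ast=z-P_{\S_i(x)}(z)$ are identical. Where you genuinely diverge is the Lipschitz part: the paper splits into the cases $(x,z)\in\gph\S_i$ and $(x,z)\notin\gph\S_i$ and cites two external results (\cite[Theorem 1.41]{Mordukhovich_2006} and \cite[Corollary 5.5]{MR2179254}), whereas you give a single self-contained estimate $|\dst(z,\S_i(x))-\dst(z,\S_i(x'))|\leq\kappa\|x-x'\|$ valid both on and off the graph, which is more elementary and uniform. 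The one point that needs care is the continuity of $(x,z)\mapsto P_{\S_i(x)}(z)$, which you defer to ``a standard argument'': in the paper this continuity is established only \emph{after} the present lemma and its proof \emph{uses} the Lipschitz continuity of the distance function obtained here, so you cannot borrow that later lemma without circularity. You must prove the continuity independently --- boundedness of the projections via condition a) of \eqref{Assump_Setvalued}, cluster points lying in $\S_i(x_0)$ via closedness of the graph, upper semicontinuity of $x\mapsto\dst(z,\S_i(x))$ via the Aubin property at $(x_0,P_{\S_i(x_0)}(z_0))$, and uniqueness of the limit via convexity. That argument does go through, so there is no actual gap, but it should be written out rather than asserted.
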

	
\begin{proof}
First, let us check the local Lipschitz continuity of $u$ around $(x,z) \in U \times \mathbb{R}^m$. Indeed, on the one hand if $(x,z) \in \gph \S_i$ then by Assumption  \eqref{Assump_Setvalued} we have that $\S_i$ is locally Lipschitz-like around $(x,z)$, hence by \cite[Theorem 1.41]{Mordukhovich_2006}, the distance function $(x,z) \mapsto d(z, \S_i(x))$ is locally Lipschitz. The mapping $u$ is thus also locally Lipschitz around $(x,z)$ as the composition with a continuously differentiable map. 

On the other hand if  $(x,z) \notin \gph \S_i$, we can apply  \cite[Corollary 5.5]{MR2179254} to conclude that $u$ is locally Lipschitz around $(x,z)$.
  	 
Now, let us verify \eqref{eq:derivative}. The function $u$ can be rewritten as a marginal function in the following way:
		\begin{equation*}
		    u(x,z)=\inf\{\psi(x,z,y):y\in \G(x,z)\},
		\end{equation*}
		where $\psi(x,z,y):=\tfrac{1}{2}\Vert z-y\Vert^2$, $\G(x,z):=\S_i(x)$ and where the argminimum for $u$ is the single-valued mapping $M(x,z):=P_{\S_i(x)}(z)$. Since $\S_i$ satisfies \eqref{Assump_Setvalued} at $x$ and $\psi$ is continuously differentiable,  we may apply \cite[Theorem 3.38 i)]{Mordukhovich_2006} to obtain
		\begin{equation*}
		    \pmord u(x,z)\subseteq \bigcup_{(x^*,z^*,y^*)\in\pmord \psi(x,z,P_{\S_i(x)}(z))}(x^*,z^*)+D^*\G(x,z,P_{\S_i(x)}(z))(y^*).
		\end{equation*}
		Finally, the definition of $\G$ implies that $D^*\G(x,z,P_{\S_i(x)}(z))(y^*) = D^*\S_i(x, P_{\S_i(x)}(z))(y^*) \times \col{0}$. This combined with the fact that $\pmord \psi(x,z,P_{\S_i(x)}(z))=(0,z-P_{\S_i(x)}(z),P_{\S_i(x)}(z)-z)$ allow us to rewrite the above inclusion as exhibited in \eqref{eq:derivative}.
	\end{proof}
	The following result can be understood as an interior continuity lemma of the set-valued mappings $(\S_i)_{i=1}^s$ under Assumption \eqref{Assump_Setvalued}.
	\begin{lemma}\label{lemma:inte}
Let $\S_i: \H \tto \Re^m$ be a set valued-mapping satisfying \eqref{Assump_Setvalued} at $\bar x$. Then for every $x\in U$  and $z\in \inte( \S_i(x)) $ there exists $\gamma>0$ such that
		\begin{align*}
			\mathbb{B}_\gamma(z) \subseteq \S_i(x'), \text{ for all  } x'\in \mathbb{B}_\gamma(x).
		\end{align*}
	\end{lemma}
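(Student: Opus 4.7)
The plan is to exploit the convexity of each value $\S_i(x')$ together with the Aubin property to propagate a full-dimensional ball inside $\S_i(x)$ to nearby sections. Since $z\in\inte(\S_i(x))$, pick $r>0$ with $\mathbb{B}_r(z)\subseteq \S_i(x)$ and consider the $2m$ ``vertices'' $v_j^{\pm}:= z\pm r\,e_j$ for $j=1,\ldots,m$, where $e_1,\ldots,e_m$ is the canonical basis of $\Re^m$. A direct computation (or the support function of the cross-polytope) shows that $\mathbb{B}_{r/\sqrt{m}}(z)\subseteq \conv\{v_j^{\pm}\}\subseteq \S_i(x)$, so $z$ lies in a finitely generated polytope all of whose vertices sit in $\S_i(x)$.

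Next I would perturb each vertex via the Aubin property. Since $(x,v_j^{\pm})\in \gph\S_i$ and assumption \eqref{Assump_Setvalued}(b) applies, there exist constants $\kappa_j^{\pm}, \delta_j^{\pm}>0$ with $\dst(v_j^{\pm},\S_i(x'))\leq \kappa_j^{\pm}\|x'-x\|$ for all $x'\in\mathbb{B}_{\delta_j^{\pm}}(x)$. Set $\kappa:=\max_{j,\pm}\kappa_j^{\pm}$ and $\delta:=\min_{j,\pm}\delta_j^{\pm}$. Because $\gph\S_i$ is closed, every section $\S_i(x')$ is a closed subset of $\Re^m$, so the infimum defining the distance is attained and one may pick $w_j^{\pm}\in\S_i(x')$ with $\|w_j^{\pm}-v_j^{\pm}\|\leq \kappa\|x'-x\|$. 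Comparing the support functions of the two cross-polytopes, for every $u\in\Re^m$ with $\|u\|=1$ one obtains
\[
h_{\conv\{w_j^{\pm}\}}(u)=\max_{j,\pm}\langle u,w_j^{\pm}\rangle\geq \max_{j,\pm}\langle u,v_j^{\pm}\rangle-\kappa\|x'-x\|\geq \langle u,z\rangle+\frac{r}{\sqrt{m}}-\kappa\|x'-x\|,
\]
which, by the standard support-function characterisation of set inclusion between closed convex sets, yields $\mathbb{B}_{r/\sqrt{m}-\kappa\|x'-x\|}(z)\subseteq \conv\{w_j^{\pm}\}$.

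To conclude, convexity of $\S_i(x')$ (assumption \eqref{Assump_Setvalued}(c)) together with $w_j^{\pm}\in\S_i(x')$ gives $\conv\{w_j^{\pm}\}\subseteq \S_i(x')$. Setting $\gamma:=\min\{\delta,\,r/(2\kappa\sqrt{m}),\,r/(2\sqrt{m})\}$, any $x'\in\mathbb{B}_\gamma(x)$ satisfies $\|x'-x\|\leq \gamma$, hence $r/\sqrt{m}-\kappa\|x'-x\|\geq r/(2\sqrt{m})\geq \gamma$, and therefore $\mathbb{B}_\gamma(z)\subseteq \S_i(x')$, as required. The one genuinely delicate step is the passage from the Aubin-type distance bound to actual membership in $\S_i(x')$: the Aubin property only produces nearby points of the section, not inclusion, so one truly needs to combine several such perturbations using the convex-valuedness of $\S_i$ and the full-dimensionality of $\mathbb{B}_r(z)\subseteq\S_i(x)$. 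The support-function inequality above is the cleanest way to carry out this upgrade and to obtain an explicit formula for $\gamma$.
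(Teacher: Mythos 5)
Your proof is correct, and it reaches the conclusion by a genuinely different mechanism than the paper's. The paper applies the Aubin property in its set-inclusion form to the whole ball, obtaining $\mathbb{B}_\epsilon(z)\subseteq \S_i(x')+L\|x'-x\|\mathbb{B}$ for $x'$ near $x$, then writes $\mathbb{B}_\epsilon(z)=\mathbb{B}_{\epsilon/2}(z)+\mathbb{B}_{\epsilon/2}(0)$ and uses additivity of support functions under Minkowski sums to cancel the slack term $\sigma_{\mathbb{B}_{\epsilon/2}(0)}$ from both sides, leaving $\sigma_{\mathbb{B}_{\epsilon/2}(z)}\leq\sigma_{\S_i(x')}$ and hence the inclusion via \cite[Theorem 2.4.14 (vi)]{Zalinescu_2002} and convexity of the values. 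You instead invoke the pointwise distance estimate only at the $2m$ vertices of a cross-polytope inscribed in $\mathbb{B}_r(z)$, select nearest points $w_j^{\pm}\in\S_i(x')$ (legitimate, since closedness of $\gph\S_i$ makes each section closed and nonempty by \eqref{Assump_Setvalued}(a)), and compare support functions of the two polytopes before letting convexity of $\S_i(x')$ absorb $\conv\{w_j^{\pm}\}$. Both arguments rest on exactly the same two ingredients, the Aubin property and the support-function characterization of inclusion between closed convex sets, and you correctly flag the crux: the distance estimate alone gives nearby points, not membership, and the upgrade must pass through convexity plus full-dimensionality. The paper's cancellation trick is shorter and loses no radius (it keeps $\epsilon/2$), whereas your polytope route costs a factor $\sqrt{m}$ but only uses the Aubin property at finitely many points and produces an explicit formula for $\gamma$; both are fully rigorous.
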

	
	\begin{proof}
Fix $i \in \{1, \ldots,  s\}$, and let $x\in U$  and $z\in \inte( \S_i(x)) $. By the local Lipschitz-like property of $\S_i$ around $(x,z)$ there exist $\epsilon>0$ and $L>0$ such that
    \begin{equation*}
        \S_i(x)\cap \mathbb{B}_\epsilon(z)\subseteq \S_i(x')+L\|x'-x\|\mathbb{B},\,\text{ for all }x'\in \mathbb{B}_{\epsilon}(x).
    \end{equation*}
    Now, by considering $\epsilon>0$ small enough such that $\mathbb{B}_{\epsilon}(z)\subset \S_i(x)$ we may rewrite the above inclusion as
    \begin{equation*}
     \mathbb{B}_\epsilon(z)\subseteq \S_i(x')+L\|x'-x\|\mathbb{B},\,\text{ for all }x'\in \mathbb{B}_{\epsilon}(x).
    \end{equation*}
    Hence, taking $\eta\in (0,\epsilon)$ such that $L\eta<\frac{\epsilon}{2}$ we have that
    \begin{equation}\label{eq:lemma:inte}
     \mathbb{B}_\epsilon(z)\subseteq \S_i(x')+\mathbb{B}_{\frac{\epsilon}{2}}(0),\,\text{ for all }x'\in \mathbb{B}_{\eta}(x).
    \end{equation}
    Let us fix $x'\in \mathbb{B}_{\eta}(x)$. Applying the support function to both sides of \eqref{eq:lemma:inte} and since $\mathbb{B}_\epsilon(z)=\mathbb{B}_{\frac{\epsilon}{2}}(z)+\mathbb{B}_{\frac{\epsilon}{2}}(0)$ we have that for all $h \in \mathbb{R}^m$
    \begin{align*}
     \sigma_{\mathbb{B}_{\frac{\epsilon}{2}}(z)}(h)+\sigma_{\mathbb{B}_{\frac{\epsilon}{2}}(0)}(h) =    \sigma_{\mathbb{B}_\epsilon(z)}(h)&\leq \sigma_{\S_i(x')+\mathbb{B}_{\frac{\epsilon}{2}}(0)}(h),
     \end{align*}
     which therefore implies that 
       \begin{align*}
        \sigma_{\mathbb{B}_{\frac{\epsilon}{2}}(z)}(h)&\leq\sigma_{\S_i(x')}(h)\,\text{ for all }h\in \Re^m.
    \end{align*}
    Due to \cite[Theorem 2.4.14 (vi)]{Zalinescu_2002} together with the convexity of $\S_i(x')$ we obtain that $\mathbb{B}_{\frac{\epsilon}{2}}(z)\subseteq\S_i(x')$. Finally by letting $\gamma=\min\{\eta,\frac{\epsilon}{2}\}$ we conclude the proof.
	\end{proof}
	The next lemma shows that Assumption \eqref{Assump_Setvalued} is sufficient to ensure the continuity of the mapping $(x,z)\mapsto  P_{\S_i(x)}(z)$.
\begin{lemma}
Under Assumption \eqref{Assump_Setvalued} the mappings $(x,z)\mapsto  P_{\S_i(x)}(z)$ are continuous on $U \times \mathbb{R}^m$ for all $i=1,\ldots,s$.
	\end{lemma}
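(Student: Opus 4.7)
The plan is to prove continuity by a standard compactness-plus-uniqueness argument. Fix $i\in\{1,\ldots,s\}$ and let $(x_k,z_k)\to (x,z)$ with $(x_k)$ eventually in $U$ and $x\in U$. Write $y_k:=P_{\S_i(x_k)}(z_k)$, which is well-defined and single-valued because $\S_i(x_k)$ is nonempty (by \eqref{Assump_Setvalued}~a)), closed and convex (by \eqref{Assump_Setvalued}~c)). The goal is to show $y_k\to P_{\S_i(x)}(z)$.

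First I would establish boundedness of $(y_k)$. Since $0\in \S_i(x_k)$ by \eqref{Assump_Setvalued}~a), one has $\dst(z_k,\S_i(x_k))\leq \|z_k\|$, hence $\|y_k-z_k\|\leq \|z_k\|$ and so $\|y_k\|\leq 2\|z_k\|$. The sequence $(z_k)$ being convergent in $\Re^m$ is bounded, and therefore so is $(y_k)$.

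Next, I would extract from any subsequence of $(y_k)$ a further subsequence $y_{k_j}\to y^\ast\in\Re^m$ (possible since $\Re^m$ is finite-dimensional) and identify its limit. Two facts are needed:
\begin{enumerate}[label=(\roman*)]
    \item $y^\ast \in \S_i(x)$: this follows because $(x_{k_j},y_{k_j})\in\gph\S_i$, $(x_{k_j},y_{k_j})\to (x,y^\ast)$, and $\gph\S_i$ is closed by \eqref{Assump_Setvalued}~c).
    \item $\|y^\ast - z\|=\dst(z,\S_i(x))$: this follows from $\|y_{k_j}-z_{k_j}\|=\dst(z_{k_j},\S_i(x_{k_j}))$ by passing to the limit, provided the distance function $(x',z')\mapsto \dst(z',\S_i(x'))$ is continuous on $U\times \Re^m$. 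But this continuity is a direct consequence of Lemma \ref{lemma:derivative}: the mapping $u(x',z')=\tfrac{1}{2}\dst^2(z',\S_i(x'))$ is locally Lipschitz on $U\times\Re^m$, hence continuous, and therefore so is $\sqrt{2u}=\dst(\cdot,\S_i(\cdot))$.
\end{enumerate}
Combining (i) and (ii) with the uniqueness of the projection onto the closed convex set $\S_i(x)$ yields $y^\ast = P_{\S_i(x)}(z)$.

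Finally, a standard subsequence principle closes the argument: every subsequence of the bounded sequence $(y_k)$ admits a further subsequence converging to the same limit $P_{\S_i(x)}(z)$, so the whole sequence $(y_k)$ converges to $P_{\S_i(x)}(z)$, proving continuity of $(x,z)\mapsto P_{\S_i(x)}(z)$ at the arbitrary point $(x,z)\in U\times\Re^m$. The only subtle point is to justify continuity of the distance function uniformly in both arguments, and this is precisely what Lemma \ref{lemma:derivative} delivers for free; otherwise the proof is a routine compactness argument that does not require any deeper use of Assumption \eqref{Assump_Setvalued} beyond what has already been exploited.
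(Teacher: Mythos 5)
Your proof is correct and follows essentially the same route as the paper's: boundedness of the projections, extraction of cluster points, identification of the limit via closedness of $\gph \S_i$, continuity of the distance function from Lemma \ref{lemma:derivative}, and uniqueness of the projection onto a closed convex set. The only cosmetic difference is that the paper first splits off the $z$-perturbation using nonexpansiveness of the projection and runs the cluster-point argument on $P_{\S_i(x_k)}(z)$ alone, whereas you treat the joint sequence $P_{\S_i(x_k)}(z_k)$ directly and get boundedness from $0\in\S_i(x_k)$; both are equally valid.
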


\begin{proof}
Fix $i \in \{1, \ldots, s\}$. Consider a sequence $(x_k,z_k) \to  (x,z )\in  U\times \mathbb{R}^m$. We have that 
	\begin{align*}
		\|  P_{\S_i(x_k)}(z_k) -  P_{\S_i(x)}(z)\| & \leq \|  P_{\S_i(x_k)}(z_k) -  P_{\S_i(x_k)}(z)\| + \|  P_{\S_i(x_k)}(z) -  P_{\S_i(x)}(z)\|\\
	&	\leq  \| z_k -z\| +  \|  P_{\S_i(x_k)}(z) -  P_{\S_i(x)}(z)\|,
	\end{align*}
where in the second inequality, we used the nonexpansiveness of the projection mapping (see, e.g., \cite[p 118]{Urruty_Lemarechal_1996a}). Now, define $y_k:= P_{\S_i(x_k)}(z)$. The sequence $y_k$ can be assumed bounded since $\norm{y_k - z} = \dst(z, \S_i(x_k))$ and the distance function is continuous as a result of Lemma \ref{lemma:derivative}. Hence, it is enough to show that each cluster point of $(y_k)$ is equal to $P_{\S_i(x)}(z)$. Indeed, let $y_{k_l} \to y$.  By closedness of the graph of $\S_i$ we have that $y \in \S_i(x)$. Furthermore, by definition of projection we have that $\|y_{k_l} - z\| = \dst( z, \S_i(x_{k_l}))$ which by continuity of the distance function (Lemma \ref{lemma:derivative}) yields $ \| y - z\| = d(z,\S_i(x))$. Finally, from the uniqueness of the projection onto convex sets we conclude that $y = P_{\S_i(x)}(z)$, and that ends the proof.
\end{proof}

Next, we show that under the Assumption \eqref{Assump_Setvalued} it is possible to find a uniform  lower bound  (with respect to z) for the inner product between $z- P_{\S_i(x)}(z) $ and $z$ around any $x \in U$ in terms of the distance between $z$ and $\S_i(x)$.

	\begin{lemma}\label{lemma:cont:project}
	Consider $\S_i$ satisfying Assumption \eqref{Assump_Setvalued} and suppose that $0 \in \inte(\mathcal{S}_i(\bar{x}))$. Then there exists a neighbourhood of $U'$ of $\bar x$ and $r>0$ such that for all $x\in U'$ we have that
	\begin{align}\label{lemma:cont:project_eq01}
		\langle z- P_{\S_i(x)}(z) 	,z \rangle \geq r \dst (z, \S_i(x)), \text{ for all }(x,z)\in U'\times\mathbb{R}^m.
	\end{align}
\end{lemma}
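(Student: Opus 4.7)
The plan is to combine the interior continuity result (Lemma \ref{lemma:inte}) with the classical variational characterization of the metric projection onto a closed convex set. Since $0 \in \inte(\S_i(\bar x))$, Lemma \ref{lemma:inte} supplies a $\gamma > 0$ and a neighborhood $U' := \mathbb{B}_\gamma(\bar x)$ such that $\mathbb{B}_\gamma(0) \subseteq \S_i(x)$ for every $x \in U'$. This uniform inner ball around the origin will provide the lower bound $r$.

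Fix $x \in U'$ and $z \in \Re^m$, and write $p := P_{\S_i(x)}(z)$. If $z \in \S_i(x)$, then $p = z$ and both sides of \eqref{lemma:cont:project_eq01} vanish, so the inequality holds trivially. Otherwise, exploit the fact that $\S_i(x)$ is closed and convex (Assumption \eqref{Assump_Setvalued} c)): the projection $p$ is characterized by
\begin{equation*}
\langle z - p, w - p \rangle \leq 0, \qquad \forall w \in \S_i(x),
\end{equation*}
and in particular for every $w \in \mathbb{B}_\gamma(0) \subseteq \S_i(x)$. Rearranging this gives $\langle z - p, w \rangle \leq \langle z - p, p \rangle$, and taking the supremum over $w \in \mathbb{B}_\gamma(0)$ yields
\begin{equation*}
\gamma \, \|z - p\| \;\leq\; \langle z - p, p \rangle.
\end{equation*}

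Finally, decompose $z = (z - p) + p$ to obtain
\begin{equation*}
\langle z - p, z \rangle \;=\; \|z - p\|^2 + \langle z - p, p \rangle \;\geq\; \langle z - p, p \rangle \;\geq\; \gamma \, \|z - p\| \;=\; \gamma \, \dst(z, \S_i(x)),
\end{equation*}
so that $r := \gamma$ does the job. There is no real obstacle here: once Lemma \ref{lemma:inte} is invoked to produce the uniform inner ball, the rest is the standard one-line convex projection argument. The only subtle point worth being explicit about is the trivial case $z \in \S_i(x)$, which must be separated because the projection characterization is usually written for $z \notin \S_i(x)$.
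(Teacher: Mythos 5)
Your proof is correct and takes essentially the same approach as the paper: both invoke Lemma \ref{lemma:inte} at $0\in\inte(\S_i(\bar x))$ to obtain a uniform ball $\mathbb{B}_\gamma(0)\subseteq\S_i(x)$ on a neighbourhood, dispose of the trivial case $z\in\S_i(x)$, and then combine the variational characterization of the convex projection with the decomposition $\langle z-p,z\rangle=\|z-p\|^2+\langle z-p,p\rangle$. The only cosmetic difference is that you take a supremum over $w\in\mathbb{B}_\gamma(0)$ where the paper explicitly exhibits the maximizer $y=r\frac{z-p}{\|z-p\|}$.
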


\begin{proof}
By Lemma \ref{lemma:inte} we can consider a neighbourhood $U'$ of $\bar{x}$ and $r>0$ be such that $r\mathbb{B} \subseteq \S_i(x)$  for all  $x \in U'$. On the one hand if $z\in \S_i(x)$ the inequality holds trivially. Let us now define $y:=r\frac{z -  P_{\S_i(x)}(z)}{ \| z -  P_{\S_i(x)}(z)\| }$ and observe $y \in r\mathbb{B} \subseteq \S_i(x)$. Then, on the other hand, for that $y$:
	\begin{align*}
		\langle 	z- P_{\S_i(x)}(z) ,z \rangle &= 	\langle 	z- P_{\S_i(x)}(z) ,z -P_{\S_i(x)}(z)  \rangle  +\langle 	z- P_{\S_i(x)}(z) , P_{\S_i(x)}(z)  \rangle \\
		&= 	\| 	z- P_{\S_i(x)}(z) \|^2  +\langle 	z- P_{\S_i(x)}(z) , P_{\S_i(x)}(z) -y \rangle + \langle 	z- P_{\S_i(x)}(z) ,  y  \rangle
		\\
		&= 	\| 	z- P_{\S_i(x)}(z) \|^2  +\langle 	z- P_{\S_i(x)}(z) , P_{\S_i(x)}(z) -y\rangle + r	\| 	z- P_{\S_i(x)}(z) \| .
		\\
		%	&\geq r d(z,\S_i(x)).
	\end{align*}
	We notice that  $\langle 	z- P_{\S_i(x)}(z) , P_{\S_i(x)}(z) -w \rangle \geq 0$ for any $w \in \S_i(x)$ by definition of the projection onto convex sets. As a result, it follows that
	\begin{align*}
		\langle z- P_{\S_i(x)}(z) ,z\rangle &\geq  r	\| 	z- P_{\S_i(x)}(z)\|=  r\dst(z,\S_i(x)),
	\end{align*}
	thus concluding the proof.
\end{proof}
To end this subsection, let us show that the non-emptiness of the interior of the constraint set $\mathcal{S}_i(x)$ implies the nontriviality of the normal cone and a``transversality" condition, the importance of which will become clear later.
%$\langle v^\ast, z\rangle >0  $ for %all $v^\ast \in \nmord_{S_i(x)}(z)  $ with $v^\ast \neq 0$.

\begin{lemma}\label{Transversal_Directions}
Under the assumptions of Lemma \ref{lemma:cont:project}, there exists a neighbourhood of $U'$ of $\bar x$ and $r>0$ such that for all $x\in U'$ and all $z \in \bd (\S_i(x))$,  $\nmord_{\S_i(x)}(z) \cap \sph \neq \emptyset$ and $\langle v^\ast, z\rangle >r $ for all $v^\ast \in \nmord_{\S_i(x)}(z) \cap \sph$.
\end{lemma}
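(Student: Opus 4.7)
The plan is to combine the interior-continuity result (Lemma \ref{lemma:inte}) with the supporting hyperplane theorem for convex sets. Since $0\in \inte(\S_i(\bar x))$, Lemma \ref{lemma:inte} furnishes a neighborhood $U'$ of $\bar x$ and a radius $r_0>0$ such that $r_0\mathbb{B}\subseteq \S_i(x)$ for every $x\in U'$; by slightly shrinking $r_0$ we may even arrange $r_0\mathbb{B}\subseteq \inte \S_i(x)$. I will then choose $r\in(0,r_0)$ and verify the two conclusions of the lemma in turn, which is enough since both Lemma \ref{lemma:cont:project} and the current statement allow us to pick the constants (so aligning the two constants by replacing both by the smaller one is harmless).

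For the non-emptiness assertion, fix $x\in U'$ and $z\in \bd \S_i(x)$. The set $\S_i(x)$ is closed and convex with nonempty interior, so the Hahn--Banach separation theorem produces a nonzero $v^\ast\in\Re^m$ with $\langle v^\ast,w-z\rangle\leq 0$ for every $w\in \S_i(x)$. Because $\S_i(x)$ is convex, the limiting normal cone $\nmord_{\S_i(x)}(z)$ coincides with the normal cone of convex analysis, hence $v^\ast \in \nmord_{\S_i(x)}(z)$, and after normalization $v^\ast/\|v^\ast\|$ lies in $\nmord_{\S_i(x)}(z)\cap \sph$.

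For the quantitative estimate, let $v^\ast\in \nmord_{\S_i(x)}(z)\cap \sph$. The supporting-hyperplane characterization gives $\langle v^\ast,w\rangle\leq \langle v^\ast,z\rangle$ for every $w\in \S_i(x)$; specializing to $w=r_0 v^\ast\in r_0\mathbb{B}\subseteq \S_i(x)$ yields $r_0=r_0\|v^\ast\|^2\leq \langle v^\ast,z\rangle$. Since $r<r_0$, this produces the strict bound $\langle v^\ast,z\rangle\geq r_0>r$, as required. I do not foresee a real obstacle: the only delicate point is guaranteeing the strict inequality $>r$ rather than $\geq r$, and that is handled precisely by inserting the buffer $r<r_0$ between the ball radius supplied by Lemma \ref{lemma:inte} and the constant that appears in the conclusion.
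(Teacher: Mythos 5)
Your proof is correct, but it takes a genuinely different route from the paper's. The paper derives both conclusions from machinery it has already set up: non-triviality of $\nmord_{\S_i(x)}(z)$ at boundary points comes from \cite[Corollary 2.24]{Mordukhovich_2006} (which needs no convexity), and the lower bound $\langle v^\ast, z\rangle > r$ is obtained by combining the projection inequality of Lemma \ref{lemma:cont:project} with the finite-dimensional characterization of limiting normals as limits of normalized vectors $(z_k-P_{\S_i(x)}(z_k))/\dst(z_k,\S_i(x))$ from \cite[Theorem 1.6]{Mordukhovich_2006}. You instead exploit convexity directly: Lemma \ref{lemma:inte} gives an inscribed ball $r_0\mathbb{B}\subseteq\S_i(x)$ uniformly in $x\in U'$, the supporting-hyperplane theorem (together with the identification of $\nmord_{\S_i(x)}(z)$ with the convex normal cone) gives non-emptiness, and testing the support inequality at $w=r_0 v^\ast$ yields $\langle v^\ast,z\rangle\geq r_0>r$. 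Your argument is more elementary and self-contained, and it cleanly delivers the \emph{strict} inequality by inserting the buffer $r<r_0$ --- a point the paper's limit argument glosses over, since passing to the limit in $\langle v^\ast,z_k\rangle\geq r$ only yields $\geq r$ as written. What the paper's route buys is uniformity of method: the non-triviality step does not use convexity at all, and the quantitative step reuses Lemma \ref{lemma:cont:project} verbatim, which is also needed elsewhere (e.g., in Lemma \ref{bounded:lemma}). Both proofs are valid under the stated hypotheses.
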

\begin{proof}
Using \cite[Corollary 2.24]{Mordukhovich_2006} we have that $\nmord_{\S_i(x)}(z) \cap \sph\neq \emptyset$  for every $z \in \bd (\S_i(x))$. Now, using Lemma \ref{lemma:cont:project} let us consider $U'$ of $\bar x$ and $r>0$ such that \eqref{lemma:cont:project_eq01} holds.   Now, let us consider $ v^\ast \in \nmord_{\S_i(x)}(z) \cap \sph$, so using the finite-dimensional characterization of normal vectors given in \cite[Theorem 1.6]{Mordukhovich_2006}, we can find a sequence $z_k \to z$  such that $(z_k-P_{\S_i(x)}(z_k))/\dst (z_k, \S_i(x)) \to v^\ast $. Hence,    \eqref{lemma:cont:project_eq01} implies that $ \langle v^\ast, z\rangle >r $ , which ends the proof. 
\end{proof}

\subsection{Spherical radial decomposition for set-valued inclusion}
	In this section, we present a spherical radial decomposition for the probability function $\varphi$ given in \eqref{P00}. First, let us suppose that $\xi$  has a density called $f_{\xi}$ with respect to the $m$-dimensional Lebesgue measure $\lambda_m$. Using  this fact  it is clear that $\phi$ can also be
	represented as follows:
	\begin{equation}\label{P01}
		\varphi(x)= \int\limits_{\{ z  \in \Re^m\; :\; z\in \S_i(x)\; \forall i=1,\ldots,s \}   }
		f_\xi (z)  d \lambda_m(z).
	\end{equation}
	Moreover, throughout this
	work we will make the assumption that $f_{\xi}$ is bounded on
	compact sets, i.e.,
	\begin{align}\label{eq:boundedassump:set}
		f_\xi \in  L^\infty(K), \text{ for  every compact set  } K
		\subseteq \Re^{m}. %\backslash \{\bar{z} \}
	\end{align}
	It has been shown in \cite{vanAckooij_Perez-Aros_2022} that  \eqref{P01} can be represented as 
	\begin{equation*}
	%\label{probabilityinlikesphericalradial}
		\varphi(x)= \int\limits_{\mathbb{S}^{m-1}} e(x,v)  d
		\mu_\zeta(v),
	\end{equation*}
	where $e: \H \times \mathbb{S}^{m-1} \to \mathbb{R}\cup
	\{ \infty\}$ is the radial ``probability-like" function given
	by
	\begin{align}\label{definition_probability-like_e}
		e(x,v) =  \frac{2 \pi^{\frac{m}{2} } |\det(L)|}{ \Gamma(
			\frac{m}{2} )     } \displaystyle  \int\limits_{  \{ r\geq 0\; :\;
			 rLv  \in \S_i(x)\; \forall i=1,\ldots,s      \}      }   r^{m-1}f_\xi (rLv)
		dr.
	\end{align}
	
	In order to simplify the notation let us define the
	``density-like" function $\theta$:
	\begin{align}\label{transformationftorho}
		\theta(r,v):=   \frac{2\pi^{\frac{m}{2}} |\textnormal{det}(L)|
		}{\Gamma(\frac{m}{2})}  r^{m-1}f_\xi (rLv).
	\end{align}
	
Following \cite{vanAckooij_Perez-Aros_2022}, we define the set-valued mapping $I_\theta : \Re_+ \times
	\sph \tto \Re_+$ by
	\begin{equation}\label{defin_Itheta}
		I_{\theta}(r,v) := [\underline{\theta}(r,v),\overline{\theta}_+(r,v)]
		\cup[\underline{\theta}^-(r,v),\overline{\theta}(r,v) ].
	\end{equation}
	where
	\begin{equation}\label{defin_thetafunctions}
		\begin{aligned}
			\overline{\theta}(r,v)&:=\inf\left\{ k  > 0 : \exists \epsilon >0 \text{ such  that } \theta(u,v) \leq k \text{ a.e. for }  u \in  [r-\epsilon,r+\epsilon]           \right\},\\
			{\bar{\theta}}_{+}(r,v)&:=\inf\{ k  > 0 : \exists \epsilon >0 \text{ such  that } \theta(u,v) \leq k \text{ a.e. for }  u \in  [r,r+\epsilon]           \},\\
			\underline{\theta}(r,v)&:=\sup\{ k  > 0 : \exists \epsilon >0 \text{ such  that } \theta(u,v) \geq k \text{ a.e. for }  u \in  [r-\epsilon,r+\epsilon]          \},\\
			\underline{\theta}^{-}(r,v)&:=\sup\{ k  > 0 : \exists \epsilon >0 \text{ such  that } \theta(u,v) \geq k \text{ a.e. for }  u \in  [r-\epsilon,r]           \}.\\
		\end{aligned}
	\end{equation}

\section{Inner enlargement of the probability  function}
\label{section:inner_enlargement}
	In order to understand the first order variations of the probability function \eqref{P00} we will consider an inner enlargement of the constraint set where the probability is taken. Formally,  given $\epsilon\geq 0$ we define the probability function 
	\begin{align}\label{P01eps} 
		\varphi_\epsilon(x)&:=\mathbb{P} (\omega \in \Omega:  \xi(\omega) \in \S_i(x)+  \epsilon \mathbb{B} \text{ for all }i=1,\ldots,s).
		%  \\&=\mathbb{P} (\omega \in \Omega: \dst(\xi(\omega), \S(x)) \leq \epsilon).
	\end{align}
	Here, it is important to notice that $z \in \S_i(x)+  \epsilon \mathbb{B}$ if and only if $\dst(z, \S_i(x)) \leq \epsilon$. Therefore,  the probability function \eqref{P01eps} can be rewritten using the distance function as
	\begin{align*}
		\varphi_\epsilon(x)=\mathbb{P} (\omega \in \Omega: \dst(\xi(\omega), \S_i(x)) \leq \epsilon \text{ for all }i=1,\ldots,s).
	\end{align*}

	%In \eqref{P01eps} we have used $\frac 12 \dst^{2}(z, \S(x))$ instead of the simple distance function in order to have simpler calculus rules.
	
	Now, consider a point $\bar{x}$ where Assumption \eqref{Assump_Setvalued} holds, and let $U$ be the neighbourhood, where this assumption is satisfied.  We define on $U$  the sets of 
	finite and infinite directions with respect to $\S_i$ as
	the sets defined by
	\begin{align}
		\mathcal{F}_i(x) &:=\{v\in\mathbb{S}^{m-1}\ |\ \exists r> 0: \dst(rLv,\S_i(x))>0\}, 
		\label{finite_directions_Si}\\
		\mathcal{I}_i(x) &:=\{v\in \mathbb{S}^{m-1}\ |\ \forall r\geq 0: \dst(rLv,\S_i(x))=0\},
		\label{infinite_directions_Si}
	\end{align}%
	respectively. We also consider 
	\begin{align}
		\mathcal{F}(x) &:=\{v\in \mathbb{S}^{m-1}\ |\ \exists r> 0: \max_{1\leq i\leq s}\dst(rLv,\S_i(x))>0\}, 
		\label{finite_directions_S}\\
		\mathcal{I}(x) &:=\{v\in \mathbb{S}^{m-1}\ |\ \forall r\geq 0: \max_{1\leq i\leq s}\dst(rLv,\S_i(x))=0\}.
		\label{infinite_directions_S}
	\end{align}%

Let us introduce the radial functions associated with the spherical radial decomposition of our enlargement \eqref{P01eps}. 	Given $\epsilon\geq 0$ we define $\rho^\epsilon_i  : U \times \mathbb{S}^{m-1} \to \Re\cup\{ + \infty \} $   by
\begin{align}
	\label{radial_function_enlargement_i}
		\rho^\epsilon_i \left( x,v\right) &:=\sup\left\{  r>0 : \dst(rLv,\S_i(x))\leq \epsilon \right\}%\\
		%	\rho \left( x,v\right) &:=\sup\left\{  r>0 : d(x,rLv + \bar{z})\leq 0 \right\}
	\end{align}
and set $\rho_\epsilon(x,v)=\min_{1\leq i\leq s}\rho^\epsilon_i(x,v)$. Particularly, we simply denote $\rho_i \left( x,v\right) :=\rho^0_i \left( x,v\right)$ and $\rho \left( x,v\right) :=\rho_0 \left( x,v\right)$. Now, let us establish some basic properties and relations of the radial functions, which allow us a better understanding of the behaviour of these mappings.
	
\begin{lemma}\label{domain:rho}
Let each $\S_i$ of the of the family of set-valued mappings satisfy \eqref{Assump_Setvalued} at $\bar x$ and let $U$ be the common neighbourhood. Then, we have that:
		
\begin{enumerate}[label=\alph*)]
\item  For all $\epsilon \geq 0$ and all $x\in U$, $\mathcal{F}_i(x)= \dom{\rho^\epsilon_i(x,\cdot)} $ and 
			$\mathcal{F}(x)=\dom{\rho_\epsilon(x,\cdot)}$.
			\item For all $v\in  \mathcal{F}_i(x)$, 
			\begin{align}\label{increasing}
				\dst(r_1Lv, \S_i(x)) <	\dst(r_2Lv,\S_i(x)), \text{ for all } r_2>r_1 >\rho_i(x,v).
			\end{align}
			\item  For all $v\in  \mathcal{F}_i(x)$, $\lim\limits_{r\to \infty}\dst(rLv,\S_i(x)) =+\infty$.
			\item For all $v\in  \mathcal{F}_i(x)$ and all   $r>\rho^\epsilon_i(x,v)$ we have  $\dst(r Lv,\S_i(x))>\epsilon$. 
			\item For all $\epsilon\geq 0$, we have $\rho^\epsilon_i(x,v) =\inf\{ r>0 : \dst(r Lv,\S_i(x))>\epsilon  \}$ with the convention $\inf \emptyset =+\infty$.
			\item  For all $v\in  \mathcal{F}_i(x)$, and all $\epsilon>0$,  $\rho^\epsilon_i(x,v)$ is the unique $r>0$ such that $\dst(r Lv,  \S_i(x))=\epsilon$.
		\end{enumerate}	  
\end{lemma}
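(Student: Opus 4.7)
My plan is to prove the six items in an order driven by dependencies rather than the order listed: first establish the strict radial monotonicity in (b), then use it to get (c), then derive (a) as an easy consequence, and finally handle (d)--(f) which are essentially bookkeeping around the definition of the supremum once continuity of $r\mapsto \dst(rLv,\S_i(x))$ is in hand.

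The key geometric fact that unlocks everything is a radial dilation estimate coming from \eqref{Assump_Setvalued}~(a,c): since $0\in \S_i(x)$ and $\S_i(x)$ is convex, for $0<r_1<r_2$ and any $y\in \S_i(x)$ the point $\tfrac{r_1}{r_2} y=\tfrac{r_1}{r_2}y+(1-\tfrac{r_1}{r_2})\cdot 0$ also lies in $\S_i(x)$. Taking $y=P_{\S_i(x)}(r_2 Lv)$ yields
\begin{equation*}
\dst(r_1 Lv,\S_i(x))\leq \tfrac{r_1}{r_2}\dst(r_2 Lv,\S_i(x)),
\end{equation*}
equivalently $\dst(r_2 Lv,\S_i(x))\geq \tfrac{r_2}{r_1}\dst(r_1 Lv,\S_i(x))$. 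For (b), pick any $r_1>\rho_i(x,v)$; by definition of $\rho_i$, the quantity $\dst(r_1 Lv,\S_i(x))$ is strictly positive, so the displayed inequality gives strict increase for $r_2>r_1$. For (c), fix such an $r_1$ and let $r_2\to\infty$; the factor $r_2/r_1\to\infty$ forces $\dst(r_2 Lv,\S_i(x))\to +\infty$.

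For (a), if $v\in \mathcal{F}_i(x)$ then (c) shows the set $\{r>0:\dst(rLv,\S_i(x))\leq\epsilon\}$ is bounded, so $\rho^\epsilon_i(x,v)<\infty$. Conversely, if $v\in \mathcal{I}_i(x)$ then the same set equals $(0,\infty)$ and the sup is $+\infty$. The identity $\mathcal{F}(x)=\bigcup_i \mathcal{F}_i(x)$ and the definition $\rho_\epsilon=\min_i \rho^\epsilon_i$ immediately transfer this to $\rho_\epsilon$. For (d), any $r>\rho^\epsilon_i(x,v)\geq 0$ fails to belong to the defining set of the sup, and since $r>0$ the only possibility is $\dst(rLv,\S_i(x))>\epsilon$.

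For (e), call $\tilde\rho(x,v):=\inf\{r>0:\dst(rLv,\S_i(x))>\epsilon\}$. The inclusion $\tilde\rho\leq \rho^\epsilon_i$ is immediate from (d). For the reverse inequality, note that $r\mapsto \dst(rLv,\S_i(x))$ is continuous in $r$ (the distance function is 1-Lipschitz in its first argument), so if $r<\tilde\rho(x,v)$ we must have $\dst(rLv,\S_i(x))\leq \epsilon$, hence $r\leq \rho^\epsilon_i(x,v)$; letting $r\nearrow \tilde\rho(x,v)$ yields $\tilde\rho\leq\rho^\epsilon_i$. Finally, for (f) with $\epsilon>0$, continuity together with the definition of the sup gives $\dst(\rho^\epsilon_i(x,v)Lv,\S_i(x))\leq\epsilon$, while (d) and continuity from the right give the reverse inequality; closedness of $\S_i(x)$ ensures $\dst(\rho_i(x,v)Lv,\S_i(x))=0<\epsilon$, so $\rho^\epsilon_i(x,v)>\rho_i(x,v)$, and the strict monotonicity from (b) forces uniqueness of the value where distance equals $\epsilon$.

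The only genuinely delicate step is (b); everything else is elementary once radial strict monotonicity and continuity of $r\mapsto\dst(rLv,\S_i(x))$ are available. I should be careful in (f) to invoke the closedness of $\S_i(x)$ (a consequence of $\gph\S_i$ being closed) to justify that $\rho_i(x,v)Lv\in \S_i(x)$ and hence that $\rho^\epsilon_i(x,v)$ sits strictly past $\rho_i(x,v)$, so that strict monotonicity from (b) actually applies at the point where the distance hits $\epsilon$.
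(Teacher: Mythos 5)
Your argument is correct in its main thrust but reaches the key item (b) by a genuinely different route. The paper introduces the auxiliary function $\gamma_{r,\beta}(t)=\dst((t+r)Lv,\S_i(x))-\beta$, uses its convexity together with $\gamma_{r_0,\beta}(0)<0$ at $r_0=\rho_i(x,v)$, and extracts a secant inequality that yields both the strict monotonicity \eqref{increasing} and, by letting $r_2\to\infty$, item (c). You instead exploit the star-shapedness of $\S_i(x)$ with respect to the origin (Assumption \eqref{Assump_Setvalued}(a) plus convex-valuedness): rescaling the projection of $r_2Lv$ gives the multiplicative bound $\dst(r_1Lv,\S_i(x))\leq \tfrac{r_1}{r_2}\dst(r_2Lv,\S_i(x))$, which delivers (b) and (c) in one stroke. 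Your version is arguably cleaner and makes explicit use of $0\in\S_i(x)$, whereas the paper's argument only needs convexity of the distance along the ray and the vanishing of the distance at $\rho_i(x,v)$. Your derivation of (d) directly from the definition of the supremum is also simpler than the paper's appeal to \eqref{increasing}. Items (a), (c), (f) are handled essentially as in the paper.

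There is, however, one genuine slip in item (e). Your ``reverse inequality'' paragraph shows that every $r<\tilde\rho(x,v)$ satisfies $r\leq\rho_i^\epsilon(x,v)$ and then lets $r\nearrow\tilde\rho(x,v)$; this re-derives $\tilde\rho\leq\rho_i^\epsilon$, which you already obtained from (d), and never establishes the needed direction $\rho_i^\epsilon\leq\tilde\rho$. Continuity alone cannot give it: one must know that the sublevel set $\{r>0:\dst(rLv,\S_i(x))\leq\epsilon\}$ is an interval, so that its supremum is bounded by the infimum of its complement in $(0,\infty)$. Fortunately your own scaling inequality supplies exactly this: if $\dst(rLv,\S_i(x))\leq\epsilon$ and $0<r'<r$, then $\dst(r'Lv,\S_i(x))\leq\tfrac{r'}{r}\dst(rLv,\S_i(x))\leq\epsilon$, so the sublevel set is $(0,\rho_i^\epsilon]$ or $(0,\rho_i^\epsilon)$ and its complement has infimum $\rho_i^\epsilon$. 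With that one line inserted, (e) is complete (and for $v\in\mathcal{I}_i(x)$ both sides equal $+\infty$ under the stated convention), and your item (f) goes through unchanged.
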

	
\begin{proof}
Let us first prove $b)$. Fix $i$, let $r,\beta \in \mathbb{R}$  be given and consider the function \begin{align*}
\gamma_{ r,\beta}(t):= \dst( (t+r)Lv,\S_i(x)) - \beta \text{ for } t\geq 0.
\end{align*}
It is easy to see that $\gamma_{ r,\beta}$ is a  convex function, and therefore, whenever $\gamma_{ r,\beta}(0)<0$ and $\gamma_{ r,\beta}(t_2)\geq 0$, we have that  
\begin{align}\label{inequa:01}
\gamma_{ r,\beta}(t_1) <\frac{ t_1}{t_2} \gamma_{ r,\beta}(t_2), \text{ for all } 0< t_1 < t_2.  
\end{align}
We will make use of this inequality by picking appropriate $r,\beta, t_1$ and $t_2$ values. Indeed,  if $v\in \mathcal{F}_i(x)$, then, for some $r'>0$, we have that $\dst(r'Lv,\S_i(x))>0$. Hence, by convexity of the distance function we have that $r_0 :=\rho_i(x,v)<r'<+\infty$. Now, consider $r_2 >r_1 >r_0$ and fix $\beta\in (0,\dst(r_1Lv, \S_i(x)))$. Then, using inequality \eqref{inequa:01} with $t_1 =r_1 -r_0$, $t_2=r_2 -r_0$ and $r=r_0$ we see that for all $r_2>r_1 >\rho_i(x,v)$
\begin{align}\label{increasing_general}
	0 < \dst(r_1Lv, \S_i(x)) -\beta <\frac{r_1-\rho_i(x,v)}{r_2 - \rho_i(x,v)  }\left( 	\dst(r_2Lv,\S_i(x)) -\beta \right) < 	\dst(r_2Lv,\S_i(x)) -\beta,
\end{align}
which in fact  implies that \eqref{increasing} holds, thus proving part $b)$. Now, since 
$\beta\in (0,\dst(r_1Lv, \S_i(x)))$, letting $r_2 \to \infty$ in \eqref{increasing_general} we have that necessarily $\lim_{r_2\to \infty}\dst(r_2Lv,\S_i(x)) =+\infty$,  which shows $c)$. Now, let us prove $a)$. If $v\in\mathcal{F}(x)$ then $v\in \mathcal{F}_i(x)$ for some $i\in\{1,\ldots,s\}$ and by Item $b)$ and $c)$ the set $\{r\geq 0: \dst(rLv,\S_i(x))\leq \epsilon\}$ must be bounded yielding $\rho_i^\epsilon(x,v)<+\infty$ and in consequence $\rho_\epsilon(x,v)<+\infty$. On the other hand if $v\in \mathcal{I}(x)$ we have that $\rho_i^\epsilon(x,v)=+\infty$ for all $i$ and so $\rho_\epsilon(x,v)=+\infty$ concluding the proof of Item $a)$. Item $d)$ follows by using \eqref{increasing} with $r_1=\rho_i^\epsilon(x,v)$ and $r_2=r$. Item $e)$ follows from Item $d)$ and the continuity of the distance function. Finally, Item $f)$ follows from Items $d)$ and $e)$.
		%By part $c)$ and continuity of the distance function there exists $r>0$ such that $\dst(rLv,\S(x))=\epsilon$. Hence, due to the definition of $\rho_\epsilon(x,v)$ together with \eqref{inequa:01} we obtain that $r=\rho_\epsilon(x,v)$, that is, $\dst(\rho_\epsilon(x,v)Lv,\S(x))=\epsilon$.
\end{proof}

The next definition corresponds to a growth condition over the coderivative   of the set-valued mappings $\S_i$. It corresponds to a natural extension of the growth condition used in the (sub-)gradient formul{\ae} obtained using the spherical radial decomposition (see, e.g., \cite{vanAckooij_Henrion_2016}). Furthermore, this growth condition is a technical assumption, which can be easily verified in many applications, and allows us to bound the subgradients of the radial “probability-like” function, defined in \eqref{definition_probability-like_e}, locally around $\bar{x}$ and uniformly over all directions $v\in \mathbb{S}^{m-1}$. 
	
\begin{definition}[$\eta$-growth condition for a family of set-valued mappings]
\label{def:growth:setvalued} 
Consider $\bar{x} \in U$ fixed but arbitrary.  Let $\eta : \Re \to  [0,+\infty]$  be a non-decreasing mapping such that
\begin{equation*}
%\label{growth:setvalued}
		\lim\limits_{\|z\|\to+\infty}  \|z\|^{m} \bar{f}_\xi (z) \eta(\|z\|) =0.
\end{equation*}

We say that the family of set-valued mappings $\S_i$ satisfies the $\eta$-growth condition at $\bar{x}$ if for some $l>0$
\begin{equation}\label{thetagrowth2:setvalued}
			\| D^\ast \S_i(x,z)\| \leq l \eta(\|z\|),  \;
			\forall x \in \mathbb{B}_{1/l} (\bar{x}), \;\; \forall z\in\Re^m,
\end{equation}
	%	\begin{equation}\label{thetagrowth2:setvalued}
	%		\| D^\ast \S(x,P_{\S(x)}(z) \| \leq l \eta_{\xi}(z),  \;
	%		\forall x \in \mathbb{B}_{1/l} (\bar{x}), \;\; \forall z:\|z\| \geq l,
	%	\end{equation}
	
	%	We say that the family of set-valued mapping $\S$ satisfies
	%	the $\eta_{\theta}$-growth condition at $(\bar{x}, \bar{v})$ if
	%	for some $l  >0$
	%	\begin{equation}\label{thetagrowth2:setvalued}
	%		\| D^\ast \S(x,P_{\S(x)}(rLv + \bar{z}) ) \| \leq l \eta_{\theta}(r,v),  \;
	%		\forall (x,v) \in \mathbb{B}_{1/l} (\bar{x}) \times
	%		\mathbb{B}_{1/l} (\bar{v}), \;\; \forall  r \geq l;  
	%	\end{equation}
where \begin{equation*}
\|D^\ast \S_i(x,z) \|:=\sup\left\{ \| x^\ast\| : x^\ast \in D^\ast \S_i(x,z)(z^\ast) \text{ and } \| z^\ast\| = 1 \right\}. 
\end{equation*}
\end{definition}

Here, it is important to mention that as a result of the coderivative    being a positively homogeneous set-valued mapping, we can deduce the following inequality:
\begin{equation*}
\|D^\ast \S_i(x,z)(z^*)\| \leq  \|D^\ast \S_i(x,z) \| \|z^*\|, \text{ for any } z^*\in \mathbb{R}^m\backslash\{0\}. 
\end{equation*}
 Let us define, for $x\in U$, $\epsilon >0$ and $v\in \mathcal{F}(x)$ 
	\begin{align}\label{def:Meps}
		\mathcal{M}_\epsilon(x,v):=\left\{ \alpha   \cdot \frac{  x^\ast }{%
			\left\langle   \zep{\epsilon}{x}{v}- P_{\S_i(x)}(\zep{\epsilon}{x}{v})
			,Lv\right\rangle }      : \begin{array}{c}
			i\in T^\epsilon_x(v),\alpha \in I_{\theta}(\rho_\epsilon(x,v),v)\\
			x^\ast \in  D^\ast \S_i(x,P_{\S_i(x)}(\zep{\epsilon}{x}{v}) )(  P_{\S_i(x)}(\zep{\epsilon}{x}{v}) -\zep{\epsilon}{x}{v} )
		\end{array} \right\}   
	\end{align}
	where  
	\begin{equation}\label{definitionzep}
	 \zep{\epsilon}{x}{v}:=\rho_\epsilon(x,v)Lv \text{ and }    T_{x}^\epsilon(v)=\left\{i\in \{1,\ldots,s\}:\rho_i^\epsilon(x,v)=\rho_\epsilon(x,v)\right\}.
	\end{equation}
	For convenience, we define $  \mathcal{M}_\epsilon(x,v)= \{ 0\}$ for all  $v\in \mathcal{I}(x)$.

 The following result corresponds to the main theorem of this section, which establishes the Lipschitz continuity and a subdifferential formula for the enlarged probability function \eqref{P01eps}. The result essentially follows from an application of Theorem 3.1 in \cite{vanAckooij_Perez-Aros_2022}, except that the latter result is given in finite dimension and with smooth constraint system only. The appendix therefore provides a careful extension to the infinite dimensional setting and proof of the Theorem.

\begin{theorem}\label{Theorem:epsilon}
Let $\bar{x} \in U$ be given and assume that \eqref{eq:boundedassump:set} holds true. Moreover, assume that the family of set-valued mappings $\S_i$ satisfies the $\eta$-growth condition at $\bar{x}$ and that each $\S_i$ satisfies Assumption  \eqref{Assump_Setvalued} at $\bar x$ with common neighbourhood $U$. Then the probability function \eqref{P01eps} is locally Lipschitz around $\bar x$ and on an appropriate neighbourhood $U'$ of $\bar x$ it holds:
		\begin{align}
		\label{eq:pmord_inclusion}
			\pmord \varphi_\epsilon (x)\subseteq -\cl{{w}^\ast }\left(  \int_{\mathbb{S}^{m-1}}  \mathcal{M}_\epsilon(x,v)  d\mu_\zeta(v)\right), \text{ for all } x \in U'
		\end{align}
 where $\mathcal{M}_\epsilon(x,v)$ is defined in \eqref{def:Meps}. In addition, if $\H$ is finite-dimensional the closure can be omitted in the right-hand side of \eqref{eq:pmord_inclusion}.
\end{theorem}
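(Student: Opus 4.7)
The plan is to follow the spherical radial decomposition architecture of \cite{vanAckooij_Perez-Aros_2022} but adapt each step so that (i) the constraint system is set-valued rather than given by a smooth inequality, and (ii) the analysis is valid in the separable reflexive Banach space $\H$. Concretely, I would first exploit the representation
\begin{equation*}
\varphi_\epsilon(x)=\int_{\mathbb{S}^{m-1}} e_\epsilon(x,v)\,d\mu_\zeta(v),\qquad e_\epsilon(x,v)=\int_0^{\rho_\epsilon(x,v)} \theta(r,v)\,dr,
\end{equation*}
which follows from the identity $\{z\in \S_i(x)+\epsilon\mathbb{B}\;\forall i\}=\{z:\max_i \dst(z,\S_i(x))\leq \epsilon\}$ together with the strict monotonicity of $r\mapsto \dst(rLv,\S_i(x))$ past the threshold $\rho_i(x,v)$, guaranteed by Lemma \ref{domain:rho} (b)--(e).

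The core technical step is to obtain a subdifferential upper-estimate of the radial probability-like function $e_\epsilon(\cdot,v)$ for each $v\in \mathcal{F}(x)$. Because $e_\epsilon(\cdot,v)$ is, up to the density factor $\theta$, the integral up to the implicit threshold $\rho_\epsilon(x,v)$, the chain/Leibniz rule yields an upper-estimate of the form $\pmord e_\epsilon(x,v)\subseteq -I_\theta(\rho_\epsilon(x,v),v)\cdot \pmord \rho_\epsilon(x,\cdot)$, where the density envelopes come from $I_\theta$ in \eqref{defin_Itheta}. To compute $\pmord \rho_\epsilon(x,\cdot)$ I would apply the nonsmooth implicit-function procedure to the equation $u_i(x,\rho_\epsilon(x,v) Lv)=\tfrac{1}{2}\epsilon^2$ active precisely on $T_x^\epsilon(v)$, using Lemma \ref{lemma:derivative} to substitute for $\pmord u_i$. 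The transversality estimate of Lemma \ref{Transversal_Directions} (together with Lemma \ref{lemma:cont:project}) guarantees that the scalar $\langle \zep{\epsilon}{x}{v}-P_{\S_i(x)}(\zep{\epsilon}{x}{v}),Lv\rangle$ appearing in the denominator of \eqref{def:Meps} is bounded away from zero uniformly in $x\in U'$ and $v\in \mathbb{S}^{m-1}$, so that the resulting coderivative-based expression is well-defined, which produces exactly the set $\mathcal{M}_\epsilon(x,v)$; the union over $i\in T_x^\epsilon(v)$ accounts for the potential non-uniqueness of the binding index.

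Next, to show local Lipschitz continuity of $\varphi_\epsilon$ around $\bar x$ and to pass from the pointwise-in-$v$ estimate to the integral formula \eqref{eq:pmord_inclusion}, I would invoke a measurable-selection based Leibniz rule for the limiting subdifferential, in the spirit of \cite{Correa_Hantoute_Perez-Aros_2019,Correa_Hantoute_Perez-Aros_2020}. This is where the $\eta$-growth condition enters decisively: combined with \eqref{eq:boundedassump:set} and the explicit $r^{m-1}f_\xi(rLv)$ structure of $\theta$, it produces an integrable majorant of $\sup\{\|x^\ast\|:x^\ast\in \mathcal{M}_\epsilon(x,v)\}$ in a neighbourhood $U'\subseteq U\cap \mathbb{B}_{1/l}(\bar x)$, uniformly for $v\in \mathbb{S}^{m-1}$. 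Lebesgue-type differentiation through the integral then yields both the Lipschitz estimate and the Aumann-integral inclusion for $\pmord \varphi_\epsilon(x)$.

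The main obstacle will be the infinite-dimensional aspect of the Leibniz rule: in the separable reflexive Banach space $\H^\ast$, the Aumann integral of measurable set-valued mappings with norm-integrable bound need not be $w^\ast$-closed, so one must take a $w^\ast$-closure to obtain a valid upper estimate of the limiting subdifferential, which is precisely why \eqref{eq:pmord_inclusion} involves $\cl{w^\ast}$. Thus I would carefully reprove (or adapt from the appendix the authors promise) the subgradient-selection/Leibniz rule of \cite{vanAckooij_Perez-Aros_2022} in this $w^\ast$-setting, using the separability of $\H$ (hence of $\H^\ast$ via reflexivity on bounded sets equipped with the $w^\ast$-topology) to secure measurable selections, and noting that in the finite-dimensional case $w^\ast$ coincides with the norm topology and the integrand is norm-bounded, so closure is automatic and can be dropped as stated.
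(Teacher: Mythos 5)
Your proposal follows essentially the same route as the paper: the paper's proof (given in the appendix) rewrites $\varphi_\epsilon$ as a probability function over the nonsmooth inequality system $g_i(x,z)=\tfrac{1}{2}\dst^2(z,\S_i(x))-\tfrac{\epsilon^2}{2}\leq 0$, uses Lemma \ref{lemma:derivative} to convert $\pmord g_i$ into coderivatives of $\S_i$, estimates $\pmord_x\rho_\epsilon$ by a mean-value-inequality argument (rather than your implicit-function phrasing, but to the same effect), deduces the $\eta_\theta$-growth condition for the $g_i$ from the $\eta$-growth condition on the $\S_i$, and closes with a $w^\ast$-closed Leibniz/Aumann-integral rule exactly as you describe. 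One caveat: you invoke Lemma \ref{Transversal_Directions} (and Lemma \ref{lemma:cont:project}) to bound the denominator $\langle \zep{\epsilon}{x}{v}-P_{\S_i(x)}(\zep{\epsilon}{x}{v}),Lv\rangle$ away from zero, but those lemmas require $0\in\inte(\S_i(\bar x))$, which is \emph{not} among the hypotheses of Theorem \ref{Theorem:epsilon}; the correct and sufficient source of transversality here is that $0$ is a Slater point of the \emph{enlarged} system, i.e.\ $g_i(x,0)=-\tfrac{\epsilon^2}{2}<0$ by Assumption \eqref{Assump_Setvalued}\,a), which via convexity of $g_i(x,\cdot)$ gives $\langle z^\ast,Lv\rangle\geq \epsilon^2/(2\rho_\epsilon(x,v))>0$ — this is how the paper argues, and it is the reason the interiority assumption is only needed later, when passing to the limit $\epsilon\to 0$ in Theorem \ref{theo:Main}.
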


\section{Lipschitz continuity and subdifferential of probability function $\varphi$}
\label{section:main}
Now, we will focus on establishing the Lipschitz continuity and subgradient formula for the probability function $\varphi$ given in \eqref{P00}. Our technique of proof relies on the use of the subgradient estimation of the enlargement $\varphi_\epsilon$, established in Theorem \ref{Theorem:epsilon}, to approximate the subdifferential of the probability function $\varphi$. The derivation of the main result will be the topic of the first paragraph in this section. Subsequently, we will consider two derived settings and present how the main result can be declined in such cases.

\subsection{Subgradient estimate for the probability function}

The next lemma provides a first estimation of the subdifferential of $\phi$, leveraging on our earlier obtained results for the mapping $\phi_{\eps}$.
	\begin{lemma}[Approximation of subgradients]
	\label{aprox:subgrad}
		Consider the probability function $\varphi$ defined in \eqref{P00}, and the family of probability functions $\varphi_\epsilon$ given by \eqref{P01eps}. % and their radial probability-like associated functions $e$ and $e_\epsilon$, respectively.
		Then, for all $x\in U$ 
		\begin{align}\label{eqinf}
			\varphi(x)&=\inf\limits_{\epsilon>0} \varphi_\epsilon(x).\\
		\pmord\varphi(x)& \subseteq \left\{  x^\ast \in \H :  \begin{array}{c}
		\text{There exist } x_k \to x, \epsilon_k \to 0^+ \\
		\text{and  } x_k^\ast \in \pmord\varphi_{\epsilon_k}(x_k)  \text{ s.t. }  x_k^\ast \rightharpoonup x^\ast 
		\end{array}   \right\}.	\nonumber	
		%	e(x,v) &= \inf\limits_{\epsilon>0} \varphi_\epsilon(x), \text{ for all } x\in U \text{ and all } v\in \mathbb{S}^{m-1}
		\end{align}
%		Moreover, 
%		\begin{align}\label{eqinf:subgradient}
	%		
	%		\subseteq w^\ast\textnormal{-}\limsup\limits_{ (x_k,\epsilon_k)\to (x,0) } \varphi_\epsilon(x),% \text{ for all } x\in U.\\
		%	\pmord_x e(x,v) &=w^\ast\textnormal{-}\limsup\limits_{ (x_k,\epsilon_k) \to (x,0) } 	e_\eps(x), \text{ for all } x\in U \text{ and all } v\in \mathbb{S}^{m-1}
	%	\end{align}
%		where $w^\ast\textnormal{-}\limsup\limits_{ (x_k,\epsilon_k)\to (x,0) } $ refers to the set of all weak-cluster points of sequences $x^\ast 
	\end{lemma}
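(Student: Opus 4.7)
The statement splits naturally into the infimum formula and the subdifferential inclusion; my plan is to establish the former first and then exploit it to control an approximation error in the latter.

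\textbf{Step 1 (infimum formula).} Fix $x \in U$. Since $\S_i(x) + \epsilon_1 \mathbb{B} \subseteq \S_i(x) + \epsilon_2 \mathbb{B}$ whenever $\epsilon_1 \leq \epsilon_2$, the map $\epsilon \mapsto \varphi_\epsilon(x)$ is non-decreasing. Closedness of each $\S_i(x)$ further yields $\bigcap_{\epsilon > 0}(\S_i(x) + \epsilon \mathbb{B}) = \S_i(x)$, so that the events $\{\xi \in \bigcap_{i=1}^s(\S_i(x) + \epsilon \mathbb{B})\}$ decrease monotonically to $\{\xi \in \bigcap_{i=1}^s \S_i(x)\}$ as $\epsilon \downarrow 0$. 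Continuity from above of $\mathbb{P}$ then gives $\varphi_\epsilon(x) \downarrow \varphi(x)$, whence $\inf_{\epsilon > 0} \varphi_\epsilon(x) = \varphi(x)$.

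\textbf{Step 2 (subdifferential inclusion).} Any $x^* \in \pmord \varphi(x)$ is by definition a weak$^\ast$-limit of Fréchet subgradients $y_k^* \in \pfrech \varphi(y_k)$ along $y_k \xrightarrow{\varphi} x$. A diagonal extraction therefore reduces the task to the following claim: for each fixed pair $(y_0, \hat{x}^*)$ with $\hat{x}^* \in \pfrech \varphi(y_0)$, one can find $\epsilon_n \to 0^+$, $z_n \to y_0$, and $z_n^* \in \pmord \varphi_{\epsilon_n}(z_n)$ with $\|z_n^* - \hat{x}^*\|_\ast \to 0$. Fixing $\tau > 0$, the defining inequality for the Fréchet subdifferential produces $\delta > 0$ such that
\begin{equation*}
\varphi(y) \geq \varphi(y_0) + \langle \hat{x}^*, y - y_0 \rangle - \tau \|y - y_0\|, \quad \forall y \in \mathbb{B}_\delta(y_0).
\end{equation*}
Because $\varphi_\epsilon \geq \varphi$ pointwise, the auxiliary function $h_\epsilon(y) := \varphi_\epsilon(y) - \langle \hat{x}^*, y - y_0 \rangle + \tau \|y - y_0\|$ is bounded below on $\mathbb{B}_\delta(y_0)$ by $\varphi(y_0)$, while $h_\epsilon(y_0) = \varphi_\epsilon(y_0) \to \varphi(y_0)$ by Step 1. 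Setting $\beta_\epsilon := \varphi_\epsilon(y_0) - \varphi(y_0)$ one has $\beta_\epsilon \downarrow 0$, so I would apply Ekeland's variational principle to $h_\epsilon$ on the complete metric space $\mathbb{B}_\delta(y_0)$ with parameter $\sqrt{\beta_\epsilon}$, producing $z_\epsilon \in \mathbb{B}_{\sqrt{\beta_\epsilon}}(y_0)$ which minimises $h_\epsilon(\cdot) + \sqrt{\beta_\epsilon}\|\cdot - z_\epsilon\|$ on $\mathbb{B}_\delta(y_0)$.

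For $\epsilon$ small, $z_\epsilon$ lies strictly in the interior of $\mathbb{B}_\delta(y_0)$, so Fermat's rule together with the Mordukhovich sum rule in the Asplund space $\H$ (separable reflexive Banach spaces are Asplund) deliver $z_\epsilon^* \in \pmord \varphi_\epsilon(z_\epsilon)$ with $\|z_\epsilon^* - \hat{x}^*\|_\ast \leq \tau + \sqrt{\beta_\epsilon}$; the two dual-ball contributions come from the convex subdifferentials of the Lipschitz norm terms $\tau\|\cdot - y_0\|$ and $\sqrt{\beta_\epsilon}\|\cdot - z_\epsilon\|$, while the Lipschitz continuity of $\varphi_\epsilon$ supplied by Theorem \ref{Theorem:epsilon} legitimises the exact sum rule. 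Letting $\epsilon \to 0^+$ and then $\tau \to 0^+$ via a diagonal extraction produces the required approximating sequence, and a final diagonalisation over $k$ concludes. The main obstacle is the clean combination of the Mordukhovich sum rule with Ekeland's principle in the Asplund setting: one must ensure that the only non-smooth pieces of $h_\epsilon$ beyond $\varphi_\epsilon$ are convex and Lipschitz (so their subdifferentials are scaled dual balls), and that $\sqrt{\beta_\epsilon}$ is small enough to keep $z_\epsilon$ in the interior of $\mathbb{B}_\delta(y_0)$, so that the resulting error $\tau + \sqrt{\beta_\epsilon}$ genuinely controls $\|z_\epsilon^* - \hat{x}^*\|_\ast$ and vanishes in the diagonal limit.
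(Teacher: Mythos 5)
Your proof is correct and its overall architecture matches the paper's: the infimum formula via monotonicity and continuity from above of $\mathbb{P}$ is identical, and the subdifferential inclusion is in both cases reduced to approximating a Fréchet subgradient $\hat{x}^\ast\in\pfrech\varphi(y_0)$ by subgradients of $\varphi_{\epsilon}$ at nearby points, followed by a diagonal extraction. The difference is in how that key approximation is justified: the paper simply cites an external result (Lemma 2.1 of \cite{vanAckooij_Perez-Aros_2019}, tailored to a non-increasing family $\varphi_\epsilon\downarrow\varphi$), whereas you prove it from scratch via Ekeland's variational principle applied to $h_\epsilon(y)=\varphi_\epsilon(y)-\langle\hat{x}^\ast,y-y_0\rangle+\tau\|y-y_0\|$, Fermat's rule, and the exact sum rule in the Asplund space $\H$. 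What your route buys is self-containedness and a norm-convergent (rather than merely weak$^\ast$-convergent) approximating sequence $z_n^\ast\to\hat{x}^\ast$; what it costs is that you must invoke regularity of $\varphi_\epsilon$ (lower semicontinuity for Ekeland, local Lipschitz continuity for the exact sum rule) that is supplied by Theorem \ref{Theorem:epsilon} but is not formally among the hypotheses of the lemma as stated — the paper's citation quietly sidesteps this by working with a general monotone family. Two small points to tidy: handle the degenerate case $\beta_\epsilon=0$ (take $z_\epsilon=y_0$ directly), and state explicitly that the final diagonalisation over $k$ uses metrizability of the weak$^\ast$-topology on bounded sets, which is where separability and reflexivity of $\H$ enter (the paper makes this explicit).
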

	
\begin{proof}
	A direct application of the continuity of the probability measure shows  \eqref{eqinf}. Now, consider a point  $x^\ast \in \pmord \varphi(x)$. It follows from definition that there are sequences $x_k \to x$ with $\varphi(x_k)\to\varphi(x)$ and $x_k^\ast  \rightharpoonup  x^\ast$ such that $x^\ast_{k} \in \pfrech \varphi(x_k)$. Hence, using \cite[Lemma 2.1]{vanAckooij_Perez-Aros_2019} for each point $x^\ast_{k}$ we can get  sequences $x_{k,j} \to x_k$ with $\varphi(x_{k,j})\to\varphi (x_k)$ and $x_{k,j}^\ast \to x_{k}^\ast$ such that $x_{k,j}^\ast\in  \pfrech \varphi_{\epsilon_{k,j}}  (x_{k,j})$. Now, since $\H$ is reflexive and separable we have that the weak$^\ast$-topology is metrizable on bounded sets (see, e.g., \cite{MR2766381}), and it allows us to use a diagonal argument to conclude the result.
	\end{proof}
	
% 	\begin{definition}\label{def:inte}
% 	 We say that a set-valued mapping $\S$ has the interior continuity property on $U\subseteq \H$, if for every $x\in U$  and $z\in \inte( \S(x)) $ there exists $r>0$ such that
% 		\begin{align*}
% 			\mathbb{B}_r(z) \subseteq \S(x'), \text{ for all  } x'\in \mathbb{B}_r(x).
% 		\end{align*}
% 	\end{definition}

% 	\begin{remark}
% 	\label{remark:inte}
% 	As consequences of the above lemma we have:
% 	\begin{itemize}
% 	    \item we can replace Assumption \eqref{Assump_Setvalued} Item $b)$ by $\S$ being locally Lipschitz-like only at $z\in \bd \S(x)$ while maintaining Lemma \ref{lemma:derivative} and the subsequent results. Indeed, if $z\in \inte \S(x)$ we have thus that $d(z',\S(x'))=0$ for all $(x', z')$ close enough to $(x,z)$, and consequently the function $u$ defined in Lemma \ref{lemma:derivative} is trivially locally Lipschitz.
% 	    \item when computing $\|D^\ast\S(x,z)\|$ in the $\eta$-growth condition in Definition \ref{def:growth:setvalued}, the nontrivial points are when $z\in \bd \S(x)$. Indeed, if $z\in \inte \S(x)$ we have that $(x,z)\in \inte(\gph \S)$  and in consequence $D^\ast\S(x,z)(z^\ast)$ is either empty or $0\in \H$.
% 	\end{itemize}
% 	\end{remark}
The following lemma shows that the radial functions generated by the set-valued mappings $\S_i$, defined in \eqref{radial_function_enlargement_i}, are continuous with respect to all the parameters $(\epsilon, x, v)$.
	\begin{lemma}\label{continuity:rhoeps}
	Let each $\S_i$ of the family of set-valued mappings satisfy Assumption \eqref{Assump_Setvalued} at $\bar{x}\in U$, the latter being the common neighbourhood.
	Then, there exists an open neighborhood $U' \subseteq U$, such that for every sequence $[0,+\infty)\times U'\times \mathbb{S}^{m-1}\ni (\epsilon_k,x_k,v_k) \to (\epsilon,  x,v)  \in [0,+\infty)\times U'\times \mathbb{S}^{m-1}$ we have that
		\begin{align}
		\label{eq:continuity:rhoeps_i}
			\rho_i^\epsilon(  x,v)=\lim\limits_{ k \to +\infty} \rho_i^{\epsilon_k}(x_k,v_k) \text{ for each }i=1,\ldots,s.
		\end{align}
	Furthermore,
	\begin{align}
	\label{eq:continuity:rhoeps}
			\rho_\epsilon(  x,v)=\lim\limits_{ k \to +\infty} \rho_{\epsilon_k}(x_k,v_k).
		\end{align}
	\end{lemma}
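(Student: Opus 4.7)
The plan is to first establish the per-index continuity \eqref{eq:continuity:rhoeps_i}; the ``min'' assertion \eqref{eq:continuity:rhoeps} then follows at once from $\rho_\epsilon=\min_{1\le i\le s}\rho_i^\epsilon$ together with the fact that a finite minimum of extended-real-valued continuous functions is continuous. I therefore fix $i$, take $(\epsilon_k,x_k,v_k)\to(\epsilon,x,v)$ in $[0,+\infty)\times U'\times\mathbb{S}^{m-1}$, and prove separately the $\limsup$ and $\liminf$ inequalities. The tools I intend to use are the joint continuity of $(x,z)\mapsto \dst(z,\S_i(x))$ furnished by Lemma~\ref{lemma:derivative}, the convexity/monotonicity facts collected in Lemma~\ref{domain:rho} (most importantly parts b, e, f), and the interior propagation of Lemma~\ref{lemma:inte}.

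For the upper bound $\limsup_k \rho_i^{\epsilon_k}(x_k,v_k)\le \rho_i^\epsilon(x,v)$ the case $\rho_i^\epsilon(x,v)=+\infty$ is vacuous. Otherwise I pick an arbitrary $R>\rho_i^\epsilon(x,v)$, use Lemma~\ref{domain:rho}(b)--(e) to obtain $\dst(RLv,\S_i(x))>\epsilon$ strictly, and then invoke the joint continuity of the distance function to get $\dst(RLv_k,\S_i(x_k))>\epsilon_k$ for $k$ large, which via Lemma~\ref{domain:rho}(e) forces $\rho_i^{\epsilon_k}(x_k,v_k)\le R$. Letting $R\downarrow\rho_i^\epsilon(x,v)$ finishes this step.

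For the lower bound $\liminf_k \rho_i^{\epsilon_k}(x_k,v_k)\ge \rho_i^\epsilon(x,v)$ I select $R<\rho_i^\epsilon(x,v)$ and aim for $\dst(RLv_k,\S_i(x_k))\le\epsilon_k$ eventually. When $\epsilon>0$ the argument mirrors the previous step: the strict monotonicity and uniqueness in Lemma~\ref{domain:rho} give $\dst(RLv,\S_i(x))<\epsilon$, the joint continuity of the distance function transfers this to $\dst(RLv_k,\S_i(x_k))<\epsilon_k$ for large $k$, and one concludes. When $\epsilon=0$ the distance-based comparison collapses to $0<0$ and this route fails; instead I would shrink $U'$ so that $0\in\inte\S_i(x)$ for all $x\in U'$, exploit convexity of $\S_i(x)$ to write $RLv$ as a strict convex combination of the interior point $0$ and the boundary point $\rho_i(x,v)Lv\in\S_i(x)$, and conclude $RLv\in\inte\S_i(x)$. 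Lemma~\ref{lemma:inte} then produces a fixed ball $\mathbb{B}_\gamma(RLv)\subseteq \S_i(x_k)$ whenever $x_k$ is close enough to $x$; since also $RLv_k\to RLv$, the inclusion $RLv_k\in\S_i(x_k)$ holds eventually and $\dst(RLv_k,\S_i(x_k))=0\le\epsilon_k$ is achieved.

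The crux of the whole argument is the $\liminf$ step at $\epsilon=0$: the purely topological distance-based reasoning that is sufficient when $\epsilon>0$ degenerates there, and one genuinely needs convexity of the values of $\S_i$ together with the interior propagation result. This is also what dictates the choice of the refined neighborhood $U'$ --- it is taken small enough so that the interior condition feeding Lemma~\ref{lemma:inte} is valid simultaneously for all $x\in U'$ and all $i=1,\dots,s$.
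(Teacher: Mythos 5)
Your proposal is correct and follows essentially the same route as the paper: the upper bound and the $\epsilon>0$ lower bound via joint continuity of $(x,z)\mapsto\dst(z,\S_i(x))$ together with the monotonicity/uniqueness facts of Lemma~\ref{domain:rho}, and the $\epsilon=0$ lower bound via convexity and the interior-propagation Lemma~\ref{lemma:inte}; your limsup/liminf organization with test radii $R$ and the ``min of continuous functions'' step are only cosmetic variants of the paper's cluster-point and subsequence arguments. The one place you are more explicit than the paper is the requirement $0\in\inte\S_i(x)$ (not guaranteed by Assumption~\eqref{Assump_Setvalued}, which only gives $0\in\S_i(x)$): the paper's proof silently uses the same fact when it deduces $r'Lv\in\inte(\S_i(x))$ from $r'<\rho_i(x,v)$, so flagging it is a point in your favour rather than a deviation.
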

	
	\begin{proof}
	Fix $i$ and consider $(\epsilon_k,x_k,v_k)\to (\epsilon,x,v)$. Let us first prove equality \eqref{eq:continuity:rhoeps_i} by assuming that the sequence $\rho_i^{\epsilon_k}(x_k,v_k)$ diverges. Suppose by contradiction that $\rho_i^\epsilon(x,v)<+\infty$ and consider $r>\rho_i^\epsilon(x,v)$. By Lemma \ref{domain:rho} we have that  $\dst(rLv,\S_i(x)) >\epsilon$, so by continuity of the distance function (recall Lemma \ref{lemma:derivative}) we have that $\dst(rLv_k,\S_i(x_k))>\epsilon_k$ for $k$ large enough, which, again by Lemma \ref{domain:rho}  means that $r\geq \rho_i^{\epsilon_k}(x_k,v_k)$ for $k$ large enough. In other words $\rho_i^{\epsilon_k}(x_k,v_k)$ is bounded from above and does not diverge, thus contradicting the earlier assumption.
	
	Thus, we assume that the sequence $\rho_i^{\epsilon_k}(x_k,v_k)$ admits a cluster point $r'$. Then for some subsequence we have that $\rho_i^{\epsilon_{k_l}}(x_{k_l},v_{k_l})\to_l r'$. Let us prove that $r'=\rho_i^\epsilon(x,v)$. By Lemma \ref{domain:rho} we have the equality $\dst(\rho_i^{\epsilon^{k_l}}(x_{k_l},v_{k_l})Lv_{k_l},\S_i(x_{k_l}))=\epsilon_{k_l}$ which by continuity of the distance function yield us the equality $\dst(r'Lv,\S_i(x))=\epsilon$. From the uniqueness shown in Lemma \ref{domain:rho} the result for the case $\epsilon>0$ follows. Thus it remains to prove it for $\epsilon=0$. In this case, by definition of the radial function we have that $r'\leq \rho_i(x,v)$. Suppose by contradiction that $r'<\rho_i(x,v)$. Therefore, $r'Lv \in \inte(\S_i(x))$ and hence $\rho_i^{\epsilon_{k_l}}(x_{k_l},v_{k_l})Lv\in\inte (\S_i(x))$ for $l$ large enough. By Lemma \ref{lemma:inte} we have that there exists $\gamma>0$ such that  $(\rho_i^{\epsilon_{k_l}}(x_{k_l},v_{k_l})+\gamma)Lv  \in \inte(\S_i(x_k))$, which particularly, by definition of the radial function,  implies that  $\rho_i^{\epsilon_{k_l}}(x_{k_l},v_{k_l})+\gamma \leq \rho_i (x_{k_l},v_{k_l})$ for $l$ large enough, which is a contradiction. Therefore, we have that $r'=\rho_i(x,v)$ and since this holds true for all possible cluster points we conclude \eqref{eq:continuity:rhoeps_i}.
	
	Now let us prove \eqref{eq:continuity:rhoeps}. On the one hand, there is some $i\in\{1,\ldots,s\}$ such that $\rho_\epsilon(x,v)=\rho_i^\epsilon(x,v)$ which together with \eqref{eq:continuity:rhoeps_i} lead us to $\rho_\epsilon(x,v)=\lim_k \rho_i^{\epsilon_k}(x_k,v_k)$. Since, by definition, $\rho_i^{\epsilon_k}(x_k,v_k)\geq \rho_{\epsilon_k}(x_k,v_k)$ for all $k$, we conclude that $\rho_\epsilon(x,v)\geq\lim_k \rho_i^{\epsilon_k}(x_k,v_k)$. On the other hand, for each $k$ there exists $i_k\in\{1,\ldots,s\}$ such that $\rho_{\epsilon_k}(x_k,v_k)=\rho_{i_k}^{\epsilon_k}(x_k,v_k)$. Under subsequence, we may assume that $\rho_{\epsilon_k}(x_k,v_k)=\rho_{i}^{\epsilon_k}(x_k,v_k)$ for some fixed $i$. By taking limits on this last equality and thus by \eqref{eq:continuity:rhoeps_i} we obtain that $\lim_k\rho_{\epsilon_k}(x_k,v_k)=\rho_i^\epsilon(x,v)$ which by definition of $\rho_\epsilon(x,v)$ lead us to $\lim_{k}\rho_{\epsilon_k}(x_k,v_k)\geq\rho_{\epsilon}(x,v)$, concluding the proof of \eqref{eq:continuity:rhoeps} and thus of the lemma.
\end{proof}
	
Now, the following lemma provides an upper-estimation of the set-valued mappiong $\mathcal{M}_\epsilon(x,v)$ locally around $x$ and uniformly with respect to $v\in \mathbb{S}^{m-1}$ and $\epsilon >0$.
\begin{lemma}\label{bounded:lemma}
	Let each $\S_i$ of the family of set-valued mappings satisfy Assumption \eqref{Assump_Setvalued} at $\bar{x} \in U$ with $0\in\inte(\S_i(x))$ for all $x\in U$, where $U$ is a common neighbourhood. Moreover, assume that the family of set-valued mappings satisfies the $\eta$-growth condition at $\bar{x}$ and that \eqref{eq:boundedassump:set} holds true.
	
	Then,	there  exist  a neighbourhood $U'$ of $\bar x$ and $\epsilon'>0$ such that %for all $v\in \mathbb{S}^{m-1}$ and all $x\in U'$
		\begin{align*}
			\sup\{ \|x^\ast\|: x^\ast \in \mathcal{M}_\epsilon(x,v)  ,\; v\in \mathbb{S}^{m-1},\;x\in U',\; \epsilon \in (0,\epsilon')  \}<\infty.
		\end{align*}
			Moreover, $\limsup\limits_{(\epsilon,x,v)\to (0,\bar{x},\bar{v})} \mathcal{M}_\epsilon(x,v) = \{ 0\}$ for all $\bar v\in \mathcal{I}(\bar x)$.
	\end{lemma}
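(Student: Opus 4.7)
My plan is to estimate the norm of an arbitrary element of $\mathcal{M}_\epsilon(x,v)$ by combining three ingredients simultaneously: the $\eta$-growth condition \eqref{thetagrowth2:setvalued} to control the coderivative factor, the structure of $I_\theta$ together with \eqref{eq:boundedassump:set} to control the density factor $\alpha$, and Lemma \ref{lemma:cont:project} together with $0\in\inte(\S_i(x))$ to produce a uniform positive lower bound for the denominator in \eqref{def:Meps}. Directions $v\in\mathcal{I}(x)$ are handled trivially since $\mathcal{M}_\epsilon(x,v)=\{0\}$ by definition in that case, so I would restrict attention to $v\in\mathcal{F}(x)$.

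First, I would choose $U'\subseteq U$ and $\epsilon'>0$ small enough that $U'$ is contained in the neighborhoods provided by Lemmas \ref{lemma:inte} and \ref{lemma:cont:project} and in the ball $\mathbb{B}_{1/l}(\bar x)$ of the $\eta$-growth condition. For such $(x,v,\epsilon)$ and $i\in T^\epsilon_x(v)$, Lemma \ref{domain:rho}~(f) yields $\|\zep{\epsilon}{x}{v}-P_{\S_i(x)}(\zep{\epsilon}{x}{v})\|=\epsilon$, and combining this with the positive homogeneity of $D^*\S_i$ and \eqref{thetagrowth2:setvalued} gives the pointwise bound $\|x^*\|\le l\,\eta(\|P_{\S_i(x)}(\zep{\epsilon}{x}{v})\|)\,\epsilon$ for any $x^*$ appearing in \eqref{def:Meps}. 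For the denominator, applying Lemma \ref{lemma:cont:project} at $z=\zep{\epsilon}{x}{v}=\rho_\epsilon(x,v)Lv$ yields
\begin{equation*}
\big\langle \zep{\epsilon}{x}{v}-P_{\S_i(x)}(\zep{\epsilon}{x}{v}),\,Lv\big\rangle \;\ge\; \frac{r\,\epsilon}{\rho_\epsilon(x,v)},
\end{equation*}
and Lemma \ref{lemma:inte}, used with $0\in\inte(\S_i(x))$, produces a uniform lower bound $\rho_\epsilon(x,v)\ge \bar\rho>0$ on $U'\times\mathbb{S}^{m-1}$, so that the denominator does not degenerate.

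Next, I would bound the factor $\alpha$. Its definition through \eqref{defin_Itheta}--\eqref{defin_thetafunctions} together with \eqref{transformationftorho} and \eqref{eq:boundedassump:set} ensures that $|\alpha|$ is dominated by a constant multiple of $\rho_\epsilon(x,v)^{m-1}\bar f_\xi(\rho_\epsilon(x,v)Lv)$. Assembling the numerator and denominator estimates, and observing the cancellation of the powers of $\epsilon$, every element $y^*\in \mathcal{M}_\epsilon(x,v)$ satisfies an inequality of the form
\begin{equation*}
\|y^*\| \;\le\; \frac{C_L\,l}{r}\;\rho_\epsilon(x,v)^m\,\bar f_\xi(\rho_\epsilon(x,v)Lv)\,\eta\!\big(\|P_{\S_i(x)}(\zep{\epsilon}{x}{v})\|\big),
\end{equation*}
with $C_L$ depending only on $L$ and $m$. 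Since $\|P_{\S_i(x)}(\zep{\epsilon}{x}{v})\|=\rho_\epsilon(x,v)\|Lv\|+O(\epsilon)$, the right-hand side is essentially of the form $\|z\|^m\bar f_\xi(z)\eta(\|z\|)$ evaluated at $z\sim \zep{\epsilon}{x}{v}$.

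The uniform boundedness then follows by splitting on the size of $\rho_\epsilon(x,v)$: on a bounded range every factor is controlled via the local essential boundedness of $\bar f_\xi$ and the monotonicity of $\eta$, whereas for large $\rho_\epsilon(x,v)$ the limit condition $\|z\|^m\bar f_\xi(z)\eta(\|z\|)\to 0$ of the $\eta$-growth definition closes the estimate. For the second assertion, fixing $\bar v\in \mathcal{I}(\bar x)$ and any sequence $(\epsilon_k,x_k,v_k)\to(0,\bar x,\bar v)$, Lemma \ref{continuity:rhoeps} forces $\rho_{\epsilon_k}(x_k,v_k)\to\rho(\bar x,\bar v)=+\infty$, whence $\|P_{\S_i(x_k)}(\zep{\epsilon_k}{x_k}{v_k})\|\to+\infty$ and the displayed bound tends to zero. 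The main difficulty is achieving the delicate balance between the polynomial blow-up $\rho_\epsilon^m$ and the joint decay of $\bar f_\xi$ and $\eta$; this balance is precisely what the $\eta$-growth condition is tailored to guarantee, while non-degeneracy of the denominator rests on combining Lemma \ref{lemma:cont:project} with the interior condition $0\in\inte(\S_i(x))$.
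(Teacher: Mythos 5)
Your proposal is essentially correct and rests on the same three estimates as the paper's proof: the bound on $\alpha$ coming from $I_\theta$ and \eqref{eq:boundedassump:set}, the lower bound $\langle \zep{\epsilon}{x}{v}-P_{\S_i(x)}(\zep{\epsilon}{x}{v}),Lv\rangle\geq r\epsilon/\rho_\epsilon(x,v)$ obtained from Lemma \ref{lemma:cont:project} together with $\dst(\zep{\epsilon}{x}{v},\S_i(x))=\epsilon$, and the decay of $\|z\|^{m}\bar{f}_\xi(z)\eta(\|z\|)$ in the regime where $\rho_\epsilon(x,v)$ is large; your treatment of the second assertion (via Lemma \ref{continuity:rhoeps} forcing $\rho_{\epsilon_k}(x_k,v_k)\to+\infty$) is identical to the paper's. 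The organizational difference is that the paper first covers $\mathbb{S}^{m-1}$ by compactness and splits according to whether $\bar v\in\mathcal{F}(\bar x)$ or $\bar v\in\mathcal{I}(\bar x)$, using Lemma \ref{continuity:rhoeps} to make $\rho_\epsilon$ respectively bounded or arbitrarily large on a neighbourhood of $(0,\bar x,\bar v)$; crucially, in the finite-direction case it does \emph{not} invoke the $\eta$-growth condition at all, but bounds $\|x^\ast\|$ by $\kappa_i\|\zep{\epsilon}{x}{v}-P_{\S_i(x)}(\zep{\epsilon}{x}{v})\|$ via the Aubin modulus of $\S_i$ and cancels this factor against the denominator. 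You instead apply \eqref{thetagrowth2:setvalued} uniformly and split on the size of $\rho_\epsilon(x,v)$. This is a legitimate variant, but it carries one caveat the paper's route avoids: in Definition \ref{def:growth:setvalued} the function $\eta$ takes values in $[0,+\infty]$, so your bound of the form $C\rho_\epsilon^{m}\bar{f}_\xi\,\eta$ is only informative on the bounded range of $\rho_\epsilon$ if $\eta$ is finite there; the paper's use of the local Lipschitz-like modulus sidesteps this entirely. Under the (surely intended, and harmless) reading that $\eta$ is finite-valued, your argument closes; otherwise you should fall back on the Aubin property for the bounded regime, as the paper does.
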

	
	\begin{proof}
		Due to the compactness of $\mathbb{S}^{m-1}$ it is enough to show that for all $v\in \mathbb{S}^{m-1}$ 	there  are    neighbourhoods $U_v$ of $\bar x$, $V_v$ of $v$, and $\epsilon_v>0$ such that 
		\begin{align}\label{localbound}
			\sup\left\{ \|x^\ast\|: x^\ast \in \mathcal{M}_\epsilon(x,v)  ,\; v\in V_{v},\;x\in U_{v},\; \epsilon \in (0,\epsilon_v')  \right\}<\infty.
		\end{align}
		Fix $\bar v\in \mathbb{S}^{m-1}$ and let us suppose first that $\bar{v}\in \mathcal{F}(\bar{x})$. By Lemma \ref{continuity:rhoeps}, there  are neighbourhoods $U_{\bar v}$ of $\bar x$, $ V_{\bar v}$ of $\bar v$, and $\epsilon_{{\bar v}}>0$ such that
		$$\sup\{ \rho_{\epsilon}(x,v) : (\epsilon,x,v)\in W:= [0,\epsilon_{\bar v}] \times U_{\bar v} \times V_{\bar v}\} <+\infty.$$ 
		Particularly, $v\in \mathcal{F}(x)$ for all $(x,v) \in U_{\bar v}\times V_{\bar v}$. Now, fix $(\epsilon,x,v)\in W $ and consider a point $w^\ast \in \mathcal{M}_\epsilon(x,v)$ of the form:
		\begin{align}\label{vector_w}
			\alpha  \frac{  x^\ast }{%
			\left\langle   \zep{\epsilon}{x}{v}- P_{\S_i(x)}(\zep{\epsilon}{x}{v})
			,Lv\right\rangle }, 
		\end{align}
		for some $i\in T_x^\epsilon(v)$, $\alpha \in  I_{\theta}(\rho_\epsilon(x,v),v)$,  
		$x^\ast \in  D^\ast \S_i(x,P_{\S_i(x)}(\zep{\epsilon}{x}{v}) )(  P_{\S_i(x)}(\zep{\epsilon}{x}{v}) -\zep{\epsilon}{x}{v} )$ 	where $ \zep{\epsilon}{x}{v}:=\rho_\epsilon(x,v)Lv $.
		 Since the set $I_{\theta}(\rho_\epsilon(x,v),v)$ remains bounded on $W$, we have that $\alpha$ is uniformly bounded on $W$, let us say by $\bar{\alpha}$. Now, since $\S_i$ has the local Lipschitz-like property around $(x,P_{\S_i(x)}(\zep{\epsilon}{x}{v}))$ we have that there exists $\kappa_i \geq 0$ such that 
		 \begin{align*}
		 	\| x^\ast \| \leq \kappa_i \|   \zep{\epsilon}{x}{v}- P_{\S_i(x)}(\zep{\epsilon}{x}{v})   \|.
		 \end{align*}
		On the other hand, by Lemma \ref{lemma:cont:project} (shrinking enough the neighbourhood $U_{\bar v}$) there exists some $r_i>0$ such that 
	\begin{equation}\label{existence_r}
	    \left\langle   \zep{\epsilon}{x}{v}- P_{\S_i(x)}(\zep{\epsilon}{x}{v})
		,Lv\right\rangle \geq \frac{r_i}{\rho_\epsilon(x,v) }\|   \zep{\epsilon}{x}{v}- P_{\S_i(x)}(\zep{\epsilon}{x}{v})\|.
	\end{equation}

		Therefore, we obtain that $\| w^\ast\| \leq  \frac{ \bar{\alpha}\kappa  \bar{\rho} }{ r} $, where $\kappa:=\max \kappa_i$, $r:=\min r_i$ and $\bar{\rho} :=\sup\{ \rho_\epsilon(x,v) : (\epsilon,x,v)\in W  \}$ which means that \eqref{localbound} holds for  $\bar{v}\in \mathcal{F}(\bar{x})$.
		
	Now, consider the case when $\bar{v}\in \mathcal{I}(\bar{x})$. Let $\gamma>0$ and let $l>0$ be such that the family of $\S_i$ satisfies the $\eta$-growth condition at $\bar x$  (see Definition \ref{def:growth:setvalued}). By Lemma \ref{continuity:rhoeps} we have that there are neighborhoods $U_{\bar{v}}$ of $\bar{x}$, $V_{\bar{v}}$ of $\bar{v}$ and $\epsilon_{\bar{v}}>0$ such that  $\rho_{\epsilon}(x,v)>l$ for all $(\epsilon, x,v )\in W$. Moreover, by Lemma \ref{lemma:cont:project}, there exists $r>0$ such that \eqref{existence_r} holds. Therefore, $w^\ast \in \mathcal{M}_\epsilon(x,v)$ in the form of \eqref{vector_w} satisfies
		\begin{align*}
			\|w^\ast\| & \leq \bar{\theta} (\rho_\epsilon(x,v),v)  \frac{\rho_\epsilon(x,v) }{r \| \zep{\epsilon}{x}{v}- P_{\S_i(x)}(\zep{\epsilon}{x}{v})\|  } l  \eta(\|\zep{\epsilon}{x}{v}\|)  \| \zep{\epsilon}{x}{v}- P_{\S_i(x)}(\zep{\epsilon}{x}{v})\|.
			\end{align*}
			Furthermore, for some constant $C>0$ we have that $\bar{\theta} (\rho_\epsilon(x,v),v)\leq C(2\rho_\epsilon(x,v))^{m-1}\bar{f}_\xi(\zep{\epsilon}{x}{v})$ and thus
			\begin{equation*}
			    \|{w}^*\|\leq \frac{Cl2^{m-1}}{r\|Lv\|^{m-1}}\|\zep{\epsilon}{x}{v}\|^{m}\bar{f}_\xi(\zep{\epsilon}{x}{v})\eta(\|\zep{\epsilon}{x}{v}\|).
			\end{equation*}
		Since, $\rho_\epsilon(x,v) $ can be chosen arbitrarily large (shrinking $W$ if necessary) we can assume that $ \|{w}^\ast\| \leq \gamma$, and that ends the proof.	
	\end{proof}
	
%\begin{theorem}\label{theo:Main}
%		Consider each $\S_i$ in the family of set-valued mappings %satisfying Assumption \eqref{Assump_Setvalued} at $\bar{x} \in U$ %with $0\in\inte (\S_i(x))$ for all $x\in U$ and having the interior %continuity property on $U$ where the latter is a common %neighbourhood. Moreover, assume that the family of set-valued %mappings $\S_i$ 
%	satisfies the $\eta$-growth condition at $\bar{x}$ and that %\eqref{eq:boundedassump:set}  holds true.
		
%		Then the probability function \eqref{P00} is locally
%		Lipschitz at $\bar x$ and on an appropriate neighbourhood $U'$ %of
%		$\bar x$ it holds:
%		\begin{align}
%		\label{eq:main:theorem}
%			\pmord \varphi (x)\subseteq -\cl{}\left( \int\limits_{v\in %\mathcal{F}(x)}  	\mathcal{M}(x,v)    d\mu_\zeta(v) \right), \text{ %for all } x \in U',
%		\end{align}
%		where $\mathcal{M}(x,v)$ is given by, 
%		\begin{align*}
%			\mathcal{M}(x,v) = \left\{ \alpha   \cdot \frac{  x^\ast %}{%
%				\left\langle    z^\ast
%				,Lv\right\rangle }      : \begin{array}{c}
%				\alpha \in 	I_{\theta}(\rho(x,v),v),
%				z^\ast \in \nmord_{\S(x)}(\rho(x,v)Lv)\cap %\mathbb{S}^{m-1}  \\ \\i\in T_x(v), 
%				x^\ast\in  D^\ast \S_i(x,\rho(x,v)Lv )( -z^\ast) 
%			\end{array} \right\} \text{ for all }v\in \mathcal{F}(x)
%		\end{align*}
%		with
%		\begin{equation*}
%	    T_{x}(v)=\left\{i\in %\{1,\ldots,s\}:\rho_i(x,v)=\rho(x,v)\right\}
%	\end{equation*}
%		and by  $\mathcal{M}(x,v)=\{ 0\}$ for all $v\in %\mathcal{I}(x)$.
%		
%	\end{theorem}
Now, we are able to show the main result of this work, which provides the locally Lipschitz continuity of the probability function 
\begin{theorem}\label{theo:Main}
		Consider each $\S_i$ in the family of set-valued mappings satisfying Assumption \eqref{Assump_Setvalued} at $\bar{x} \in U$ with $0\in\inte (\S_i(x))$ for all $x\in U$ where the latter is a common neighbourhood. Moreover, assume that the family of set-valued mappings $\S_i$ 
	satisfies the $\eta$-growth condition at $\bar{x}$ and that \eqref{eq:boundedassump:set}  holds true.
		
	Then the probability function \eqref{P00} is locally
		Lipschitz around $\bar x$ and on an appropriate neighbourhood $U'$ of
		$\bar x$ it holds:
% 		\begin{align}
% 			\pmord \varphi (x)\subseteq -\cl{}\left( \int\limits_{v\in \mathcal{F}(x)}  	\pmord_x e(x,v)   d\mu_\zeta(v) \right), \text{ for all } x \in U'
% 		\end{align}
% 	 	 where, $e$ refers to the radial probability-like function defined in \eqref{definition_e} associated with the sublevel $\{ z\in\Re^m:  \dst(z, \S_i(x)) \leq 0 \text{ for all }i=1,\ldots,s    \}$. Moreover, we have that
% 		\begin{align}
% 			\pmord_x e(x,v) \subseteq  \cl{}\conv\mathcal{M}(x,v)
% 		\end{align}
% 		for all $v\in \mathbb{S}^{ m-1}$. 
		
% 		It is also true that
		\begin{align}
		\label{eq:main:theorem}
			\pmord \varphi (x)\subseteq -\cl{}\left( \int\limits_{\mathcal{F}(x)}  	\mathcal{M}(x,v)    d\mu_\zeta(v) \right), \text{ for all } x \in U',
		\end{align}
		where $\mathcal{M}(x,v)$ is given  as follows
		\begin{align*}
			\mathcal{M}(x,v) = \left\{ \alpha   \cdot \frac{  x^\ast }{%
				\left\langle    z^\ast
				,Lv\right\rangle }      : \begin{array}{c}
				\alpha \in 	I_{\theta}(\rho(x,v),v),
				z^\ast \in \nge_{\S_i(x)}(\zep{}{x}{v})\cap \mathbb{S}^{m-1}  \\ \\i\in T_x(v), 
				x^\ast\in  D^\ast \S_i(x,\zep{}{x}{v} )( -z^\ast) 
			\end{array} \right\} \text{ for   }v\in \mathcal{F}(x)
		\end{align*}
		with
		\begin{equation*}
		\begin{aligned}
 \zep{}{x}{v}:=\rho(x,v)Lv \text{ and }    	    T_{x}(v)&=\left\{i\in \{1,\ldots,s\}:\rho_i(x,v)=\rho(x,v)\right\}\\
	    \end{aligned}
	\end{equation*}
		and by  $\mathcal{M}(x,v)=\{ 0\}$ for   $v\in \mathcal{I}(x)$.
	
		In addition, if $\H$ is finite-dimensional the closure can be omitted in either representation.
\end{theorem}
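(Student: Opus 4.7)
The plan is to deduce Theorem \ref{theo:Main} from the enlargement result Theorem \ref{Theorem:epsilon} by letting $\epsilon \downarrow 0$. I would first establish local Lipschitzness of $\varphi$ on a neighbourhood $U'$ of $\bar{x}$ small enough that Lemma \ref{bounded:lemma} and Theorem \ref{Theorem:epsilon} both apply. By Lemma \ref{bounded:lemma}, there exist $\epsilon' > 0$ and $C > 0$ with $\|x^\ast\| \leq C$ for every $x^\ast \in \mathcal{M}_\epsilon(x, v)$, $v \in \mathbb{S}^{m-1}$, $x \in U'$ and $\epsilon \in (0, \epsilon')$. Inclusion \eqref{eq:pmord_inclusion} together with $\mu_\zeta(\mathbb{S}^{m-1}) = 1$ yields $\|x^\ast\| \leq C$ for every $x^\ast \in \pmord \varphi_\epsilon(x)$, $x \in U'$. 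Since each $\varphi_\epsilon$ is already locally Lipschitz by Theorem \ref{Theorem:epsilon}, this uniform subgradient bound translates into a common Lipschitz modulus $C$ on $U'$; via $\varphi = \inf_{\epsilon > 0} \varphi_\epsilon$ from \eqref{eqinf}, the same modulus carries over to $\varphi$.

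For the subgradient inclusion, fix $x \in U'$ and pick $x^\ast \in \pmord \varphi(x)$. Lemma \ref{aprox:subgrad} supplies $x_k \to x$, $\epsilon_k \to 0^+$ and $x_k^\ast \rightharpoonup x^\ast$ with $x_k^\ast \in \pmord \varphi_{\epsilon_k}(x_k)$. Applying Theorem \ref{Theorem:epsilon} together with a diagonal extraction, each $x_k^\ast$ may be written, up to weak-$\ast$ closure, as $-\int_{\mathbb{S}^{m-1}} g_k(v)\, d\mu_\zeta(v)$ for a measurable selection $g_k(v) \in \mathcal{M}_{\epsilon_k}(x_k, v)$, with Lemma \ref{bounded:lemma} furnishing a uniform $L^\infty$ envelope. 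It then remains to show that any pointwise weak-$\ast$ cluster $g(v)$ of the $g_k$'s lies in $\mathcal{M}(x, v)$ almost everywhere, so that the weak-$\ast$ limit of the integrals is representable as desired.

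For $v \in \mathcal{I}(x)$ the last assertion of Lemma \ref{bounded:lemma} forces $g(v) = 0 \in \mathcal{M}(x, v)$. For $v \in \mathcal{F}(x)$, Lemma \ref{continuity:rhoeps} gives $\rho_{\epsilon_k}(x_k, v) \to \rho(x, v)$ and $\zep{\epsilon_k}{x_k}{v} \to \zep{}{x}{v}$, and the pointwise convergences $\rho_i^{\epsilon_k}(x_k, v) \to \rho_i(x, v)$ imply that for large $k$ the active-index set satisfies $T_{x_k}^{\epsilon_k}(v) \subseteq T_x(v)$. Continuity of $(x, z) \mapsto P_{\S_i(x)}(z)$, established just before Lemma \ref{lemma:cont:project}, yields $P_{\S_i(x_k)}(\zep{\epsilon_k}{x_k}{v}) \to \zep{}{x}{v}$. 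Exploiting positive homogeneity of the coderivative in its dual input, I would rewrite each generator of $\mathcal{M}_{\epsilon_k}(x_k, v)$ in the form
\begin{equation*}
\alpha_k \cdot \frac{y_k^\ast}{\langle z_k^\ast, Lv \rangle},\quad z_k^\ast := \frac{\zep{\epsilon_k}{x_k}{v} - P_{\S_i(x_k)}(\zep{\epsilon_k}{x_k}{v})}{\epsilon_k} \in \mathbb{S}^{m-1},\quad y_k^\ast \in D^\ast\S_i\bigl(x_k, P_{\S_i(x_k)}(\zep{\epsilon_k}{x_k}{v})\bigr)(-z_k^\ast).
\end{equation*}
Lemma \ref{lemma:cont:project} keeps $\langle z_k^\ast, Lv \rangle$ uniformly bounded away from zero, so the denominator passes safely to the limit. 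The finite-dimensional characterization of $\nmord$ exploited in Lemma \ref{Transversal_Directions} places any cluster $z^\ast$ of $(z_k^\ast)$ in $\nmord_{\S_i(x)}(\zep{}{x}{v}) \cap \mathbb{S}^{m-1}$, and the closed-graph property of $D^\ast\S_i$, a consequence of the Lipschitz-like assumption, produces a matching cluster $y^\ast \in D^\ast\S_i(x, \zep{}{x}{v})(-z^\ast)$. Outer semicontinuity of $I_\theta(\cdot, v)$, read off from the essential upper/lower envelope definition \eqref{defin_thetafunctions}, absorbs any cluster of $(\alpha_k)$ into $I_\theta(\rho(x, v), v)$. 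Assembling the limits places $g(v)$ in $\mathcal{M}(x, v)$.

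The hardest part will be coordinating this simultaneous limit passage in $z_k^\ast$, $y_k^\ast$, the active-index set $T_{x_k}^{\epsilon_k}(v)$ and the density scalar $\alpha_k$, while preserving measurability of the limiting selection $g$ on $\mathbb{S}^{m-1}$; it is precisely this coordination that forces the weak-$\ast$ closure in \eqref{eq:main:theorem}. In finite dimension $\cl{w^\ast}$ coincides with the norm closure, the uniform bound from Lemma \ref{bounded:lemma} yields norm-bounded integrands ranging in a compact subset of a reflexive space, and a standard Lebesgue-convergence argument shows the closure operation can be dispensed with.
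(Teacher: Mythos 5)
Your proposal follows essentially the same route as the paper: approximate $\pmord\varphi(x)$ by subgradients of the enlargements $\varphi_{\epsilon_k}$ via Lemma \ref{aprox:subgrad} and Theorem \ref{Theorem:epsilon}, represent these as integrals of uniformly bounded measurable selections of $\mathcal{M}_{\epsilon_k}(x_k,\cdot)$, and identify the pointwise weak$^\ast$ cluster points as elements of $\mathcal{M}(x,v)$ using Lemmas \ref{continuity:rhoeps}, \ref{lemma:cont:project} and \ref{bounded:lemma} together with the closed-graph/SNC consequences of the Aubin property. The one step you leave implicit --- passing from pointwise cluster analysis of the selections to membership of $x^\ast$ in the closure of the integral of the cluster sets $F(v)$ --- is exactly where the paper invokes a Fatou-type result for Aumann integrals together with Lyapunov's convexity theorem (to replace the weak$^\ast$ closure by the norm closure); with that citation supplied, your argument matches the paper's.
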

	
\begin{proof}
Let $U'$ be the neighbourhood of $\bar x$, further subset of $U$ of which the existence is asserted in Lemma \ref{bounded:lemma}. Next, 
let $x \in U'$ be given arbitrarily but fixed and let $x^\ast \in \pmord \varphi (x)$ likewise be arbitrary. We divide the proof into four claims.

 \textbf{Claim 1:} There exist sequences $x_k \to x$,  $\epsilon_k \to 0^+$ and $x_k^\ast \rightharpoonup x^\ast$ with
	$$x_k^\ast \in  -\int\limits_{\mathbb{S}^{m-1}} \mathcal{M}_{\epsilon_k}(x_k,v)    d\mu_\zeta(v),$$ where $\mathcal{M}_{\epsilon_k}$ is as in \eqref{def:Meps}.
%$x_k^\ast \in  -\int\limits_{v\in \mathcal{F}(x_k)}  	\pmord_x e_{\epsilon_k}(x_k,v)    d\mu_\zeta(v)$. 
%
Indeed,  by Lemma \ref{aprox:subgrad} there exists $x_k^\ast \in \pmord \varphi_{\epsilon_k } (x_k)$ with $x_k \to x$, $\epsilon_k \to 0^+$ and $x_k^\ast \rightharpoonup x^\ast$. Furthermore, by Theorem \ref{Theorem:epsilon},
	\begin{equation*}
	  x^\ast_k \in  C_k:= -\cl{w^\ast}\left(  \int\limits_{ \mathbb{S}^{m-1}}  	\mathcal{M}_{\epsilon_k}(x_k,v)    d\mu_\zeta(v)\right).  
	\end{equation*}
	Let us notice that by Lemma \ref{bounded:lemma} there is some $k_0 \in \mathbb{N}$ such that the  set $\cup_{k \geq k_0} C_k$ is bounded. Since, $\H$ is separable and reflexive, the weak$^\ast$- topology is metrizable on bounded sets (see, e.g., \cite[Theorem 5.1]{Conway_1985}, we also draw attention to \cite[Aufgabe VIII.8.16]{Werner_2011}, showing the necessity of $\H$ being separable) allowing us to take  sequences $x_{j,k}^\ast \rightharpoonup x_k^\ast $ with $x_{j,k}^\ast \in -\int_{\mathbb{S}^{m-1}}  	\mathcal{M}_{\epsilon_k}(x_k,v) d\mu_\zeta(v)$ as well as to use a diagonal argument so we can construct the desired sequence.\\

\textbf{Claim 2: }  There exists a sequence of (Bochner) integrable functions $y_k : \mathbb{S}^{m-1} \to \H$ such that 
	\begin{align*}
	y_k^\ast(v )\in   \mathcal{M}_{\epsilon_k}(x_k,v) \quad \mu_\zeta \text{-a.e.  	and } x_k^\ast = -\int_{ \mathbb{S}^{m-1}} y^\ast_k(v) d\mu_\zeta(v).
	\end{align*}

Using the definition of the integral of a set-valued mapping we get the existence of such sequence.\\

%with $y^\ast_k(v) \in \pmord_x e(x_k,v) $ for almost all $v\in \mathcal{F}(x_k)$. Then, extending the function by zero on $\mathcal{I}(x_k)$, we get the result.
%

 \textbf{Claim 3:} We have that $x^\ast \in -\cl{}\left( \int_{ \mathbb{S}^{m-1}} F (v) d\mu_\zeta(v)\right)$, where $F(v)$ is the set of sequential weak$^\ast$ limits of sequences of the form $\{ y^\ast_k(v)\}$. Moreover, the closure can be omitted when $\H$ is finite-dimensional.

	Indeed, by Lemma \ref{bounded:lemma}, we have that there exists $M>0$ such that   $\|y_k^\ast(v) \|  \leq M$ for almost all $v\in \mathbb{S}^{m-1}$.  Now, by \cite[Corollary 4.1]{MR2197293}, we have that $x^\ast \in -\textnormal{cl}^{w^\ast} \left( \int_{ \mathbb{S}^{m-1}} F (v) d\mu_\zeta(v)\right)$, where the closure operation can be omitted if the space $\H$ is finite-dimensional. Furthermore, by Lyapunov's convexity theorem, we have that the set $\cl{} \left( \int_{ \mathbb{S}^{m-1}} F (v) \mu_\zeta\right) $ is convex, so 
	\begin{equation*}
	   \textnormal{cl}^{w^\ast} \left( \int_{ \mathbb{S}^{m-1}} F (v) d\mu_\zeta\right) =\cl{} \left( \int_{ \mathbb{S}^{m-1}} F (v) d\mu_\zeta\right), 
	\end{equation*}
	and that ends the proof of our claim.\\

 \textbf{Claim 4:} We have that  $F (v) \subseteq  \mathcal{M}(x,v)$ for almost all $v\in \mathbb{S}^{m-1}$.

Indeed, consider a set of full measure $S \subseteq \mathbb{S}^{m-1}$ such that $	y_k^\ast(v )\in   \mathcal{M}_{\epsilon_k}(x_k,v)$ for all $k \in \mathbb{N}$ and $v\in S$. Then, fix  $v\in S$. First, if $v\in \mathcal{I}(x)$, we have  by Lemma  \ref{bounded:lemma} that $F(v) \subseteq \limsup  \mathcal{M}_{\epsilon_k}(x_k,v) \subseteq   \{0\}$. Now, assume that $v\in \mathcal{F}(x)$  and consider $y_v^\ast\in F(v)$. Then there exists a sequence $y^\ast_{k_j}(v)$ such that $y_{k_j}^\ast(v) \rightharpoonup y^\ast_v$ and
	\begin{align*}
	y_{k_j}^\ast(v)= \alpha_j   \cdot \frac{  x_j^\ast }{%
		\left\langle   z_j- P_{\S_{i_j}(x_j)}(z_j)
		,Lv\right\rangle } ,
	\end{align*}
	for some $i_j\in T_{x_j}^{\epsilon_j}(v)$, $\alpha_j \in I_{\theta}(\rho_{\epsilon_j}(x_j,v),v)$ and $x_j^\ast \in  D^\ast \S_{i_j}(x_j,P_{\S_{i_j}(x_j)}(z_j)  )(  P_{\S_{i_j}(x_j)}(z_j) -z_j )$, where $z_j:=\rho_{\epsilon_j}(x_j,v)Lv $. Now, we may assume (by passing to a subsequence), that for some fixed $i\in T_{x_j}^{\epsilon_j}(v)$,
		\begin{align}\label{repr:y}
	y_{k_j}^\ast(v)= \alpha_j   \cdot \frac{  x_j^\ast }{%
		\left\langle   z_j- P_{\S_{i}(x_j)}(z_j)
		,Lv\right\rangle } ,
	\end{align}
	with $\alpha_j \in I_{\theta}(\rho_{\epsilon_j}(x_j,v),v)$ and $x_j^\ast \in  D^\ast \S_{i}(x_j,P_{\S_{i}(x_j)}(z_j)  )(  P_{\S_{i}(x_j)}(z_j) -z_j )$, where $z_j:=\rho_{\epsilon_j}(x_j,v)Lv $.
	
 First we notice that, by Lemma \ref{continuity:rhoeps}, we have $\rho_i(x,v)=\rho(x,v)$, that is, $i\in T_x(v)$. Moreover, the functions $\bar{\theta}$,	$\underline{\theta}$, defined in \eqref{defin_thetafunctions},  are upper semicontinuous and lower semicontinuous, respectively. Therefore, we can assume (by passing to a subsequence) that $\alpha_j \to \alpha \in I(\rho(x,v),v)$.  Now, define $w_j^\ast:= \frac{  z_j- P_{\S_i(x_j)}(z_j) }{\|  z_j- P_{\S_i(x_j)}(z_j)\|   }$, $v^\ast_j :=  \frac{  x_j^\ast }{\|  z_j- P_{\S_i(x_j)}(z_j)\|   }$. Since, $w_j^\ast \in \mathbb{R}^{m}$ and it has unit norm, we can assume that the sequence converges, i.e., $w_j^\ast \to z^\ast $ for some $z^\ast \in \dl{S}^{m-1}$. Now, using the fact that $\S_i$ is locally Lipschitz-like around $(x_j,P_{\S_i(x_j)}(z_j))$, we have that the sequence $v^\ast_j$ is bounded, and from the fact that $\H$ is reflexive, we can assume that $v_j^\ast \rightharpoonup v^\ast$ for some $v^\ast \in \H^\ast$. Let us now prove that $z^\ast \in \nge_{\S_i(x)}(\rho(x,v)Lv)$ and $x^\ast\in  D^\ast \S_i(x,\rho(x,v)Lv)( -z^\ast) $.  Indeed, 
	\begin{enumerate}[label=\roman*)]
		\item $z^\ast \in \nge_{\S_i(x)}(\rho(x,v)Lv)$: Since $\S_i$ is closed and convex valued, we first observe that as a result of the optimality conditions defining the projection, we have 
		$z - P_{\S_i(x)}(z) \in \nge_{\S_i(x)}( P_{\S_i(x)}(z) )$ and as a result $w_j^* \in \nge_{\S_i(x_j)}( P_{\S_i(x_j)}(z_j) )$. 
		We can apply \cite[Corollary 1.96]{Mordukhovich_2006} to get that    $w^\ast_j\in \partial_z d(z_j, \S_i(x_j) )$  for all $j \in \mathbb{N}$. Now we have that for all $j \in \mathbb{N}$ and all  $w \in \mathbb{R}^m$
		
	\begin{align*}
		 \langle w_j^\ast, w- x_j \rangle  \leq d(w ,\S_i(x_j) )  - d(z_j, \S_i(x_j)).
	\end{align*}
		Hence, taking limits in the above inequality and recalling that the distance function is Lipschitz continuous (see  Lemma \ref{lemma:derivative}) we can conclude that   $  z^\ast \in  \partial_z d(\rho(x,v)Lv, \S_i(x ) )$. Finally, using again \cite[Corollary 1.96]{Mordukhovich_2006}, we get that $z^\ast \in  \nge_{\S_i(x)}(\rho(x,v)Lv)$.
		\item $v^\ast\in  D^\ast \S_i(x,\rho(x,v)Lv)( -z^\ast) $: First, by definition of coderivative,  we have that the sequence $(v^\ast_j, w_j^\ast) \in \nmord_{ \gph \S_i} (x_j , z_j)$. Moreover,  by \cite[Theorem 3.60]{Mordukhovich_2006}  the graph of the mapping $(u,v) \to \nmord_{ \gph \S_i}(u,v)$ is locally closed with respect to the  $\| \cdot\| \times w^\ast$-topology at $(x,\rho(x,v)Lv)$ provided that the SNC property holds at $(x,\rho(x,v)Lv)$. Since, $\S_i$ is Lipschitz-like around $(x,\rho(x,v)Lv)$ we can apply \cite[Proposition 1.68]{Mordukhovich_2006} to get that the mapping $\S_i$ is SNC at $(x,\rho(x,v)Lv)$  (see also \cite[Definition 1.67]{Mordukhovich_2006}). Therefore, we have that $(v^\ast, z^\ast) \in \nmord_{ \gph \S_i} (x, \rho(x,v)Lv)$, which by definition  means that $v^\ast\in  D^\ast \S_i(x,\rho(x,v)Lv )( -z^\ast) $.
	\end{enumerate}
Finally, by taking limits on \eqref{repr:y} we get that $y_v^\ast\in \mathcal{M}(x,v)$.
%Since, the space $\H$ is separable and reflexive we have that 

As a result, it is thus clear that  formula \eqref{eq:main:theorem} holds true. 
\end{proof}

\begin{remark}
It is important to emphasize that Lemma \ref{Transversal_Directions}  ensures that for $x$ close enough to $\bar{x}$ the mapping  $\mathcal{M}(x, v)$ is well-defined. Indeed, it excludes that there are some directions $v\in \mathcal{F}(x)$ such that   $\left\langle    z^\ast
				,Lv\right\rangle =0$    for  $
				z^\ast \in \nge_{\S_i(x)}(\rho(x,v)Lv)$.
Such directions are not transversal and constitute a complication as extensively discussed in \cite{vanAckooij_Perez-Aros_2020}. In the classic situation wherein $\S_i(x)$ is given by a set of convex inequalities, together with the assumption $0 \in \inte \S_i(x)$, the existence of non transversal directions can be altogether avoided - see \cite[Lemma 3]{Hantoute_Henrion_Perez-Aros_2017}. In this work, we have also shown that  no such directions exist using advanced tools of variational analysis.
\end{remark}

\begin{example}\label{Example41}
    Let us consider a 2-dimensional Gaussian random vector $\xi \sim \mathcal{N}(0 , I)$ and the set valued mapping $\mathcal{S}: \Re   \tto \Re^2$ given by  \begin{align}\label{MappingS:Example41}
    S(x) :=\{ (z_1,z_2 ) : (z_1+2)( z_2 +2) \geq x,\; z_1 \geq -2 \text{ and }  z_2 \geq -2 \}.
    \end{align}  Consider the probability function generated by this set-valued mapping in the sense of \ref{P00}, that is, $\varphi(x):= \mathbb{P}(\xi \in \mathcal{S}(x))$, 
 and let us examine   at the point  $\bar{x}=1$. We claim that it is possible to  find an appropriate neighbourhood of $\bar{x}$ such that the assumptions of Theorem  \ref{theo:Main}   hold. Indeed, it is easy to see that $\mathcal{S}$ has convex and closed values and $0\in \inte(\mathcal{S}(x))$ (for all $x$ close enough to $\bar{x}$). Moreover,    thanks to \cite[Corollary 4.35]{Mordukhovich_2006}, we can compute the coderivative of $\mathcal{S}$ for $(z_1 + 2) (z_2 + 2)  =x$ as 
    \begin{align}\label{computation_coderivative_example}
        D^\ast \mathcal{S}(x,z_1,z_2)(z^\ast) = \left\{  \frac{ \|z^*\| }{ \sqrt{  (z_1 + 2)^2 + (z_2 +2)^2         }    }  \right\}%\{x^\ast : (x^\ast, -z^\ast)= \lambda (1, -z_2 - 2, -z_1 - 2) \text{ for some  } \lambda \geq 0\} 
    \end{align}
    Furthermore, for $(z_1 +  2) (z_2 + 2)  >x$, we have that both $D^\ast \mathcal{S}(x,z_1,z_2)(0)=\emptyset$ and $D^\ast \mathcal{S}(x,z_1,z_2)(z^\ast)=\emptyset$. It shows the Lipschitz-like property required in Assumption \eqref{Assump_Setvalued}. Moreover, the above computation allows us to estimate $ \|\S(x)\| $. Indeed, consider arbitrary $z^\ast$ with  $\| z^\ast\|=1$ and $x^\ast \in  D^\ast \mathcal{S}(x,z_1,z_2)(z^\ast) $. Hence, by  \eqref{computation_coderivative_example}, we have that for  $(z_1 +  2) (z_2 + 2)  =x$
    \begin{align*}
        \| x^\ast\| =   \frac{ 1 }{ \sqrt{  (z_1 + 2)^2 + (z_2 +2)^2         }    } =  \frac{ 1 }{ \sqrt{  (z_1-z_2)^2    +2x     }    } \leq   \frac{ 1 }{ \sqrt{       x     }    }
    \end{align*}
    Therefore, the $\eta$-growth condition  holds at $\bar{x}=1$. 
\end{example}

\begin{corollary}\label{Corollary:theo:Main}
In the setting of Theorem \ref{theo:Main}, make the following supplementary assumptions:
\begin{enumerate}[label=\alph*)]
    \item the random vector $\xi$ has a continuous density function
    \item the set $T_{\bar x}(v)$ is a singleton $i(v)$ for $\mu_{\zeta}$ almost all $v \in \sph$
    \item the cone $\nge_{\S_{i(v)}(\bar x)}(\zep{}{\bar x}{v})$ contains a unique element $z_{\bar x}^*(v)$ of unit-norm and $D^\ast \S_{i(v)}(\bar x, \zep{}{\bar x}{v})(z_{\bar x}^*(v))$ is reduced to a single element $x_{\bar x}^\ast(v)$ for $\mu_{\zeta}$ almost all $v \in \sph$
    \item the space $\scr{X}$ is finite dimensional.
\end{enumerate}
Then the probability function is differentiable at $\bar x$ and the following formula holds
 \begin{equation*}
        \nabla \phi(\bar x) = \int_{\scr{F}(\bar x)} \theta(\rho(\bar x,v),v)  \frac{x_{\bar x}^\ast(v)}{\langle z_{\bar x}^\ast( v),   L v \rangle   } d\mu_{\zeta}(v). \label{eq:nablaphi}
    \end{equation*}
In addition,     if   the above conditions hold on a neighbourhood, $\phi$ is continuously differentiable on this neighbourhood.
   
%where $i(v)$ is the unique element of $T_x(v)$ for $\mu_{\zeta}$ almost all $v\in\sph$.
\end{corollary}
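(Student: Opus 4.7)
The plan is to read off the integrand of Theorem~\ref{theo:Main} under the supplementary assumptions, verify that it collapses to a singleton, and then upgrade the singleton subdifferential to classical differentiability via a finite-dimensional argument.

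First, I would show that under assumptions (a)--(c) the set $\mathcal{M}(\bar x, v)$ appearing in Theorem~\ref{theo:Main} is a singleton for $\mu_\zeta$-almost every $v \in \mathcal{F}(\bar x)$. Continuity of the density in (a) implies that $\theta$ defined in \eqref{transformationftorho} is continuous in $r$, so the gauge set $I_\theta(\rho(\bar x, v), v)$ introduced in \eqref{defin_Itheta} collapses to $\{\theta(\rho(\bar x, v), v)\}$. Assumption (b) fixes a unique active index $i(v) \in T_{\bar x}(v)$, and assumption (c) selects a unique unit normal $z_{\bar x}^*(v)$ together with a unique coderivative element $x_{\bar x}^*(v)$; by positive homogeneity of $D^*\mathcal{S}_{i(v)}$ the argument $-z_{\bar x}^*(v)$ entering the definition of $\mathcal{M}(\bar x, v)$ produces the unique vector obtained by flipping the sign of $x_{\bar x}^*(v)$, which is precisely absorbed into the outer minus of \eqref{eq:main:theorem}. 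Hence $\mathcal{M}(\bar x, v)$ is a Bochner-integrable single-valued selector, measurability and integrability being inherited from the continuity of $(x, v)\mapsto\rho(x, v)$ (Lemma~\ref{continuity:rhoeps}), of the projection, and of $\theta$, together with the uniform bound from Lemma~\ref{bounded:lemma}.

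Next, I would invoke Theorem~\ref{theo:Main} together with assumption (d) to drop the closure operator and deduce
\begin{equation*}
    \pmord \phi(\bar x) \subseteq \left\{ \int_{\mathcal{F}(\bar x)} \theta(\rho(\bar x, v), v) \frac{x_{\bar x}^*(v)}{\langle z_{\bar x}^*(v), Lv\rangle}\, d\mu_\zeta(v) \right\}.
\end{equation*}
Since $\phi$ is locally Lipschitz by Theorem~\ref{theo:Main} and $\mathcal{X}$ is finite-dimensional, $\pmord \phi(\bar x)$ is non-empty (standard property of the limiting subdifferential in finite dimensions, see \cite{Mordukhovich_2006}), so the above inclusion is an equality. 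In finite dimension $\partial^C \phi(\bar x) = \operatorname{co} \pmord \phi(\bar x)$, hence the Clarke subdifferential is also a singleton. By the classical fact that a locally Lipschitz function on $\mathbb{R}^n$ with a singleton Clarke (equivalently, limiting) subdifferential at a point is (strictly) Fréchet differentiable there, $\phi$ is differentiable at $\bar x$ with $\nabla \phi(\bar x)$ given by the displayed formula.

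For the final statement, assume the hypotheses hold on a neighbourhood $V$ of $\bar x$. The preceding argument then yields differentiability of $\phi$ at every $x \in V$ with $\nabla \phi(x)$ given by the analogous integral formula. Continuity of $x \mapsto \nabla \phi(x)$ on $V$ follows from the dominated convergence theorem: the joint continuity of $(x, v) \mapsto \rho(x, v)$ (Lemma~\ref{continuity:rhoeps}), continuity of the projection and of $\theta$, and outer semicontinuity of both the normal cone and $D^*\mathcal{S}_i$ (upgraded to continuity via the uniqueness in (c)) give pointwise convergence of the integrand, while Lemma~\ref{bounded:lemma} supplies a uniform dominating function. The main obstacle is the singleton verification for $\mathcal{M}(\bar x, v)$ with clean tracking of the sign between the coderivative argument and the outer minus in \eqref{eq:main:theorem}, together with measurability of the resulting selector; once this is in hand, the passage from a singleton limiting subdifferential to (continuous) differentiability is standard in finite dimension.
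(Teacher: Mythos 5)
Your proposal is correct and follows essentially the same route as the paper: apply Theorem \ref{theo:Main} without the closure (finite dimension), use continuity of the density to collapse $I_\theta$ to $\{\theta(\rho(\bar x,v),v)\}$, use (b)--(c) to make $\mathcal{M}(\bar x,v)$ a singleton, conclude equality from non-emptiness of $\pmord\phi(\bar x)$, and invoke the Clarke singleton-subdifferential criterion (Proposition 2.2.4 in \cite{Clarke_1987}) for (strict) differentiability. The only cosmetic difference is that for the last assertion the paper leans on the same Clarke result (upper semicontinuity of a singleton-valued Clarke subdifferential on a neighbourhood gives $C^1$), whereas you give an explicit dominated-convergence argument; both are fine.
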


\begin{proof}
We may employ Theorem \ref{theo:Main} to establish formula \eqref{eq:main:theorem}, without the closure operation since $X$ is finite dimensional. Moreover, the first supplementary assumption entails that $\alpha \in I_{\theta}(\rho(x,v),v)$ is unique and of the form $\alpha = \theta(\rho(x,v),v)$ (see e.g., the text slightly before Proposition 3.2 in \cite{vanAckooij_Perez-Aros_2022}). This together with the further assumptions entail that $\scr{M}(x,v)$ is a singleton for $\mu_{\zeta}$ almost all $v \in \sph$. As a result, the right-hand side of \eqref{eq:main:theorem} is a singleton, exactly that of \eqref{eq:nablaphi}. Given that $\partial \phi(x)$ is not empty we must in fact have equality in \eqref{eq:main:theorem}. Since $\phi$ is moreover locally Lipschitzian near $x$, we may invoke Proposition 2.2.4 in \cite{Clarke_1987} to arrive at the result.
\end{proof}
\begin{example}[Example \ref{Example41} revisited]
    Let us notice that due to \cite[Theorem 1.17]{Mordukhovich_2006} the normal cone to the set $\mathcal{S}(x)$ at any $z\in \bd(\mathcal{S}(x))$ (for $x$ close enough to $\bar{x}$) is given by 
    \begin{align*}
        \nmord_{\mathcal{S}(x)}(z)=\left\{  \lambda  \begin{pmatrix}  
         z_2 + 2\\ z_1 +2
        \end{pmatrix}: \lambda  \leq 0 \right\}.
    \end{align*}
    and thanks to \eqref{computation_coderivative_example} the coderivative $D^\ast \S(x,z)( z^\ast) $ reduces to one single element. Hence, by Corollary  \ref{Corollary:theo:Main} we have that the probabability function $\varphi$ given in Example \ref{Example41} is continuously differentiable around $\bar{x}$.
\end{example}

\subsection{Application to set-valued maps presented in the form of an inclusion}
In this section we present an application for the establishment of Lipschitz continuity and subgradient formul\ae\/ of probability functions generated by concrete constrained systems. In order to simplify the analysis we will assume that $\xi$ has continuous density $f_\xi$.  

Let us consider the probability function 
		\begin{equation}\label{probfunc2}
			\varphi(x):=\mathbb{P}\left( \omega \in \Omega : \Psi_i(x,\xi(\omega))
\in C_i \,,\forall i=1,\ldots,s \right) 
		\end{equation} 
where $\Psi_i:\mathbb{R}^n \times \mathbb{R}^m \to \mathbb{R}^q$ are continuously differentiable mappings and $C_i$ are closed sets such that $\{z:\Psi_i(\bar{x},z)\in C_i\}$ is convex. We can observe here that $C_i$ could for instance be a convex cone, such as the cone of positive definite matrices. The just given setting thus allows us to extend the classic analysis from e.g., \cite{Hantoute_Henrion_Perez-Aros_2017, vanAckooij_Henrion_2016} involving inequalities, to the situation of conic programming. 

We define the finite and infinite directions with respect to $\{\Psi_i,C_i\}$ as the sets defined by
	 \begin{eqnarray*}
	 	F_{\Psi_i}(\bar{x}) &:=\{v\in \mathbb{S}^{m-1}\ |\ \exists r\geq 0: \Psi_i(x,rLv)\in \bd C_i \}, \\
	I_{\Psi_i}(\bar{x}) &:=\{v\in \mathbb{S}^{m-1}\ |\ \forall r\geq 0: \Psi_i(x,rLv)\in \inte C_i\},
	 \end{eqnarray*}%
	 respectively. We also consider
  \begin{equation*}
      F_{\Psi}(\bar{x}):=\{v\in \mathbb{S}^{m-1}\ |\ \exists r\geq 0: \Psi_i(x,rLv)\in \bd C_i \text{ for some }i=1,\ldots,s\},
  \end{equation*}
  and the radial functions
  \begin{equation*}
      \rho_{\Psi_i}(x,v):=\sup\{r>0:\Psi_i(x,rLv)\in C_i\},
  \end{equation*}
  and 
  \begin{equation*}
      \rho_{\Psi}(x,v):=\min_{1\leq i\leq s}\rho_{\Psi_i}(x,v).
  \end{equation*}

	\begin{definition}[$\eta$-growth condition for generalized constraints systems]
	 	\label{def:growth:sujerctive_gradient} 
	 	Let $\eta: \Re
	 	\to  [0,+\infty)$ be as in Definition \ref{def:growth:setvalued}.
%
%	 	be a non-decreasing mapping such that
%	 	\begin{equation*}
%	 		\lim\limits_{\substack{\|z\|\to+\infty}}  \|z\|^m {f}_\xi(z) \eta(\|z\|)=0.
%	 	\end{equation*}
	We say that the family $\{\Psi_i,C_i\}_{i=1}^s$ satisfies
	the $\eta$-growth condition at $\bar{x}$ if
	 	for some $l >0$
	 	\begin{equation}\label{growth_condition:surjective_gradient}
   \nabla_z\Psi_i(x,z) \text{ is surjective and }
	  \| \nabla_x \Psi_i(x,z)\| \leq l\eta(\|z\|)  \kappa_i(x,z) 
	 	,  \;
	 		\forall x\in \mathbb{B}_{1/l} (\bar{x}), \;\; \forall  z\in \{z':\Psi_i(x,z')\in \bd C_i\},
	 	\end{equation}
	 	for all $i=1,\ldots,s$ and where $\kappa_i(x,z)=\inf\{\| \nabla_z \Psi_i(x,z)^\ast y^\ast\|:\|y^\ast\|=1\}$.
	 \end{definition}

	\begin{corollary}\label{Cor_const_systems}
	Let us suppose that  $\xi$ has continuous density function. Suppose that a point of interest $\bar{x}$  such that the family $\{\Psi_i,C_i\}$ 
		satisfies the $\eta$-growth
		condition given in Definition \ref{def:growth:sujerctive_gradient} at $\bar{x}$ and  $ \Psi_i(\bar   x,0)\in \inte (C_i)$ for all $i=1, \ldots,s$.  Moreover, suppose that   there exists a neighbourhood $U$ of $\bar{x}$  such that $\inte\{z : \Psi_i(x,z)\in C_i  \}=\{z:\Psi_i(x,z)\in \inte C_i\}$ for all $i=1,\ldots,s$  and all $x\in U$. Then the probability function \eqref{probfunc2} is locally Lipschitz around $x\in U'$ and
 \begin{align}
 \label{pmord_varphi_cor_main}
     \pmord \varphi(x)\subseteq \int\limits_{F_{\Psi}(x)}  	\bigcup _{i\in T_x(v)} 
 	    \left\{\frac{ \theta(\rho(x,v),v)  \nabla_x \Psi_i(x, \rho( x, v)Lv )^\ast y^\ast  }{  \langle \nabla_z \Psi_i(x, \rho( x, v)Lv  )^\ast y^\ast, Lv \rangle    }:
      \begin{array}{c}
        y^\ast\in \nmord_{C_i}(\Psi_i(x,\rho( x, v)Lv))\\
       \text{ and } \| y^\ast\|=1
      \end{array}
      \right\}  d\mu_\zeta(v) , 
 \end{align}
 for all  $x \in U'$ where $T_x(v)=\{i\in \{1,\ldots,s\}:\rho_{\Psi_i}(x,v)=\rho_{\Psi}(x,v)\}$.
 
\label{cor:appsystem}		
	\end{corollary}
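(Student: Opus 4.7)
The strategy is to reduce the statement to Theorem \ref{theo:Main} by viewing the constraint system as a set-valued mapping. Concretely, define
\begin{equation*}
\S_i(x) := \{z \in \Re^m : \Psi_i(x,z) \in C_i\},
\end{equation*}
so that \eqref{probfunc2} takes the form \eqref{P00}. The plan is: (i) verify Assumption \eqref{Assump_Setvalued} and the $\eta$-growth condition for the family $(\S_i)_{i=1}^s$ from the hypotheses on $(\Psi_i, C_i)$; (ii) translate the coderivative of $\S_i$ into objects defined through $\nabla_x\Psi_i$, $\nabla_z\Psi_i$ and $\nmord_{C_i}$; (iii) specialise the formula of Theorem \ref{theo:Main} and simplify.

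For step (i), convex values of $\S_i$ near $\bar x$ follow by combining the convexity hypothesis at $\bar x$ with the surjectivity of $\nabla_z\Psi_i$ and the interior consistency $\inte\{z:\Psi_i(x,z)\in C_i\} = \{z:\Psi_i(x,z)\in\inte C_i\}$, which propagates to a neighbourhood $U$; closedness of $\gph\S_i$ is automatic, and $0\in\S_i(x)$ for $x$ near $\bar x$ follows from $\Psi_i(\bar x,0)\in\inte C_i$ and continuity. The Lipschitz-like property around points of $\gph\S_i$ can be obtained via the Mordukhovich criterion once the coderivative is computed (step (ii)).

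For step (ii), surjectivity of $\nabla_z\Psi_i(x,z)$ implies surjectivity of the full Jacobian $(\nabla_x\Psi_i,\nabla_z\Psi_i)(x,z)$, so that by the normal cone chain rule for smooth preimages (e.g.\ \cite[Theorem 1.17]{Mordukhovich_2006}) applied to $\gph\S_i = \Psi_i^{-1}(C_i)$ one gets
\begin{equation*}
\nmord_{\gph\S_i}(x,z) = \bigl\{(\nabla_x\Psi_i(x,z)^{\ast}y^{\ast},\,\nabla_z\Psi_i(x,z)^{\ast}y^{\ast}) : y^{\ast}\in \nmord_{C_i}(\Psi_i(x,z))\bigr\}.
\end{equation*}
By the definition of the coderivative this yields
\begin{equation*}
D^{\ast}\S_i(x,z)(z^{\ast}) = \{\nabla_x\Psi_i(x,z)^{\ast}y^{\ast} : y^{\ast}\in \nmord_{C_i}(\Psi_i(x,z)),\ \nabla_z\Psi_i(x,z)^{\ast}y^{\ast} = -z^{\ast}\},
\end{equation*}
and, projecting on the $z$-component, $\nmord_{\S_i(x)}(z) = \nabla_z\Psi_i(x,z)^{\ast}\nmord_{C_i}(\Psi_i(x,z))$. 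The $\eta$-growth condition of Definition \ref{def:growth:setvalued} is then inherited: for $\|z^{\ast}\|=1$, a $y^{\ast}$ with $\nabla_z\Psi_i^{\ast}y^{\ast} = -z^{\ast}$ satisfies $\|y^{\ast}\|\le 1/\kappa_i(x,z)$, hence $\|\nabla_x\Psi_i^{\ast}y^{\ast}\|\le \|\nabla_x\Psi_i\|/\kappa_i(x,z) \le l\eta(\|z\|)$ by \eqref{growth_condition:surjective_gradient}. The radial function $\rho_{\Psi_i}$ also agrees with the $\rho_i$ of Section \ref{section:inner_enlargement} for this choice of $\S_i$, so the index set $T_x(v)$ matches the one in Theorem \ref{theo:Main}.

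For step (iii), since $\xi$ has a continuous density, the set $I_\theta(\rho(x,v),v)$ reduces to the single value $\theta(\rho(x,v),v)$. Substituting the coderivative expression into $\mathcal{M}(x,v)$ from Theorem \ref{theo:Main}, each unit-norm $z^{\ast}\in \nmord_{\S_i(x)}(z)$ writes as $\nabla_z\Psi_i^{\ast}y^{\ast}/\|\nabla_z\Psi_i^{\ast}y^{\ast}\|$ for some $y^{\ast}\in\nmord_{C_i}(\Psi_i(x,z))$; the associated $x^{\ast}$ is $\nabla_x\Psi_i^{\ast}y^{\ast}/\|\nabla_z\Psi_i^{\ast}y^{\ast}\|$. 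The normalising factor cancels in the ratio $x^{\ast}/\langle z^{\ast},Lv\rangle$, and by the positive homogeneity of $\nmord_{C_i}$ one may equivalently parametrise by $\|y^{\ast}\|=1$, yielding precisely the integrand in \eqref{pmord_varphi_cor_main}. Finiteness of $X$ disposes of the closure operation.

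The main obstacle is ensuring convex values of $\S_i$ on a full neighbourhood of $\bar x$ from a pointwise convexity hypothesis at $\bar x$; the surjectivity of $\nabla_z\Psi_i$ together with the interior-consistency assumption is what makes this propagation possible and simultaneously legitimises the coderivative chain rule and the non-triviality guaranteed by Lemma \ref{Transversal_Directions}. Once this is in place, every remaining step is a mechanical specialisation of Theorem \ref{theo:Main}.
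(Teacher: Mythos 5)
Your proposal follows essentially the same route as the paper's proof: define $\S_i(x)=\{z:\Psi_i(x,z)\in C_i\}$, verify Assumption \eqref{Assump_Setvalued} and the $\eta$-growth condition of Definition \ref{def:growth:setvalued}, compute $D^\ast\S_i$ through the smooth chain rule for $\gph\S_i=\Psi_i^{-1}(C_i)$ (the paper invokes Theorem 4.31 and Corollary 4.37(i) of Mordukhovich's book where you invoke the preimage normal-cone formula plus the coderivative criterion --- these are the same tools), bound $\|D^\ast\S_i(x,z)\|$ by $\kappa_i(x,z)^{-1}\|\nabla_x\Psi_i(x,z)\|$, and then specialize Theorem \ref{theo:Main}, using that $I_\theta(\rho(x,v),v)$ collapses to $\{\theta(\rho(x,v),v)\}$ for a continuous density and that the closure is superfluous since $\H=\Re^n$. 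Your normalization argument in step (iii) --- cancelling $\|\nabla_z\Psi_i(x,z)^\ast y^\ast\|$ in the quotient and reparametrising by $\|y^\ast\|=1$ via positive homogeneity of $\nmord_{C_i}$ --- is exactly the simplification the paper performs implicitly, and it is carried out correctly.

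The one step that does not hold up is your claimed propagation of convexity. Surjectivity of $\nabla_z\Psi_i$ together with the interior-consistency condition does \emph{not} force $\S_i(x)$ to be convex for $x$ near $\bar x$ when convexity is only assumed at $\bar x$: take $\Psi(x,z)=z_1^2-z_2-x z_1^4$ on $\Re\times\Re^2$ with $C=(-\infty,0]$; then $\nabla_z\Psi=(2z_1-4xz_1^3,-1)$ never vanishes, interior consistency holds everywhere, and $\{z:\Psi(0,z)\le 0\}$ is convex, yet $\{z:\Psi(x,z)\le 0\}$ is nonconvex for every $x>0$. The paper does not attempt such a propagation: it simply asserts convex-valuedness in one line, i.e.\ it implicitly reads the convexity hypothesis as holding on the whole neighbourhood $U$ (which is what Assumption \eqref{Assump_Setvalued}(c) actually requires at every $x\in U$). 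So you should either state the convexity of $\{z:\Psi_i(x,z)\in C_i\}$ for all $x$ near $\bar x$ as a hypothesis, or abandon the propagation argument; as written, that step would fail. Everything else in your proposal is a faithful reconstruction of the paper's proof.
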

	
\begin{proof}
Let us consider the set-valued mapping $\S_i(x):= \{z : \Psi_i(x,z)\in C_i  \}$. Let us check that $\S_i$ satisfies the assumptions   \eqref{Assump_Setvalued}. Indeed, it has convex values. Using  the   continuity of $\Psi_i$,  closedness of $C_i$ and  the fact that $0\in\inte \{z:\Psi_i(\bar{x},z)\in C_i\}$ we can conclude that $\S_i$ satisfies Items a) and c) in Assumption \eqref{Assump_Setvalued}.  Let us prove that $\S_i$ is locally Lipschitz-like around $(x,z)\in\gph \S_i$ with $x\in U$. Since, $\inte \S_i(x)=\{z:\Psi_i(x,z)\in \inte C_i\}$ for all $x\in U$  (shrinking $U$ if necessary), when $z\in\inte \S_i(x)$ we have that $(x,z)\in \inte(\gph \S_i)$ by continuity of $\Psi_i$ and in consequence $D^\ast \S_i(x,z)(z^\ast)$ is either $0\in \Re^n$ or empty for any $z^\ast\in \Re^m$. Moreover, when $(x,z)\notin \gph \S_i$,  $D^\ast \S_i(x,z)(z^\ast)$ is empty for any $z^\ast\in \Re^m$. Therefore, it is enough to prove the Lipschitz-like property around $(x,z)$ such that $x\in U$ and $z\in \bd \S_i(x)=\{z:\Psi_i(x,z)\in \bd C_i\}$. This follows from \cite[Corollary 4.37 (i)]{Mordukhovich_2006} upon noting that the conditions therein are satisfied by the surjectivity of $\nabla_z \Psi_i(x,z)$ given by \eqref{growth_condition:surjective_gradient}. Now, let us prove that $\S_i$ satisfies the $\eta$-growth
		condition for set-valued mappings at $\bar{x}$ given in Definition \ref{def:growth:setvalued}.   
  Similarly as before, it is enough to prove that there exists $l>0$ such that \eqref{thetagrowth2:setvalued} is satisfied for $x\in \mathbb{B}_{1/l}(\bar{x})$ and $z\in \bd \S_i(x)$. Since $\{\Psi_i,C_i\}$ satisfy the growth
	condition in Definition \ref{def:growth:sujerctive_gradient} at $\bar{x}$, we have that there exists $l>0$ such that $\{\Psi_i,C_i\}$ satisfy \eqref{growth_condition:surjective_gradient}. Taking $l >0$ large enough if necessary we can assume that $\mathbb{B}_{1/l}(\bar{x}) \subseteq U$. Hence, by \cite[Theorem 4.31]{Mordukhovich_2006}  we get that  for $x\in \mathbb{B}_{1/l}(\bar{x})$ and $z\in \bd \S_i(x)=\{z:\Psi_i(x,z)\in \bd C_i\}$ the following computation holds
		\begin{equation*}
 	    D^\ast\S_i(x,z)(-z^\ast)=\{x^\ast: (x^\ast,z^\ast)\in \nabla \Psi_i(x,z)^\ast \nmord_{C_i}(\Psi_i(x,z)) \}.
 	\end{equation*}
  Furthermore, by assumption on $\nabla_z \Psi_i(x,z)$ we have that 
  \begin{equation*}
      \|\nabla_z \Psi_i(x,z)^\ast y^\ast\|\geq \kappa_i(x,z) \|y^\ast\| \text{ for all } y^\ast\in \nmord_{C_i}(\Psi_i(x,z))
  \end{equation*}
  where $\kappa_i(x,z) \in (0,+\infty)$ as a result of \cite[Lemma 1.18]{Mordukhovich_2006}. Then
  \begin{equation*}
      \|D^\ast\S_i(x,z)\|\leq \max\{\| \nabla_x \Psi_i(x,z)^\ast y^\ast\|: \|y^\ast\|\leq \kappa_i(x,z)^{-1}\}\leq \kappa_i(x,z)^{-1}\|\nabla_x \Psi_i(x,z)\|
  \end{equation*}
 	and hence the $\eta$-growth
		condition for set-valued mappings follows by considering the same non-decreasing mapping $\eta$.  Therefore, since the family $\S_i$ satisfies the assumptions of Theorem \ref{theo:Main}, we have that 
 	 $\varphi$ is locally Lipschitz around $x\in U$.
 	Now let us verify \eqref{pmord_varphi_cor_main}. We have that $$\nge_{\S_i(x)}(\rho(x,v)Lv)=\{\nabla_z \Psi_i(x,\rho(x,v)Lv)^\ast y^\ast:y^\ast\in \nge_{C_i}(\Psi_i(x,\rho(x,v)Lv))\}$$ and
 	\begin{equation*}
 	    D^\ast\S_i(x,\rho(x,v)Lv)(-z^\ast)=\{x^\ast: (x^\ast,z^\ast)\in \nabla \Psi_i(x,\rho(x,v)Lv)^\ast \nmord_{C_i}(\Psi_i(x,\rho(x,v)Lv))\}.
 	\end{equation*}
 	Clearly $F_{\Psi_i}(x)=\mathcal{F}_i(x)$ entailing  $I_{\Psi_i}(x)=\mathcal{I}_i(x)$. And as was stated in \cite{vanAckooij_Perez-Aros_2022} when $f_\xi$ is continuous,  $\bar{\theta}=\theta$ and $I_{\theta}(\rho(x,v),v)=\{\theta(\rho(x,v),v)\}$. Hence \eqref{pmord_varphi_cor_main} follows from \eqref{eq:main:theorem}
 	where we omitted the closure operator due to $\H=\Re^n$.
\end{proof}	 
\begin{example}
    Let us consider   closed convex sets $C_i\subseteq \mathbb{R}^q$ with nonempty interior, and the separate variable functions $\Psi_i(x,z):= A_i(x) + B_iz$, where $A_i : \mathbb{R}^n \to \mathbb{R}^q$ is  a continuously differentiable mapping and $B_i \in \mathbb{R}^{q \times m} $ is a surjective matrix. Define the probability function 
    \begin{align}\label{Proba_example43}   
        \varphi(x):= \mathbb{P}\left(  \Psi_i(x,\xi) \in C_i \text{ for } i=1,\ldots, s\right),
    \end{align}
    where $\xi$ is a random vector with (continuous) density $f_\xi$ satisfying $\lim_{ \|z\| \to \infty} \|z\|^m f(z)=0$. The \emph{open mapping theorem} implies that \begin{align*}
     \inte\{z : \Psi_i(x,z)\in C_i  \}=\{z:\Psi_i(x,z)\in \inte C_i\} \text{ for all }x\in \mathbb{R}^n \text{ and all } i=1, \ldots,s.
    \end{align*} 
    Furthermore, let us consider  a point $\bar{x}$ such that $A_i(\bar{x}) \in \inte(C_i)$ for all $i=1,\ldots, s$. Hence, by the linearity of $B_i$ and convexity of $C_i$ it is easy to check that $\{z:\Psi_i(\bar{x},z)\in C_i\}$ is convex. Moreover, in this particular example the functions $\kappa_i$ are strictly positive constants  (see  \cite[Lemma 1.18]{Mordukhovich_2006}), so the $\eta$-growth condition for generalized constraints systems given in Definition \ref{def:growth:sujerctive_gradient} holds at $\bar{x}$ by considering $\eta(t)=1$ and $l>0$ large enough. Therefore, by Corollary \ref{Cor_const_systems} the probability function $\varphi$ given in \eqref{Proba_example43} is locally Lipschitz around $\bar{x}$.
\end{example}
\subsection{Application to set-valued maps given by quasi-convex inequalities}

Now, for $i=1,...,s$, we assume $g_i:\mathbb{R}^n \times \mathbb{R}^m \to \mathbb{R}$ being continuously differentiable and quasi-convex in $z$. Consider the probability function 
		\begin{equation}\label{probfunc3}
			\varphi(x):=\mathbb{P}\left( \omega \in \Omega : g_i(x,\xi(\omega))
			\leq 0,\,\forall i=1,\ldots,s \right).
		\end{equation}
Associated with a point of interest $\bar{x}$ such that $g_i(\bar{x},0)<0$, we define the finite and infinite directions with respect to $g$ as the sets defined by
	 \begin{eqnarray*}
	 	F_{g_i}(\bar{x}) &:=\{v\in \mathbb{S}^{m-1}\ |\ \exists r\geq 0:g_i(\bar{x},rLv)=0\}, \\
	I_{g_i}(\bar{x}) &:=\{v\in \mathbb{S}^{m-1}\ |\ \forall r\geq 0:g_i(\bar{x},rLv)< 0\},
	 \end{eqnarray*}%
	 respectively. We also consider
  \begin{equation*}
      F_{g}(\bar{x}):=\{v\in \mathbb{S}^{m-1}\ |\ \exists r\geq 0: g(\bar{x},rLv)=0\},
  \end{equation*}
  where $g(x,z)=\max\limits_{1\leq i\leq s} g_i(x,z)$ and the radial functions
  \begin{equation*}
      \rho_{g_i}(x,v):=\sup\{r>0:g_i(x,rLv)\leq 0\},
  \end{equation*}
  and 
  \begin{equation*}
      \rho_{g}(x,v):=\min_{1\leq i\leq s}\rho_{g_i}(x,v).
  \end{equation*}

\begin{definition}[$\eta$-growth condition for inequality systems]
	 	\label{def:growth:g_i:diff} 
	 	Let $\eta: \Re
	 	\to  [0,+\infty)$ be as in Definition \ref{def:growth:setvalued}.
%	 	
%	 	be a non-decreasing mapping such that
%	 	\begin{equation*}
%	 		\lim\limits_{\substack{\|z\|\to+\infty}}  \|z\|^m {f}_\xi(z) \eta(\|z\|)=0.
%	 	\end{equation*}
	We say that the family of continuously differentiable  mappings $\{g_i\}_{i=1}^s$ satisfies
	the $\eta$-growth condition at $\bar{x}$ if
	 	for some $l >0$
	 	\begin{equation*}
   %\label{growth_condition:g_i:diff}
   \nabla_zg_i(x,z)\not=0 \text{ and }
	 	\| \nabla_x g_i(x,z) \|\leq l\eta(\|z\|)\|\nabla_z g_i(x,z)\|
	 	,  \;
	 		\forall x\in \mathbb{B}_{1/l} (\bar{x}), \;\; \forall  z\in \{ z' : g_i(x,z')=0\}
	 	\end{equation*}
	 	for all $i=1,\ldots,s$.
	 \end{definition}

	\begin{corollary}\label{cor_main2}
	Let us suppose that  $\xi$ has continuous density function. Suppose that a point of interest $\bar{x}$ is such that   $g_i(\bar{x},0) <0$ for all $i=1,\ldots,s$ and that the family $g_i$ 
		satisfies the $\eta$-growth
		condition given in Definition \ref{def:growth:g_i:diff}  at $\bar{x}$.
		Assume, moreover, that  $\inte\{z\in\Re^m:g_i(x,z)\leq 0\}=\{z\in\Re^m:g_i(x,z)<0\}$ for all $i=1,\ldots,s$ and all $x$ in some neighbourhood of $\bar{x}$.
 Then the probability function \eqref{probfunc3} is locally Lipschitz around $x\in U'$ and
 \begin{align}
 \label{pmord_varphi_cor_main2}
     \pmord \varphi(x)\subseteq \int\limits_{ F_g(x)}  	\bigcup _{i\in T_x(v)} 
 	    \frac{ \theta(\rho_g(x,v),v)  \nabla_x g_i(x, \rho_g( x, v)Lv )  }{  \langle \nabla_z g_i(x, \rho_g( x, v)Lv  ), Lv \rangle    }  d\mu_\zeta(v) , \text{ for all } x \in U'
 \end{align}
 where $T_x(v)=\{i\in \{1,\ldots,s\}:\rho_{g_i}(x,v)=\rho_{g}(x,v)\}$. Furthermore, if $\# T_x(v)=1 $, a.s. $v\in F_g(x)$, for all $x\in U'$, then the probability function \eqref{probfunc3} is  continuously differentiable  for all $x\in U'$ and
		\begin{align}
		\label{gradient_varphi_cor_main}
			\nabla \varphi (x)= -\int\limits_{F_g(\bar x)} \theta(\rho(x,v),v) \frac{  \nabla_x g_{T_x(v)}(x, \rho_g( x, v)Lv  )  }{  \langle \nabla_z g_{T_x(v)}(x, \rho_g ( x, v)Lv ), Lv \rangle    }  	 d\mu_\zeta(v), \text{ for all } x \in U'.
		\end{align}
\label{cor:appsystem2}		
	\end{corollary}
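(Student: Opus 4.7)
The plan is to reduce Corollary \ref{cor_main2} to an application of Corollary \ref{Cor_const_systems} by recasting the inequality system as a generalized constraint system. Specifically, for each $i=1,\ldots,s$, set $\Psi_i := g_i$ (viewed as a $\Re^q$-valued mapping with $q=1$) and $C_i := (-\infty,0]$. Then $\{z:\Psi_i(x,z)\in C_i\}=\{z:g_i(x,z)\leq 0\}$ so that the two probability functions \eqref{probfunc2} and \eqref{probfunc3} agree. The convexity of $\{z:\Psi_i(\bar x,z)\in C_i\}$ follows from the quasi-convexity of $g_i(\bar x,\cdot)$, the strict interior condition $\Psi_i(\bar x,0)\in \inte C_i$ corresponds to $g_i(\bar x,0)<0$, and the interior equality $\inte\{z:\Psi_i(x,z)\in C_i\}=\{z:\Psi_i(x,z)\in \inte C_i\}$ is assumed by hypothesis.

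Next, I would verify the $\eta$-growth condition of Definition \ref{def:growth:sujerctive_gradient} for $\{\Psi_i,C_i\}$ from Definition \ref{def:growth:g_i:diff} for $\{g_i\}$. Since $\nabla_z\Psi_i(x,z)=\nabla_z g_i(x,z)\in\Re^{1\times m}$, surjectivity of $\nabla_z \Psi_i(x,z)$ onto $\Re$ is equivalent to $\nabla_z g_i(x,z)\neq 0$. The quantity $\kappa_i(x,z)=\inf\{\|\nabla_z\Psi_i(x,z)^\ast y^\ast\|:\|y^\ast\|=1\}$ reduces, via the adjoint identity $\nabla_z g_i(x,z)^\ast y^\ast=y^\ast\nabla_z g_i(x,z)$ for $y^\ast\in\Re$, to $\kappa_i(x,z)=\|\nabla_z g_i(x,z)\|$. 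Hence the bound $\|\nabla_x\Psi_i(x,z)\|\leq l\eta(\|z\|)\kappa_i(x,z)$ is exactly the growth condition in Definition \ref{def:growth:g_i:diff}. Corollary \ref{Cor_const_systems} therefore applies and yields the local Lipschitz continuity of $\varphi$ on some neighbourhood $U'$ of $\bar x$ together with the subgradient inclusion \eqref{pmord_varphi_cor_main}.

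To obtain \eqref{pmord_varphi_cor_main2}, I would specialise the right-hand side of \eqref{pmord_varphi_cor_main}. At a point $z=\rho_g(x,v)Lv\in\bd\{z:g_i(x,z)\leq 0\}$ one has $g_i(x,z)=0$, so $\Psi_i(x,z)=0\in\bd C_i$ and $\nmord_{C_i}(0)=\Re_+$. The only element of $\nmord_{C_i}(0)$ with $\|y^\ast\|=1$ is $y^\ast=1$, which gives $\nabla_x\Psi_i(x,z)^\ast y^\ast=\nabla_x g_i(x,z)$ and $\nabla_z\Psi_i(x,z)^\ast y^\ast=\nabla_z g_i(x,z)$. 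Substituting these expressions into \eqref{pmord_varphi_cor_main} yields precisely \eqref{pmord_varphi_cor_main2}, noting moreover that $\rho_{\Psi_i}=\rho_{g_i}$ and $F_\Psi(x)=F_g(x)$.

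For the continuously differentiable part, the assumption $\#T_x(v)=1$ $\mu_\zeta$-almost surely on $F_g(x)$ reduces the union in the integrand of \eqref{pmord_varphi_cor_main2} to a single vector for almost every $v$. Combined with the uniqueness of the unit normal $y^\ast=1$ above, all the singleton hypotheses of Corollary \ref{Corollary:theo:Main} are met (continuity of the density is assumed, and $\H=\Re^n$ is finite dimensional). Invoking that corollary on $U'$ delivers the continuous differentiability of $\varphi$ together with the gradient formula \eqref{gradient_varphi_cor_main}. The only delicate point in the argument is the verification that $\Psi_i,C_i$ inherit the $\eta$-growth condition from $g_i$, but as outlined above this is a routine translation once the adjoint of the scalar-valued linear map $\nabla_z g_i$ is identified.
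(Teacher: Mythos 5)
Your proposal matches the paper's own proof: both reduce the statement to Corollary \ref{Cor_const_systems} via $\Psi_i=g_i$ and $C_i=(-\infty,0]$, identify $\kappa_i(x,z)=\|\nabla_z g_i(x,z)\|$ and the unique unit normal $y^\ast=1$ to collapse \eqref{pmord_varphi_cor_main} into \eqref{pmord_varphi_cor_main2}, and then deduce continuous differentiability from the subdifferential reducing to a singleton when $\# T_x(v)=1$ almost surely. The only cosmetic difference is that the paper invokes \cite[Theorem 4.17]{Mordukhovich_2018} for the final step whereas you route through Corollary \ref{Corollary:theo:Main}; the underlying argument is the same.
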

	
\begin{proof}
It Follows from Corollary \ref{cor:appsystem} with $g_i=\Psi_i$ and $C_i=(-\infty,0]$. Here, it is easy to see that $\kappa_i(x,z)=\|\nabla_zg_i(x,z)\|$ and $N_{C_i}(g_i(x,z))=\{\lambda\geq0:\lambda g_i(x,z)=0\}$. Clearly $I_{g_i}(x)=I_{\Psi_i}(x)$ entailing  $F_{g_i}(x)=F_{\Psi_i}(x)$. Hence \eqref{pmord_varphi_cor_main2} follows from \eqref{pmord_varphi_cor_main}. Furthermore, when $\# T_x(v)=1$ a.s.  $v\in F_g(x)$, then the subdifferential of $\varphi$ reduces to a singleton, so \eqref{gradient_varphi_cor_main} follows due  to   \cite[Theorem 4.17]{Mordukhovich_2018}.
\end{proof}

\begin{example}
	Consider the separate variable function $g:\Re^m\times\Re^m\to\Re$ given by $g(x,z)=f(x)+\frac{1}{2}\ln(1+|c^\top z|^2)$ and $\xi\sim \mathcal{N}(0,R)$, where $h$ is a continuously differentiable function and $c\in \mathbb{R}^m\backslash\{0\}$. Hence,   the function $g$ is continuously differentiable and quasi-convex but not convex in $z$. Moreover, at a point $\bar{x}$ where $f(\bar{x}) <0$. We have that $g(\bar{x},0)<0$ and $\nabla_z g(x,z)=  \frac{c^\top z}{ 1+|c^\top z|^2 }c $, then
	\begin{align*}
	    \|\nabla_z g(x,z)\|  \geq  \left(  {e^{-2f(x)}- 1} \right)^{1/2} \| c\| \text{  for all } z \in  \{ z' : g(x,z')=0\} \text{ and } x \in  f^{-1}(-\infty,f(\bar{x})/2),
	    	\end{align*} 
	    	The above shows that the   $\eta$-growth condition for inequality systems  given in Definition \eqref{def:growth:g_i:diff} holds at $\bar{x}$. Furthermore, it is straightforward that 	  $\inte\{z\in\Re^m:g_i(x,z)\leq 0\}=\{z\in\Re^m:g_i(x,z)<0\}$ for all $x$ closed enough to $\bar{x}$. Consequently, by Corollary \ref{cor_main2} we have that the probability function 
	    	$	\varphi(x):=\mathbb{P}[  g (x,\xi )
			\leq 0 \}] $ is continuously differentiable at $\bar{x}$.
 
\end{example} 

\section{Application to Energy management}
\label{sec:applications}

Optimization problems involving probabilistic constraints have been popular in energy management problems. It is indeed so, since the latter constraints offer an intuitive representation of uncertainty. Depending on the context, various resolution strategies have been investigated, most of which not based on non-linear programming approaches trying to deal with the probability function ``as is". The classic non-linear programming approach has also been investigated for these problems and found useful. It is typically the case that very precise solutions can be found with it. Here precise means - feasible solutions offering a good balance between computational cost and conservativeness. Depending on the nature of possibly competing approximations, a difficult-to-find balance has to be found between computational cost and obtaining a feasible solution. This is especially true when the random vector is replaced with a discrete sample and the overarching problem with a combinatorial version of it, i.e., selecting the subsets of samples on which to enforce the constraints. Here, typically for small sample sizes, depending on the overall underlying structure fast computations can be achieved, but at the cost of yielding a typically unfeasible solution. In contrast, in order to achieve a ``feasible solution", a very large sample size is required. This increases computational time significantly, thus allowing for much room for approaches alternative to sampling to be competitive. This is particularly the case for the classic non-linear programming approach. Although here it is not our intention to provide a numerical assessment of alternatives, this discussion does show the interest for examining the non-linear programming approach. 

We will now present a possible structure arising in energy management that fits the overall scheme presented in this paper. To this end we will consider a set of time instants $t=1,...,T$ and energy system involving a (or several) classic generators generating $p^g_t$ (MW) at time $t$ as well as a wind turbine generating $p^w_t$ (MW). The system also has a customer load (MW) that needs to be satisfied at each time step. We will make the assumption that a wind turbine has a cubic dependency on underlying wind speed, see e.g., \cite{Bremer_Henrion_Moller_2015} and that it is reasonable to represent the latter using a Gaussian random vector. Similarly, customer load can be represented using a Gaussian random vector as well. We can thus assume that we are given $\xi \in \Re^{2T}$, where $\xi = (\xi_1,\xi_2)$ and $\xi_1 \in \Re^T$ represents wind speed, whereas $\xi_2 \in \Re^T$ represents customer load. It is particularly worthwhile to highlight that using this structure allows us to naturally represent the dependency between wind and load - recalling that this is meaningful since both depend on the underlying climate as load depends partially on temperature. We will now assume that the amount of power that the turbine generates has to be allocated to the satisfaction of load prior to actually observing the amount generated. Any excess generation will be assumed to be curtailed. Likewise, the amount of power generated through the conventional generator(s) has to be decided upon prior to observing load. The underlying inequality system will be of the form:
\begin{align}
    p^w_t & \leq c (z^1_t)^3, t=1,...,T \notag \\
    p^w_t &+ p^g_t \geq z^2_t, t=1,...,T \label{eq:thesystem} \\
    0 &\leq z^1_t,  t=1,...,T. \notag
\end{align}
Upon observing that the term $(z^1_t)^3$ can likewise be written as $(z^1_t)^4 / z^1_t$, one can equally write $p^w_t + \frac{c (z^1_t)^4}{-z^1_t} \leq 0$ as the first inequality. It is so that the given fraction is, as function of $z$, quasi-convex, resulting from the division of a convex map by a concave one. The last constraint, stating that wind-speed should be positive, ensures that the system is well defined on the relevant set of variables. The conventional generator can be assumed to have a proportional cost of generation $c^g_t$ in \euro $/ MW$. As a result we would be facing an optimization problem of the form:
\begin{align*}
    \min_{p^w,p^g} \quad & \sum_{t=1}^T c^g_t p^g_t \\
    \mbox{s.t.}\quad & \prb[ \eqref{eq:thesystem} \mbox{ holds with } \xi \mbox{ as } z] \geq p, \\
    & p^w \in P^W, p^g \in P^g,
\end{align*}
where $P^w$, $P^g$ represent classic constraints such as physical limitations on generation induced by the installed capacity. With the help of Corollary \ref{cor:appsystem2} a first order optimization approach for the latter problem can be set up. A particularly favorable approach is the recently developed DCWC algorithm \cite{Syrtseva_Oliveira_Demassy_vanAckooij_2023}. However, due to the non-convex nature of the just provided optimization problem, at best, a local solution, in fact a weaker so called critical point can be expected. This is the nature of the situation. 

We have set up a concrete instance of the given optimization problem. To this end we have taken $T=4$, $c=0.032$ (see \cite{Bremer_Henrion_Moller_2015}), $\mu = (\mu_1,\mu_2)$, with each component of $\mu_1$ having the value $4.23$ and each component of $\mu_2$ having the value $10$. The ``wind'' random variable is supposed to be auto-correlated with coefficient $0.96$ and the ``load" one with coefficient $0.8$. Wind and load themselves are negatively correlated with coefficient $-0.3$. The total $2T \times 2T$ correlation matrix thus has a specific $2 \times 2$ block structure where the $(1,2)$ block is the pointwise product of appropriate elements in the $(1,1)$ and $(2,2)$ block and $-0.3$. The $(2,1)$ block is set up symmetrically. The variances $v = (v_1, v_2)$ consists of two constant vectors, the first with elements $1.54$, the second the all-one vector. The cost $c^g$ is set as follows: $c^g = 5$. Finally the decision vector has a lower and upper bound, $0 \leq p^w_t \leq 8$, $0 \leq p^g_t \leq 20$. The probability level was taken to be $p=0.8$. 

The DCWC algorithm manages to solve the problem finding a high quality numerically almost feasible solution after about 130 iterations. The total number of iterations to reach convergence is 907. The found solution is found to exactly have a probability value of 0.8.

\section{Conclusions}

In this paper we have given gradient formul{\ae}   of probability functions of which one of the main ingredients is a set-valued application. We have managed to establish under reasonable assumptions generalized differentiability of the latter probability functions with respect to the external parameter playing the role of the argument of the set-valued maps. The given results allow us to extend previous works investigating first order properties of probability functions. Moreover we have put the resulting formul{\ae} to use within the context of a given algorithm. This demonstrates the practicality as well.

	 \appendix
	 
	 \section{Spherical radial decomposition for nonsmooth inequality constraints}
	 
	 The intention of this section is to provide an extension of \cite[Theorem 3.1]{vanAckooij_Perez-Aros_2022} to the infinite dimensional nonsmooth setting. 
	Given a family of functions $g_i \fc{\H
	 	\times \Re^m}{\Re}$ with $i=1,\ldots,s$, the probability function that  we consider here is
	 \begin{equation}\label{probfunc2:Appendix}
	 	\varphi(x):=\mathbb{P}[\{ \omega \in \Omega : g_i(x,\xi(\omega))
	 	\leq 0 \,\forall i=1,\ldots,s\}], 
	 \end{equation}
	 where $\xi $ is a $m$-dimensional random vector admitting a
	 density called $f_{\xi}$ with respect to the Lebesgue measure $\lambda_m$. Let us recall that $f_{\xi}$ is assumed bounded on
	 compact sets (see \eqref{eq:boundedassump:set}).
 
	 Using the spherical radial decomposition  we get that 
	 \begin{equation*}
	 %\label{probabilityinlikesphericalradial:Appendix}
	 	\varphi(x)= \int\limits_{\mathbb{S}^{m-1}} e(x,v)  d
	 	\mu_\zeta(v),
	 \end{equation*}
	 where $e: \H \times \mathbb{S}^{m-1} \to \mathbb{R}\cup
	 \{ \infty\}$ is the \emph{radial probability-like} function given
	 by
	 \begin{align*}
	 	e(x,v) = \displaystyle  \int\limits_{  \{ r\geq 0 :
	 		g(x, rLv)     \leq 0   \}      }   \theta(r,v)
	 	dr, \,\text{ where }
	 	g(x,z):=\max\{g_i(x,z):\,i=1,\ldots,s\},
	 \end{align*}
	 and $\theta$ is the \emph{density-like} function  defined in  \eqref{transformationftorho}.
	 Given a point of interest $\bar x$ we assume that on a neighborhood $U$ of $\bar{x}$ we have that $g_i$ is locally Lipschitz around any $(x,z)\in U\times \Re^m$, convex in the second variable for all $i=1,\ldots,s,$ and that
	  \begin{equation}\label{nonemptyinterior_Ass:Appendix}
	 	g_i(x,0) < 0, \quad \text{ for all } x\in U \text{ and }i=1,\ldots,s.
	 \end{equation}
	 For $x\in U$ we define the sets of
	 finite and infinite directions with respect to $g_i$ as
	 \begin{eqnarray}
	 	F_{g_i}(x) &:=\{v\in \mathbb{S}^{m-1}\ |\ \exists r\geq 0:g_i(x,rLv)=0\}, \label{appendix:finite_direction_sets} \\
	I_{g_i}(x) &:=\{v\in \mathbb{S}^{m-1}\ |\ \forall r\geq 0:g_i(x,rLv)< 0\}\label{appendix:infinite_direction_sets},
	 \end{eqnarray}%
	respectively. The finite and infinite directions with respect to $g$ can be defined analogously and in fact they identify with $F_g(x)=\cup_{i=1}^s F_{g_i}(x)$ and $I_g(x)=\cap_{i=1}^s I_{g_i}(x)$.
	
	Now, let us define the radial functions 
	\begin{align}
	\label{appendix:radial_function_i}
		\rho_{g_i} \left( x,v\right) &:=\sup\left\{  r>0 : g_i(x,rLv)\leq 0 \right\} \text{ for all } i=1,\ldots,s,
	\end{align}
	and
	\begin{align}
	\label{appendix:radial_function}
	    \rho_g(x,v)=\min\{\rho_{g_i}(x,v):i=1,\ldots,s\}.
	\end{align}
	
	Finally, let us denote the active set at $(x,v)$ with $\rho_g(x,v)<+\infty$ as
	\begin{equation*}
	    T_x(v):=\{i=1,\ldots,s: \rho_{g_i}(x,v)=\rho_{g}(x,v)\}.
	\end{equation*}
	 The following lemma corresponds to a generalization of \cite[Lemma 3.4]{vanAckooij_Perez-Aros_2022}.
	 
	  \begin{lemma}\label{estimate_of_pmord_rho:Appendix}
	  Let $x\in U$ be given, then we have that
	 \begin{equation}
	 \label{appendix:pmord:rho_i}
	     \pmord_x \rho_{g_i}(x,v)\subseteq -\cl{}\conv\left\{ \frac{1}{\langle z^\ast,Lv\rangle} x^\ast: (x^\ast,z^\ast)\in \pmord g_i(x,\rho_{g_i}(x,v)Lv)\right\} \text{ for all } i=1, \ldots, s \text{ and }  v\in F_{g_i}(x).
	 \end{equation}
	 Moreover, 
	 \begin{equation}\label{appendix:pmord:rho}
	     \pmord_x \rho_{g}(x,v)\subseteq -\cl{}\conv\left\{ \frac{1}{\langle z^\ast,Lv\rangle} x^\ast:
	     \begin{array}{c}
	     (x^\ast,z^\ast)\in \pmord g_i(x,\rho_g{}(x,v)Lv)\\
	   \text{ and }i\in T_x(v)\end{array}\right\}.
	 \end{equation}
	 \end{lemma}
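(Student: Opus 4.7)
The plan is to extend the finite-dimensional argument of \cite[Lemma 3.4]{vanAckooij_Perez-Aros_2022} to the present infinite-dimensional, nonsmooth setting, using the implicit identity $g_i(x,\rho_{g_i}(x,v)Lv)=0$ that holds for $v\in F_{g_i}(x)$. The cornerstone is a transversality estimate: for any $(x^\ast,z^\ast)\in\pmord g_i(x,\rho_{g_i}(x,v)Lv)$, the convex subgradient inequality for $g_i(x,\cdot)$, evaluated at $z=0$, yields
\begin{equation*}
g_i(x,0)\ \geq\ g_i(x,\rho_{g_i}(x,v)Lv)+\langle z^\ast,-\rho_{g_i}(x,v)Lv\rangle\ =\ -\rho_{g_i}(x,v)\langle z^\ast,Lv\rangle,
\end{equation*}
so $\langle z^\ast,Lv\rangle\geq -g_i(x,0)/\rho_{g_i}(x,v)>0$ by \eqref{nonemptyinterior_Ass:Appendix}. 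Consequently the denominators in \eqref{appendix:pmord:rho_i} are well-defined and uniformly bounded below on a neighbourhood of the point of interest.

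Next, I would establish local Lipschitz continuity of $\rho_{g_i}(\cdot,v)$ around $x$ whenever $v\in F_{g_i}(x)$. The transversality above provides a coercivity-type estimate in the radial variable $r$, which, combined with local Lipschitz continuity of $g_i$, allows a nonsmooth implicit function theorem to apply to the scalar equation $g_i(x,rLv)=0$ and yields the local Lipschitz property of the implicit solution $r=\rho_{g_i}(x,v)$.

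With local Lipschitz continuity in hand, the inclusion \eqref{appendix:pmord:rho_i} is obtained by differentiating the identity $x\mapsto g_i(x,\rho_{g_i}(x,v)Lv)\equiv 0$. Specifically, for $y^\ast\in\pmord_x\rho_{g_i}(x,v)$, the chain rule for locally Lipschitz compositions (or, equivalently, the normal cone calculus applied to the graph of $\rho_{g_i}(\cdot,v)$ viewed as the zero-level set of this composition) produces $(x^\ast,z^\ast)\in\pmord g_i(x,\rho_{g_i}(x,v)Lv)$ with $x^\ast+\langle z^\ast,Lv\rangle y^\ast=0$, whence $y^\ast=-x^\ast/\langle z^\ast,Lv\rangle$. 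The closed convex hull on the right-hand side absorbs the overestimation inherent to the nonsmooth chain rule (applied through fuzzy calculus or Clarke-type regularization) and, in the infinite-dimensional setting, any weak$^\ast$ closure that appears when passing to limits.

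Finally, \eqref{appendix:pmord:rho} follows from \eqref{appendix:pmord:rho_i} via the standard minimum rule for limiting subdifferentials of locally Lipschitz functions: since $\rho_g=\min_{1\leq i\leq s}\rho_{g_i}$ and $\rho_{g_i}(x,v)>\rho_g(x,v)$ for $i\notin T_x(v)$, the inactive indices do not contribute locally, so $\pmord_x\rho_g(x,v)\subseteq\bigcup_{i\in T_x(v)}\pmord_x\rho_{g_i}(x,v)$; inserting \eqref{appendix:pmord:rho_i} for each active index and using $\rho_{g_i}(x,v)=\rho_g(x,v)$ on $T_x(v)$ produces \eqref{appendix:pmord:rho}. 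The main technical obstacle is making the nonsmooth chain rule in the third step rigorous in infinite dimensions, which requires a suitable SNC property at $(x,\rho_{g_i}(x,v)Lv)$; this is secured by the local Lipschitz continuity of $g_i$ and the transversality estimate of the first paragraph, both of which prevent pathological weak$^\ast$ behaviour when extracting limits from Fréchet subgradients at nearby points.
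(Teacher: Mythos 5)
Your opening transversality estimate ($\langle z^\ast,Lv\rangle\geq -g_i(x,0)/\rho_{g_i}(x,v)>0$, which requires first passing from the joint limiting subgradient $(x^\ast,z^\ast)\in\pmord g_i$ to $z^\ast\in\partial_z g_i(x,\cdot)$, the fact the paper imports from its cited Lemma 4.8) and your final step (the minimum rule $\pmord(\min_i f_i)(x)\subseteq\bigcup_{i\in T_x(v)}\pmord f_i(x)$, which is exactly \cite[Proposition 1.113]{Mordukhovich_2006} as used in the paper to get \eqref{appendix:pmord:rho} from \eqref{appendix:pmord:rho_i}) both match the paper. The core step, however, has a genuine gap. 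Differentiating the identity $G(x):=g_i(x,\rho_{g_i}(x,v)Lv)\equiv 0$ gives $\pmord G(x)=\{0\}$ together with a chain-rule \emph{upper} estimate for $\pmord G(x)$; combining these only shows that \emph{some} pair $(x^\ast,z^\ast)$ and \emph{some} subgradient of $\rho_{g_i}(\cdot,v)$ satisfy $x^\ast+\langle z^\ast,Lv\rangle y^\ast=0$ — it does not represent \emph{every} $y^\ast\in\pmord_x\rho_{g_i}(x,v)$, which is the inclusion you need. The alternative you mention, normal-cone calculus on $\gph \rho_{g_i}(\cdot,v)$ viewed as the zero-level set of $(x,r)\mapsto g_i(x,rLv)$, hits the asymmetry of the limiting subdifferential: since every $(x^\ast,z^\ast)\in\pmord g_i$ has $\langle z^\ast,Lv\rangle>0$, the vector $(y^\ast,-1)$ needed to characterize $\pmord_x\rho_{g_i}$ can only arise from \emph{negative} multipliers $\lambda$, and $\pmord(\lambda g_i)=|\lambda|\,\pmord(-g_i)$ with $\pmord(-g_i)\neq-\pmord g_i$ for a nonconvex Lipschitz function; this route therefore produces a formula in terms of $\pmord(-g_i)$, not the asserted one.

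The paper avoids both obstacles by a different mechanism: it applies the mean value inequality \cite[Corollary 3.51]{Mordukhovich_2006} to $g_i$ along the segment joining $(x,\rho_{g_i}(x,\bar v)L\bar v)$ and $(x+t_kw,\rho_{g_i}(x+t_kw,\bar v)L\bar v)$ (both function values vanish), which for each \emph{fixed direction} $w$ yields $\langle y^\ast,w\rangle\leq \tfrac{-1}{\langle z^\ast,L\bar v\rangle}\langle x^\ast,w\rangle$ for some $(x^\ast,z^\ast)\in\pmord g_i(x,\rho_{g_i}(x,\bar v)L\bar v)$ \emph{depending on} $w$; the support-function comparison \cite[Theorem 2.4.14 (vi)]{Zalinescu_2002} then places $y^\ast$ in the closed convex hull. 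The convex hull in the statement is thus not slack that ``absorbs overestimation'': it is precisely the price of the direction-dependent choice of $(x^\ast,z^\ast)$, and your sharper non-convexified representation $y^\ast=-x^\ast/\langle z^\ast,Lv\rangle$ is not what either argument delivers. A Clarke-type implicit function theorem could be made to work — an element of $\cl{}\conv\pmord g_i$ with components $(\sum_j\mu_jx_j^\ast,\sum_j\mu_jz_j^\ast)$ yields $\tfrac{\sum_j\mu_jx_j^\ast}{\sum_j\mu_j\langle z_j^\ast,Lv\rangle}$, which is again a convex combination of the $x_j^\ast/\langle z_j^\ast,Lv\rangle$ since all denominators are positive — but that theorem is essentially finite-dimensional as usually stated and in any case lands exactly on the convexified set. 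Finally, local Lipschitz continuity of $\rho_{g_i}(\cdot,v)$ is not needed a priori: the paper works directly with Fréchet subgradients of $\rho_{g_i}$ at nearby points and only uses continuity of $\rho_{g_i}$ from \cite[Lemma 1]{Hantoute_Henrion_Perez-Aros_2017}.
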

	 
	 %-----QUESTION-------
	 %Why not use \cite[Proposition 2]{Hantoute_Henrion_Perez-Aros_2017}?
	 %--------------------
	 %We would have: If $y^\ast\pmord \rho(x,v) then there exist $x_n\to x$ and $y_n^\ast y^\ast$ such that
	 %\begin{equation*}
	 %\langle y_n^\ast,w\rangle\leq \frac{-1}{\langle z_n^\ast,L\bar{v}\rangle}\langle x_n^\ast,w\rangle.
	 %\end{equation*}
	 %with $(x_n^\ast,z_n^\ast)\in \pclarke_x g(x_n,\rho(x_n,v)Lv+\bar{z})\times \partial_z g(x_n,\rho(x_n,v)Lv+\bar{z})$.
	 %The problem is the point $(x_n,\rho(x_n,v)Lv+\bar{z})$ which have both variables depending on $n$, so we cannot conclude with a weak∗-closedness of the graph of Clarke’s subdifferential argument to both subdifferentials by separate and also not as whole, since it is not true that $\pclarke_x g(x_n,\rho(x_n,v)Lv+\bar{z})\times \partial_z g(x_n,\rho(x_n,v)Lv+\bar{z})\subset \pclarke g(x_n,\rho(x_n,v)Lv+\bar{z})$.
	 %------------------
	 \begin{proof}
	 Fix $\bar{x}\in U$, $i\in \{1,\ldots,s\}$ and $\bar{v}\in F_{g_i}(\bar{x})$. To obtain \eqref{appendix:pmord:rho_i} let us first prove that for every $y^\ast\in \pmord_x\rho_{g_i}(\bar{x},\bar{v})$ and every $w\in \H$, there exists $(x^\ast,z^\ast)\in \pmord g_i(\bar{x},\rho_{g_i}(\bar{x},\bar{v})L\bar{v})$ such that
	 \begin{equation*}
	     \langle y^\ast,w\rangle\leq \frac{-1}{\langle z^\ast,L\bar{v}\rangle}\langle x^\ast,w\rangle.
	 \end{equation*}
	 
	 By continuity of $\rho_{g_i}$ (see \cite[Lemma 1]{Hantoute_Henrion_Perez-Aros_2017}) and since $g_i$ are locally Lipschitz, there exists $\epsilon'>0$ such that for every $x\in \mathbb{B}(\bar{x},\epsilon')$ and $z\in \mathbb{B}(\rho_{g_i}(\bar{x},\bar{v})L\bar{v},\epsilon')$ we have that
	 \begin{equation}\label{constants1:appendix}
	 \rho_{g_i}(x,\bar{v})\leq M
	 \text{, }
	 g_i(x,0)\leq -\gamma
	 \end{equation}
	 and
	 \begin{equation}\label{constants2:appendix}
	 \pmord g_i(x,z)\subseteq r\mathbb{B}^\ast
	 \end{equation}
	 for some constants $\gamma,M,r>0$.

    We claim that for every $y^\ast\in \pfrech_x\rho_{g_i}(x,\bar{v})$ with $x\in \mathbb{B}(\bar{x},\epsilon'/2)$ and every $w\in \H$  there exists $(x^\ast,z^\ast)\in \pmord  g_i(x,\rho_{g_i}(x,\bar{v})L\bar{v})$ such that
	  \begin{equation*}
	      \langle y^\ast,w\rangle\leq \frac{-1}{\langle z^\ast,L\bar{v}\rangle}\langle x^\ast,w\rangle.
	  \end{equation*}
	  To see this, let $w\in \H$ and consider $t_k\to 0^+$ such that 
	  \begin{equation*}
	      x+t_kw\in \mathbb{B}(\bar{x},\epsilon'/2) \,\text{ and }\, \rho_{g_i}(x+t_kw,\bar{v})L\bar{v}\in \mathbb{B}(\rho_{g_i}(x,\bar{v})L\bar{v},\epsilon'/2), \text{ for all }k.
	  \end{equation*}
	  Applying the mean value inequality in \cite[Corollary 3.51]{Mordukhovich_2006} we get
	  \begin{equation*}
	   g_i(x+t_kw,\rho_{g_i}(x+t_kw,\bar{v})L\bar{v})-g_i(x,\rho_{g_i}(x,\bar{v})L\bar{v})\leq -t_k\langle x_k^\ast,w\rangle+
	      [\rho_{g_i}(x,\bar{v})-\rho_{g_i}(x+t_kw,\bar{v})]\langle z_k^\ast,L\bar{v}\rangle,
	  \end{equation*}
	  for some 
	  \begin{equation*}
	  %\label{pmord_g_meanvalue:Appendix}
	      (x_k^\ast,z_k^\ast)\in \pmord g_i(x_k,z_k)
	  \end{equation*}
	   with
	   \begin{equation*}
	      x_k\in [x+t_kw,x)\text{ and }z_k\in \left[\rho_{g_i}(x+t_kw,\bar{v})L\bar{v},\rho_{g_i}(x,\bar{v})L\bar{v}\right).
	   \end{equation*}
	  Hence, taking into account, from the definition of $\rho_i$, that
	  \begin{equation*}
	     g_i(x,\rho_{g_i}(x,\bar{v})L\bar{v})=0\,
	     \text{ and }\,
	     g_i(x+t_kw,\rho_{g_i}(x+t_kw,\bar{v})L\bar{v})=0,
	  \end{equation*}
	  it follows that
	  \begin{equation*}
	    [\rho_{g_i}(x+t_kw,\bar{v})-\rho_{g_i}(x,\bar{v})]\langle z_k^\ast,L\bar{v}\rangle\leq -t_k\langle x_k^\ast,w\rangle.
	  \end{equation*}
	  Now, by \cite[Lemma 4.8]{https://doi.org/10.48550/arxiv.2112.05571}, $z_k^\ast \in \partial_z g_i(x_k,\rho_{g_i}(x_k,\bar{v})L\bar{v})+\bar{z})$ and hence, by \cite[Lemma 3]{Hantoute_Henrion_Perez-Aros_2017} together with  \eqref{constants1:appendix}, we obtain that $\langle z_k^\ast, L\bar{v}\rangle>0$. Therefore,
	  \begin{equation*}
	    \frac{\rho_{g_i}(x+t_kw,\bar{v})-\rho_{g_i}(x,\bar{v})}{t_k}\leq \frac{-1}{\langle z_k^\ast,L\bar{v}\rangle}\langle x_k^\ast,w\rangle.
	  \end{equation*}
	   Considering $\epsilon_k\to0^+$ with $\epsilon_k<\epsilon'/2$ for all $k$, by definition of the basic subdifferential, we have that there exists $(\hat{x}^\ast_k,\hat{z}^\ast_k)\in \pfrech g_i(\hat{x}_k,\hat{z}_k)$ with
	   $\|\hat{x}_k-x_k\|\leq \epsilon_k$,
	   $\|\hat{z}_k-z_k\|\leq \epsilon_k$, $\|\hat{z}^*_k-z^*_k\|\leq \epsilon_k$ and such that
	   \begin{equation}\label{eq1:lemma:appendix}
\langle z_k^\ast,L\bar{v}\rangle	    \frac{\rho_{g_i}(x+t_kw,\bar{v})-\rho_{g_i}(x,\bar{v})}{t_k}\leq -(\langle \hat{x}_k^\ast,w\rangle -\epsilon_k).
	  \end{equation}
	    Now using \eqref{constants2:appendix}, we have that $\|\hat{x}^\ast_k\|\leq r$ and $\|\hat{z}^\ast_k\|\leq r$. Since $\H$ is reflexive there exists a subsequence $(\hat{x}^\ast_{n_k},\hat{z}^\ast_{n_k})$ and some $(x^*,z^*)\in\H\times\mathbb{R}^{m}$ such that $x^*_{n_k}\rightharpoonup_k x^\ast$ and $z^*_{n_k}\rightarrow z^*$. By definition of the basic sub-differential $(x^\ast,z^\ast)\in \pmord g_i(x,\rho_{g_i}(x,\bar{v})L\bar{v})$. Again by \cite[Lemma 4.8]{https://doi.org/10.48550/arxiv.2112.05571}, $z^\ast \in \partial_z g_i(x,\rho_{g_i}(x,\bar{v})L\bar{v}))$. Hence, by \cite[Lemma 3]{Hantoute_Henrion_Perez-Aros_2017} together with  \eqref{constants1:appendix}, we have that $\langle z^\ast,L\bar{v}\rangle>0$. Thus, by taking the inferior limit in \eqref{eq1:lemma:appendix}, we conclude the proof of the claim by recalling the definition of the regular subdifferential.
	  
	  Now, let $y^\ast\in \pmord _x\rho_{g_i}(\bar{x},\bar{v})$. Then there exist $y_l^\ast \to y^\ast$ and $x_l\to \bar{x}$ with $y^\ast_l\in \pfrech_x \rho_{g_i}(x_l,\bar{v})$. For $l$ large enough such that
	  \begin{equation*}
	      \|x_l-\bar{x}\|\leq \epsilon'/2\, \text{ and }\,
	      \|\rho_{g_i}(x_l,\bar{v})L\bar{v}-\rho_{g_i}(\bar{x},\bar{v})L\bar{v}\|\leq \epsilon'/2, 
	  \end{equation*}
	  we apply the claim proved above to obtain that there exists $(x_l^\ast,z_l^\ast)\in \pmord  g_i(x_l,\rho_{g_i}(x_l,\bar{v})L\bar{v})$ such that
	  \begin{equation*}
	      \langle y_l^\ast,w\rangle\leq \frac{-1}{\langle z_l^\ast,L\bar{v}\rangle}\langle x_l^\ast,w\rangle.
	  \end{equation*}
	  By definition of the basic subdifferential and considering $\epsilon_l\to0^+$ with $\epsilon_l<\epsilon'/2$ for all $l$,  there exists $(\hat{x}_l^\ast,\hat{z}_l^\ast)\in \pfrech g_i(\hat{x}_l,\hat{z}_l)$ with
	  $\|\hat{x}_l-x_l\|\leq \epsilon_l$,
	  $\|\hat{z}_l-\rho_{g_i}(x_l,\bar{v})L\bar{v}+\bar{z}\|\leq \epsilon_l$, $\|\hat{z}^*_l-z^*_l\|\leq\epsilon_l$ and such that
	  \begin{equation}\label{eq2:lemma:appendix}
	      \langle y_l^\ast,w\rangle\leq \frac{-1}{\langle z_l^\ast,L\bar{v}\rangle}(\langle \hat{x}_l^\ast,w\rangle-\epsilon_l).
	  \end{equation}
	   Again, using \eqref{constants2:appendix}   we obtain that (under subsequence) $(\hat{x}_l^\ast,\hat{z}_l^\ast)\rightharpoonup_l (x^\ast,z^\ast)\in \pmord g_i(\bar{x},\rho_{g_i}(\bar{x},\bar{v})L\bar{v})$. Therefore, letting $l\to\infty$ in \eqref{eq2:lemma:appendix}, we conclude that
	   \begin{equation*}
	     \langle y^\ast,w\rangle\leq \frac{-1}{\langle z^\ast,L\bar{v}\rangle}\langle x^\ast,w\rangle
	 \end{equation*}
	 for some $(x^\ast,z^\ast)\in \pmord g_i(\bar{x},\rho_{g_i}(\bar{x},\bar{v})L\bar{v})$. Let us notice that this last result implies that
	 \begin{equation*}
	     \langle y^\ast,w\rangle\leq \sigma_{\mathcal{A}(\bar{x},\bar{v})}(w)\text{ for all } y^\ast\in \pmord_x\rho_{g_i}(\bar{x},\bar{v}) \text{ and for all } w\in \H
	 \end{equation*}
	 where
	 \begin{equation*}
	     \mathcal{A}(\bar{x},\bar{v}):=\left\{\frac{-1}{\langle z^\ast,L\bar{v}\rangle} x^\ast : (x^\ast,z^\ast)\in \pmord g_i(\bar{x},\rho_{g_i}(\bar{x},\bar{v})L\bar{v}) \right\}.
	 \end{equation*}
	 Therefore, $\sigma_{\pmord_x\rho_{g}(\bar{x},\bar{v})}(w)\leq \sigma_{\mathcal{A}(\bar{x},\bar{v})}(w)$, for all $w\in \H$, which entails \eqref{appendix:pmord:rho_i} due to \cite[Theorem 2.4.14 (vi)]{Zalinescu_2002}. Finally, \eqref{appendix:pmord:rho} follows from \cite[Proposition 1.113]{Mordukhovich_2006}.
	 \end{proof}
	 
	 Let us recall the function $I_\theta : \Re_+ \times
	\sph \tto \Re_+$ given in \eqref{defin_Itheta} by
	\begin{equation}\label{defin_Itheta:Appendix}
		I_{\theta}(r,v) := [\underline{\theta}(r,v),\overline{\theta}_+(r,v)]
		\cup[\underline{\theta}^-(r,v),\overline{\theta}(r,v) ],
	\end{equation}
	where, $\overline{\theta}$, ${\bar{\theta}}_{+}$, $  \underline{\theta}$ and $ \underline{\theta}^{-}$ were defined in   \eqref{defin_thetafunctions}. 
\iffalse
	\begin{equation}\label{defin_thetafunctions:Appendix}
		\begin{aligned}
			\overline{\theta}(r,v)&:=\inf\left\{ k  > 0 : \exists \epsilon >0 \text{ such  that } \theta(u,v) \leq k \text{ a.e. for }  u \in  [r-\epsilon,r+\epsilon]           \right\},\\
			{\bar{\theta}}_{+}(r,v)&:=\inf\{ k  > 0 : \exists \epsilon >0 \text{ such  that } \theta(u,v) \leq k \text{ a.e. for }  u \in  [r,r+\epsilon]           \},\\
			\underline{\theta}(r,v)&:=\sup\{ k  > 0 : \exists \epsilon >0 \text{ such  that } \theta(u,v) \geq k \text{ a.e. for }  u \in  [r-\epsilon,r+\epsilon]          \},\\
			\underline{\theta}^{-}(r,v)&:=\sup\{ k  > 0 : \exists \epsilon >0 \text{ such  that } \theta(u,v) \geq k \text{ a.e. for }  u \in  [r-\epsilon,r]           \}.\\
		\end{aligned}
	\end{equation}
	\fi
	As deduced in \cite{vanAckooij_Perez-Aros_2022}, property \eqref{eq:boundedassump:set} implies that
	\begin{equation}\label{overline_theta_finite:Appendix}
	    I_\theta(r,v)\subseteq [0,\overline{\theta}(r,v)]\subseteq [0,+\infty).
	\end{equation}
Furthermore, from the definition of $\overline{\theta}(r,v)$, we have
\begin{equation}\label{overline_theta_bound:Appendix}
    \overline{\theta}(r,v)\leq M_\theta, \,\forall r\leq M,\text{ and }v\in\mathbb{S}^{m-1}
\end{equation}
where $M_\theta$ is the (finite) constant defined by
\begin{equation*}
    M_\theta:=\esssup_{(r,v)\in [0,M+1]\times \mathbb{S}^{m-1}}\overline{\theta}(r,v).
\end{equation*}
	 
	 \begin{definition}[$\eta_{\theta}$-growth condition]
	 	\label{def:thetagrowth:Appendix} Consider $ \bar{x} \in U$ and $\bar{v} \in
	 	I_g( \bar{x})$. Let $\eta_{\theta} : \Re\times \mathbb{S}^{m-1}
	 	\to  [0,+\infty]$  be a mapping such that
	 	\begin{equation*}
	 		\lim\limits_{\substack{x \to \bar{x} \\
	 				v \to \bar{v} } }  \rho_g(x,v) \bar{\theta} ( \rho_g(x,v),v) \eta_{\theta}( \rho_g(x,v),v) =0.
	 	\end{equation*}
	 	%\begin{equation*}
	 	%	\lim\limits_{\substack{r \to +\infty \\
	 	%			v \to \bar{v} } }  r \bar{\theta} (r,v) \eta_{\theta}(r,v) =0.
	 	%\end{equation*}
	 	
	 	We say that the family of mappings $\{g_i\}_{i=1}^s$ satisfies
	 	the $\eta_{\theta}$-growth condition at $(\bar{x}, \bar{v})$ if
	 	for some $l >0$
	 	\begin{equation*}
	 	%\label{growth_condition:Appendix}
	 		\| \pi_x(\pmord g_i(x,\rho_g(x,v)Lv)) \| \leq l \eta_{\theta}(\rho_g(x,v),v),  \;
	 		\forall (x,v) \in \mathbb{B}_{1/l} (\bar{x}) \times
	 		\mathbb{B}_{1/l} (\bar{v}), \;\; v\in F(x)\;\; \text{and}\;\; i\in T_x(v); 
	 	\end{equation*}
	 	%\end{equation}
	 	where 
	 	$\| \pi_x(\pmord g_i(x,\rho_{g_i}(x,v)Lv)) \|:=\sup\{ \| x^\ast\| : (x^\ast ,z^\ast)\in \pmord g_i(x,\rho_{g_i}(x,v)Lv)$ for some $z^\ast$\}.
	 \end{definition}

	 \begin{theorem}
	 	\label{thm:clarkeappli:Appendix} 
	 	Let $\bar{x} \in \H $ be given and assume that the family of mappings $\{g_i\}_{i=1}^s$ satisfies the $\eta_{\theta}$-growth
	 	condition at $(\bar{x},v)$ for all $v\in I_g(\bar{x})$ and that  \eqref{nonemptyinterior_Ass:Appendix}
	 	holds true.	Then the probability function \eqref{probfunc2:Appendix} is locally
	 	Lipschitz around $\bar x$ and
	 	\begin{align}\label{inclusion:subdif2}
	 		\pmord \varphi (\bar{x})\subseteq -\cl{w^\ast}\left(  \int\limits_{\mathbb{S}^{ m-1}}  \mathcal{G}(\bar{x},v)    d\mu_\zeta(v) \right)
	 	\end{align}
	 	where 
	 	\begin{align*}
	 	%\label{appendix:inclusion_e}
	 		\mathcal{G}(x,v)  &=\left\{ \begin{array}{cr}
	 		   \left\{   \frac{ \alpha  
	 		}{%
	 			\left\langle z^\ast
	 			,Lv\right\rangle } x^\ast  : \begin{array}{c}
	 	 (x^\ast,z^\ast) \in  \pmord g_i(x, \rho_g \left( x,v\right) Lv) \\ i\in T_{x}(v),\,
	 	 \alpha \in I_{\theta}(\rho_g(x,v),v) 
	 		\end{array}        \right\}, &\text{ if }    v\in F_g(x)\\ \vspace{-0.2cm}& \\
	 		\{0\},     &\text{ if }    v\in I_g(x),
	 		\end{array}     \right.
	 	\end{align*}
	 	with $I_{\theta} $   given by \eqref{defin_Itheta:Appendix}. Moreover, the closure operator can be omitted in \eqref{inclusion:subdif2}  if $\H$ is finite-dimensional.
	 \end{theorem}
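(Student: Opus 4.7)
The plan is to exploit the spherical radial decomposition, which allows us to write
\begin{equation*}
\varphi(x) = \int_{\mathbb{S}^{m-1}} e(x,v)\, d\mu_\zeta(v), \qquad e(x,v) = F(\rho_g(x,v),v), \quad F(r,v) := \int_0^r \theta(s,v)\, ds,
\end{equation*}
and then push the subdifferential through the integral via a three-step chain: subdifferentiate $F$ in its first variable (nonsmoothly), compose with the subgradient estimate for $\rho_g$ provided by Lemma \ref{estimate_of_pmord_rho:Appendix}, and finally apply a Leibniz-type rule for integral functionals. First I would analyse $e(\cdot,v)$ direction by direction. For $v \in F_g(\bar{x})$, since $F(\cdot,v)$ is absolutely continuous with derivative $\theta(\cdot,v)$, its partial (Mordukhovich) subdifferential in $r$ equals $I_\theta(\rho_g(x,v),v)$ as defined in \eqref{defin_Itheta:Appendix}. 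A standard nonsmooth chain rule combined with Lemma \ref{estimate_of_pmord_rho:Appendix} then yields
\begin{equation*}
\pmord_x e(x,v) \subseteq -\,\mathcal{G}(x,v), \qquad v \in F_g(\bar x).
\end{equation*}

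For $v \in I_g(\bar x)$ the situation is more delicate: the upper integration limit $\rho_g(x,v)$ may be infinite, but the $\eta_\theta$-growth condition is precisely designed so that the scalar bound $\rho_g(x,v)\bar\theta(\rho_g(x,v),v)\eta_\theta(\rho_g(x,v),v)$ tends to zero as $(x,v) \to (\bar x, v)$. Applying this together with Lemma \ref{estimate_of_pmord_rho:Appendix} and the norm bound implied by Definition \ref{def:thetagrowth:Appendix} gives, after shrinking the neighbourhood of $\bar x$ and uniformly in $v$ near $I_g(\bar x)$, a constant $M$ such that
\begin{equation*}
\|x^\ast\|_\ast \leq M \qquad \forall\, x^\ast \in \pmord_x e(x,v),
\end{equation*}
and moreover $\limsup_{(x,v) \to (\bar x,\bar v)} \pmord_x e(x,v) \subseteq \{0\}$ whenever $\bar v \in I_g(\bar x)$. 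This uniform bound, which holds on every compact set of directions by the continuity of $\rho_g$ and \eqref{overline_theta_bound:Appendix}, delivers the integrable dominating function needed below and already implies the local Lipschitz continuity of $\varphi$ near $\bar x$ by integrating the bound.

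Second, I would invoke a Leibniz/exchange rule for Mordukhovich subdifferentials of parametric integrals of the Correa--Hantoute--P\'erez-Aros type \cite{Correa_Hantoute_Perez-Aros_2019,Correa_Hantoute_Perez-Aros_2020}, mirroring the argument used in the proof of Theorem \ref{theo:Main}: approximate any $x^\ast \in \pmord \varphi(\bar x)$ by weak$^\ast$ limits of Fr\'echet subgradients taken at nearby points, use \cite[Lemma 2.1]{vanAckooij_Perez-Aros_2019} to localize these as integrals over the sphere of selections of $\pmord_x e(\cdot,v)$, and pass to the limit with the help of a Kuratowski--type outer convergence result (\cite[Corollary 4.1]{MR2197293}) on the weak$^\ast$-metrizable bounded sets of $\H^\ast$. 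Inserting the pointwise inclusion $\pmord_x e(\bar x,v) \subseteq -\mathcal{G}(\bar x,v)$ gives exactly the desired formula \eqref{inclusion:subdif2}, with the $w^\ast$-closure arising from the infinite-dimensional passage to the limit. In the finite-dimensional case, Lyapunov convexity makes the Aumann integral closed, so the closure operator can be dropped.

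The main obstacle I expect is the simultaneous handling of two nonsmoothnesses in infinite dimension: on the one hand, the fact that the radial ``antiderivative'' $F(\cdot,v)$ is only absolutely continuous, so its subdifferential in $r$ is the non-singleton set $I_\theta(\rho_g(x,v),v)$ given by the essential upper and lower limits in \eqref{defin_thetafunctions}; on the other hand, the weak$^\ast$ closure required by the integral subdifferential rule. The $\eta_\theta$-growth condition is the crucial ingredient for both: it controls the $x$-component of $\pmord g_i$ uniformly along the active directions, yielding the integrable upper bound that legitimates the Leibniz exchange and simultaneously the vanishing of subgradients along $I_g(\bar x)$, which is needed to avoid boundary contributions in the sphere decomposition.
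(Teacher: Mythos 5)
Your proposal follows essentially the same route as the paper: a pointwise estimate $\pmord_x e(x,v)\subseteq -\cl{}\conv\mathcal{G}(x,v)$ obtained by composing the set $I_\theta(\rho_g(x,v),v)$ with the estimate of Lemma \ref{estimate_of_pmord_rho:Appendix}, a uniform bound on these partial subdifferentials (via the $\eta_\theta$-growth condition along infinite directions, continuity of $\rho_g$, and compactness of $\mathbb{S}^{m-1}$) which yields the local Lipschitz continuity, and finally a Leibniz-type exchange rule together with Lyapunov/Aumann convexity to pass the subdifferential under the integral. The one imprecision is that your pointwise inclusion should carry the closed convex hull, $-\cl{}\conv\mathcal{G}(x,v)$, since the chain rule convexifies; this hull is only discarded after integration, precisely because the Aumann integral over the nonatomic measure $\mu_\zeta$ is already convex.
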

	 
	 \begin{proof}
	 First  let us notice that due to Lemma \ref{estimate_of_pmord_rho:Appendix} and \cite[Proposition 3.2]{vanAckooij_Perez-Aros_2022} we have that $\pmord_x e(x,v)\subset  -\cl{}\conv \mathcal{G}(x,v)$ for all    $x\in U$ and all $v\in F_g(x)$. Moreover, similarly as in \cite[Proposition 3.4 i)]{vanAckooij_Perez-Aros_2022}, $\pfrech_x e(x,v)\subseteq \{0\}=-\cl{}\conv \mathcal{G}(x,v)$ if $v\in I_g(x)$.

	 \textbf{Claim 1:}
	 For every fixed $\bar{v}\in \mathbb{S}^{m-1}$ with $\bar{v} \in F_g(\bar{x})$, there exist neighborhoods $U_{\bar{v}}$ of $\bar{x}$ and $V_{\bar{v}}$ of $\bar{v}$ and  $K_{\bar{v}}>0$ such that
	 	\begin{equation*}
	 	%\label{proof:thm1:appendix}
	 	\pmord_x e(x,v)\subset  -\cl{}\conv \mathcal{G}(x,v) \subseteq 
	 	K_{\bar{v}}\mathbb{B}^\ast \text{ for all }(x,v) \in U_{\bar{v}}\times V_{\bar{v}}.
	 	\end{equation*}\\
	 	By the continuity of $\rho$ (see \cite[Lemma 1]{Hantoute_Henrion_Perez-Aros_2017}) there exist neighborhoods $U_{\bar{v}}$ of $\bar{x}$, $V_{\bar{v}}$ of $\bar{v}$ and a constant $M>0$ such that $\rho_g(x,v)\leq M$ and $g(x,0)<0$ for all $(x,v)\in U_{{\bar{v}}}\times V_{\bar{v}}$. Hence, it follows that 
		 	  \begin{equation}\label{pmord:e:appendix}
	 	      	\pmord_x e(x,v) \subset  \cl{}\conv \mathcal{G}(x,v) \,\text{ for all }(x,v)\in U_{\bar{v}}\times V_{\bar{v}}.
	 	  \end{equation}
	 	   Now, for each $(x^\ast,z^\ast)\in\pmord g_i(x, \rho_g\left(x,v\right) Lv)$ in \eqref{pmord:e:appendix} we have that $z^\ast \in  \partial_z g_i(x, \rho_g \left( x,v\right) Lv)$ (see \cite[Lemma 4.8]{https://doi.org/10.48550/arxiv.2112.05571}) and in consequence (see \cite[Lemma 3]{Hantoute_Henrion_Perez-Aros_2017})
	 	   \begin{equation*}
	 	       \langle z^\ast,Lv\rangle\geq -\frac{g_i(x,0)}{\rho_g(x,v)}>0.
	 	   \end{equation*}
	 	   Hence, by \eqref{pmord:e:appendix} and \eqref{overline_theta_finite:Appendix}, for each $y^\ast\in \pmord_x e(x,v)$ there exists $(x^\ast,z^\ast)\in \pmord g_i(x, \rho_g \left( x,v\right) Lv)$ with $i\in T_x(v)$ such that
	 	   \begin{equation*}
	 	   %\label{appendix:bound:pmord_e:1stcase}
	 	       \|y^\ast\|\leq \frac{-1}{|g_i(x,0)|}\rho_g(x,v)\overline{\theta}(\rho_g(x,v),v)\|x^\ast\|
	 	   \end{equation*}
	 	   Therefore, by \eqref{overline_theta_bound:Appendix}, the continuity of $\rho_g$ and the fact that the functions $g_i$ are locally Lipschitz, we conclude the claim.\\
	 	
\textbf{Claim 2:} For every fixed $\bar{v}\in \mathbb{S}^{m-1}$ with $\bar{v} \in I_g(\bar{x})$ and $\epsilon>0$, there exist neighborhoods $U_{\bar{v}}$ of $\bar{x}$ and $V_{\bar{v}}$ of $\bar{v}$ such that
	 	\begin{equation*}
	 	    \pfrech_x e(x,v)\subseteq -\cl{}\conv\mathcal{G}(x,v)\subseteq \epsilon\mathbb{B}^{\ast} \text{ for all }(x,v) \in U_{\bar{v}}\times V_{\bar{v}}.
	 	\end{equation*}
	 	
Let $\epsilon>0$ and consider $l>0$ given by the $\eta_\theta$-growth condition in Definition \ref{def:thetagrowth:Appendix}. By continuity of $\rho_g$ (see \cite[Lemma 1]{Hantoute_Henrion_Perez-Aros_2017}), there exist neighborhoods $U_{\bar{v}}$ of $\bar{x}$ and $V_{\bar{v}}$ of $\bar{v}$, contained in $\mathbb{B}_{1/l}(\bar{x})$ and $\mathbb{B}_{1/l}(\bar{v})$ respectively,  such that
	 	    \begin{equation*}
	 	        \rho_g(x,v)\geq l\text{ and }\rho_g(x,v)\bar{\theta}(\rho_g(x,v),v)\eta_{\theta}(\rho_g(x,v),v)\leq \epsilon' \text{ for all }(x,v)\in U_{\bar{v}}\times V_{\bar{v}}.
	 	    \end{equation*}
where $\epsilon':=(\sup_{x\in U}\frac{1}{|g(x,0)|})^{-1}\epsilon$. Now, when $v\in F_g(x)$, we may follow as in Claim 1 to obtain
	 	    \begin{equation*}
\frac{\alpha  
	 		}{%
	 			|\left\langle z^\ast
	 			,Lv\right\rangle| } \|x^\ast\| \leq \frac{1}{|g(x,0)|}\rho_g(x,v)\bar{\theta}(\rho_g(x,v),v)\eta_\theta(\rho(x,v),v)\leq \epsilon,
	 	    \end{equation*}
which proves that $\cl{}\conv\mathcal{G}(x,v)\subseteq \epsilon \mathbb{B}^\ast$ for all $(x,v) \in U_{\bar{v}}\times V_{\bar{v}}$. Finally, the first inclusion follows from the first paragraph of the proof.
	 	   
\textbf{Claim 3:} For every fixed $\bar{v}\in \mathbb{S}^{m-1}$ with $\bar{v} \in I_g(\bar{x})$ we have that
	 \begin{equation*}
	 %\label{proof:thm1:appendix}
	 	\pmord_x e(\bar{x},\bar{v})\subseteq \{0\}=-\cl{}\conv \mathcal{G}(\bar{x},\bar{v}).
	 \end{equation*}

Let $y^\ast\in\pmord_xe(\bar{x},\bar{v})$ and choose $x_n\to \bar{x}$ and $y_n^\ast\rightharpoonup y^\ast$ with $y_n^\ast\in \pfrech_xe(x_n,\bar{v})$. Considering a sequence $\epsilon_n\to 0^+$ and claim 2 with $\epsilon_n$ we obtain that $y^\ast=0$.\\

\textbf{Claim 4:} There exists  a neighborhood $U'$ of $\bar{x}$ such that 
\begin{equation*}
%\label{pmordbounded:theorem:appendix}
    \pfrech_x e(x,v)\subseteq \kappa\mathbb{B}^\ast \;\text{ for all } (x,v)\in U' \times\mathbb{S}^{m-1}.
\end{equation*}

Indeed, by claims 1 and 2, we have that for every fixed $\bar{v}\in \mathbb{S}^{m-1}$ there exist neighborhoods $U_{\bar{v}}$ of $\bar{x}$ and $V_{\bar{v}}$ of $\bar{v}$ and  $K_{\bar{v}}>0$ such that
	 	\begin{equation*}
	 	%\label{proof:thm1:appendix}
	 	\pfrech_x e(x,v) \subseteq 
	 	K_{\bar{v}}\mathbb{B}^\ast \text{ for all }(x,v) \in U_{\bar{v}}\times V_{\bar{v}}.
	 	\end{equation*}

Now, since the family of neighborhoods $V_{\bar{v}}$ covers the compact set $\mathbb{S}^{m-1}$  we can pick a finite subcover, that is, there exists $N\in\mathbb{N}$ and some $v_1,\ldots,v_N\in \mathbb{S}^{m-1}$ such that
\begin{equation*}
   \mathbb{S}^{m-1}\subset \bigcup_{i=1}^N V_{v_i}.
\end{equation*}
Therefore, we choose a neighborhood $U'$ of $\bar{x}$ such that
\begin{equation*}
    U'\subset \bigcap_{i=1}^N U_{v_i}
\end{equation*}
and define $ \kappa:=\max\{K_{v_i}:\;i=1\ldots,N\}$ to conclude the claim.\\

\textbf{Claim 5:} Now by \cite[Proposition 3]{Hantoute_Henrion_Perez-Aros_2017}, it follows that the probability function \eqref{probfunc2:Appendix} is locally
	 	Lipschitz around $\bar x$ and 
	 	\begin{align*}
	 	%\label{inclusion:subdif}
	 		\pmord \varphi (\bar{x})\subseteq -\cl{w^\ast}\left(  \int\limits_{\mathbb{S}^{ m-1}}  	\cl{}\conv {\mathcal{G}(\bar{x},v)}  d\mu_\zeta(v) \right).
	 	\end{align*}

From \cite[Corollary 4.4]{https://doi.org/10.48550/arxiv.1803.05521} together with claim 4 we obtain the inclusion
\begin{equation*}
    \pmord \varphi (\bar{x})\subseteq \cl{w^\ast}\left(  \int\limits_{\mathbb{S}^{ m-1}}  	\pmord_x e(\bar{x},v)    d\mu_\zeta(v) \right).
\end{equation*}
Hence, by the first paragraph of the proof together with claim 3, it follows that $\pmord_x e(\bar{x},v) \subseteq -\cl{}\conv {\mathcal{G}(\bar{x},v)}$, concluding the proof of the claim.
\\

\textbf{Claim 6:}  The inclusion \eqref{inclusion:subdif2} holds. Notice that $\mathcal{G}(x,v)$ is integrably bounded (see \cite[p. 326]{Aubin_Frankowska_2009}) by claims 1 and 2. Therefore, the claim follows as a consequence of \cite[Theorem 8.6.4]{Aubin_Frankowska_2009} since $\H$ is a separable reflexive Banach space and $\mu_\zeta$ is nonatomic.
\end{proof}
	 
%	 \section{Proof of Theorem \ref{Theorem:epsilon}}\label{Proof_Theorem_Theorem_epsilon}
We are now in measure to apply the just given Theorem \ref{thm:clarkeappli:Appendix} in the setting of section \ref{section:inner_enlargement}.

\emph{Proof of Theorem \ref{Theorem:epsilon}}
	Let us notice that the probability function \eqref{P01eps} can be written as
		\begin{align*} 
		%\label{P01eps2} 
			\varphi_\epsilon(x):= \mathbb{P} (\omega \in \Omega: g(x,\xi(\omega)) \leq 0
			)
		\end{align*} 
		where $g(x,z):= \max\{g_i(x,z):i=1,\ldots,s\}$ and $ 
		    g_i(x,z)=\frac{1}{2}\dst^2(z, \S_i(x))-\frac{\epsilon^2}{2}.
	$
		Let us prove that the assumptions of Theorem \ref{thm:clarkeappli:Appendix} are satisfied. Indeed, by Assumption \eqref{Assump_Setvalued} we have that $g_i(x,0)=-\tfrac{\epsilon^2}{2}<0$ and that the ${g_i}'s$ are convex in the second variable. Due to Lemma \ref{lemma:derivative} the functions ${g_i}'s$ are locally Lipschitz around $(x,z)\in U\times \mathbb{R}^m$. Also, by Lemma \ref{domain:rho} Item $f)$, the sets of finite direction $\mathcal{F}_i$ defined in \eqref{finite_directions_Si} coincide with the sets of finite directions $F_{g_i}(x)$ defined in \eqref{appendix:finite_direction_sets}. That is, $\mathcal{F}_i(x)=F_{g_i}(x)$, and so, by taking complements we also have that $\mathcal{I}_i(x)=I_{g_i}(x)$ where $I_i(x)$ is the set of infinite directions defined in \eqref{appendix:infinite_direction_sets} and $\mathcal{I}_i(x)$ is defined in \eqref{infinite_directions_Si}. As a consequence the set of finite and infinite directions defined in \eqref{finite_directions_S} and \eqref{infinite_directions_S}, respectively, coincide with the sets of directions defined in this Appendix, that is, $\mathcal{F}(x)=F_g(x)$ and $\mathcal{I}(x)=I_g(x)$. The radial functions $\rho_i^\epsilon (x,v)$ are equal to the radial functions defined in  \eqref{appendix:radial_function_i}, and so, $\rho_\epsilon(x,v)$ is equal to the radial function defined in  \eqref{appendix:radial_function}. Now, let us prove that the family of functions $g_i's$ that we defined above satisfy the $\eta_\theta$-growth condition given in Definition \ref{def:thetagrowth:Appendix} for all $v\in \mathcal{I}(\bar{x})$. Consider $\bar{v}\in \mathcal{I}(\bar{x})$.  Since, the family of set-valued mappings $\S_i$ satisfies the $\eta$-growth condition for set-valued mappings at $\bar x$, we have that there exits $\hat{l}>0$ such that the family $\S_i$ satisfies \eqref{thetagrowth2:setvalued}. Let us set $l:=\hat{l}\epsilon$ and consider  $(x,v) \in \mathbb{B}_{1/l} (\bar{x}) \times
		\mathbb{B}_{1/l} (\bar{v})$ with $v\in \mathcal{F}(x)$ and $ \rho^i_\epsilon(x,v) \geq l$ with $i\in T_x^\epsilon(v)$. Let us notice that $\| \zep{\epsilon}{x}{v} - P_{\S_i(x)}(\zep{\epsilon}{x}{v})\|=\epsilon$, where $\zep{\epsilon}{x}{v}$ is defined in \eqref{definitionzep}. Now, by Lemma \ref{lemma:derivative}, the fact that $\eta$ is nondecreasing, $P_{\S_i(x)}(0)=0$ and by the non-expansiveness of the projection mapping we obtain the following sequence of inequalities
		\begin{align*}
			\| \pi_x(\pmord_x g_i(x,\zep{\epsilon}{x}{v}))\|  &
			\leq \|  D^\ast \S_i(x,P_{\S_i(x)}(\zep{\epsilon}{x}{v}))\|\epsilon
			\leq \hat{l}\epsilon \eta(\|P_{\S_i(x)}(\zep{\epsilon}{x}{v})\|)
			\leq  l \eta(\|\zep{\epsilon}{x}{v}\|).
		\end{align*}
		Hence it is enough to consider $\eta_{\theta}(\rho_\epsilon(x,v),v)=\eta(\|\zep{\epsilon}{x}{v}\|)$. Therefore, using Theorem \ref{thm:clarkeappli:Appendix} and the fact that 
		\begin{align*}
		%\label{inclusion_e_epsilon}
			\mathcal{G}(x,v)=\mathcal{M}_\epsilon(x,v)
		\end{align*}
		for all $v\in \mathbb{S}^{ m-1}$, we conclude that $\varphi_\epsilon$ is locally Lipschitz around $\bar{x}$ and that
		\begin{align*}
		%\label{eq:pmord_inclusion_previous}
			\pmord \varphi_\epsilon (\bar{x})\subseteq -\cl{{w}^\ast }\left(  \int\limits_{\mathbb{S}^{m-1}}  	 \mathcal{M}_\epsilon(\bar{x},v)   d\mu_\zeta(v)\right)
		\end{align*}
	Finally, upon noticing that the family of set-valued mappings $\S_i$ satisfies the $\eta$-growth condition for set-valued mappings at all $x\in U':=\mathbb{B}_{\frac{1}{2\hat{l}}}(\bar{x})$ or more specifically the family satisfies \eqref{thetagrowth2:setvalued} with $3\hat{l}$, we obtain \eqref{eq:pmord_inclusion} where without loss of generality we considered $\hat{l}$ large enough such that $U'\subset U$.
	%	Sketch of the proof:
	%	\begin{enumerate}
	%		\item  Growth condition 
	%	\begin{align}
	%		\| \pmord_x \frac{1}{2}\dst^{2}(z, \S(x)) \|  &\leq \|  D^\ast \S(x,P_{\S(x)}(\zep{\epsilon}{x}{v}) )(  P_{\S(x)}(\zep{\epsilon}{x}{v}) -\zep{\epsilon}{x}{v} ) \| \\
	%	&	\leq  l \eta_{\theta}(r,v) \| P_{\S(x)}(\zep{\epsilon}{x}{v}) -\zep{\epsilon}{x}{v}  \| = l \eta_{\theta}(r,v) \epsilon
	%	\end{align}	 
	%			\item  Slater condition: $g(x,\bar{z}) = -\epsilon^2/2$.
	%	\end{enumerate}
%\end{proof}
	 
	\bibliographystyle{plain}
	\bibliography{biblio_vanAckooij[up2date@15102015],references}

\begin{thebibliography}{10}

\bibitem{Aubin_Frankowska_2009}
J-P. Aubin and H.~Frankowska.
\newblock {\em Set-valued analysis}.
\newblock Modern Birkh\"{a}user Classics. Birkh\"{a}user Boston, Inc., Boston,
  MA, 2009.

\bibitem{MR2197293}
E.~J. Balder and A.~R. Sambucini.
\newblock Fatou's lemma for multifunctions with unbounded values in a dual
  space.
\newblock {\em J. Convex Anal.}, 12(2):383--395, 2005.

\bibitem{Bremer_Henrion_Moller_2015}
I.~Bremer, R.~Henrion, and A.~M\"{o}ller.
\newblock Probabilistic constraints via {SQP} solver: Application to a
  renewable energy management problem.
\newblock {\em Computational Management Science}, 12:435--459, 2015.

\bibitem{Clarke_1987}
F.H. Clarke.
\newblock {\em Optimisation and Nonsmooth Analysis}.
\newblock Classics in Applied Mathematics. Society for Industrial and Applied
  Mathematics, 1987.

\bibitem{Conway_1985}
J.~B. Conway.
\newblock {\em A Course in Functional Analysis}.
\newblock Springer-Verlag, 1985.

\bibitem{Correa_Hantoute_Perez-Aros_2019}
R.~Correa, A.~Hantoute, and P.~P{\'e}rez-Aros.
\newblock Characterizations of the subdifferential of convex integral functions
  under qualification conditions.
\newblock {\em Journal of Functional Analysis}, 277(1):227--254, 2019.

\bibitem{Correa_Hantoute_Perez-Aros_2020}
R.~Correa, A.~Hantoute, and P.~P{\'e}rez-Aros.
\newblock Subdifferential calculus rules for possibly nonconvex integral
  functions.
\newblock {\em SIAM J. Control Optim.}, 58(1):462--484, 2020.

\bibitem{https://doi.org/10.48550/arxiv.1803.05521}
R.~Correa, A.~Hantoute, and P.~Pérez-Aros.
\newblock Sequential and exact formulae for the subdifferential of nonconvex
  integral functionals.
\newblock 2018.

\bibitem{MR2766381}
M.~Fabian, P.~Habala, P.~H\'{a}jek, V.~Montesinos, and V.~Zizler.
\newblock {\em Banach space theory}.
\newblock CMS Books in Mathematics/Ouvrages de Math\'{e}matiques de la SMC.
  Springer, New York, 2011.
\newblock The basis for linear and nonlinear analysis.

\bibitem{Hantoute_Henrion_Perez-Aros_2017}
A.~Hantoute, R.~Henrion, and P.~P\'{e}rez-Aros.
\newblock Subdifferential characterization of continuous probability functions
  under {Gaussian} distribution.
\newblock {\em Mathematical Programming}, 174(1-2):167--194, 2019.

\bibitem{Henrion_Strugarek_2008}
R.~Henrion and C.~Strugarek.
\newblock Convexity of chance constraints with independent random variables.
\newblock {\em Computational Optimization and Applications}, 41:263--276, 2008.

\bibitem{Urruty_Lemarechal_1996a}
J.B. Hiriart-Urruty and C.~Lemar\'{e}chal.
\newblock {\em Convex Analysis and Minimization Algorithms I}.
\newblock Number 305 in Grundlehren der mathematischen Wissenschaften.
  Springer-Verlag Berlin Heidelberg, 2nd edition, 1996.

\bibitem{vanAckooij_Laguel_Malick_Matiussi-Ramalho_2022}
Y.~Laguel, W.~{van Ackooij}, J.~Malick, and G.~{Matiussi Ramalho}.
\newblock On the convexity of level-sets of probability functions.
\newblock {\em Journal of Convex Analysis}, 29(2):1--32, 2022.

\bibitem{Mordukhovich_2006}
B.~S. Mordukhovich.
\newblock {\em Variational Analysis and Generalized Differentiation I. Basic
  Theory.}, volume 330 of {\em Grundlehren der mathematischen Wissenschaften}.
\newblock Springer, Heidelberg, 2006.

\bibitem{Mordukhovich_2006b}
B.~S. Mordukhovich.
\newblock {\em Variational Analysis and Generalized Differentiation II.
  Applications}, volume 331 of {\em Grundlehren der mathematischen
  Wissenschaften}.
\newblock Springer, Heidelberg, 2006.

\bibitem{Mordukhovich_2018}
B.~S. Mordukhovich.
\newblock {\em Variational Analysis and Applications}.
\newblock Springer, {C}ham, 2018.

\bibitem{MR2179254}
B.~S. Mordukhovich and Nguyen~Mau Nam.
\newblock Subgradient of distance functions with applications to {L}ipschitzian
  stability.
\newblock {\em Math. Program.}, 104(2-3, Ser. B):635--668, 2005.

\bibitem{https://doi.org/10.48550/arxiv.2112.05571}
B.~S. Mordukhovich and P.~Pérez-Aros.
\newblock Sensitivity analysis of stochastic constraint and variational systems
  via generalized differentiation, 2021.

\bibitem{Prekopa_1971}
A.~Pr\'{e}kopa.
\newblock Logarithmic concave measures with applications to stochastic
  programming.
\newblock {\em Acta Scientiarium Mathematicarum (Szeged)}, 32:301--316, 1971.

\bibitem{Rockafellar_Wets_2009}
R.T. Rockafellar and R.~J.-B. Wets.
\newblock {\em Variational Analysis}, volume 317 of {\em Grundlehren der
  mathematischen Wissenschaften}.
\newblock Springer Verlag Berlin, 3rd edition, 2009.

\bibitem{Royset_Polak_2004}
J.O. Royset and E.~Polak.
\newblock Implementable algorithm for stochastic optimization using sample
  average approximations.
\newblock {\em Journal of Optimization Theory and Applications},
  122(1):157--184, 2004.

\bibitem{Royset_Polak_2007}
J.O. Royset and E.~Polak.
\newblock Extensions of stochastic optimization results to problems with system
  failure probability functions.
\newblock {\em Journal of Optimization Theory and Applications}, 133(1):1--18,
  2007.

\bibitem{Syrtseva_Oliveira_Demassy_vanAckooij_2023}
K.~Syrtseva, W.~{de Oliveira}, S.~Demassy, and W.~{van Ackooij}.
\newblock Minimizing the difference of convex and weakly convex functions via
  bundle method.
\newblock {\em Submitted paper}, pages 1--34, 2023.

\bibitem{Uryasev_1995}
S.~Uryas'ev.
\newblock Derivatives of probability functions and some applications.
\newblock {\em Annals of Operations Research}, 56:287--311, 1995.

\bibitem{vanAckooij_2020}
W.~{van Ackooij}.
\newblock A discussion of probability functions and constraints from a
  variational perspective.
\newblock {\em Set-Valued and Variational Analysis}, 28(4):585--609, 2020.

\bibitem{vanAckooij_Henrion_2014}
W.~{van Ackooij} and R.~Henrion.
\newblock Gradient formulae for nonlinear probabilistic constraints with
  {G}aussian and {G}aussian-like distributions.
\newblock {\em {SIAM} Journal on Optimization}, 24(4):1864--1889, 2014.

\bibitem{vanAckooij_Henrion_2016}
W.~{van Ackooij} and R.~Henrion.
\newblock ({S}ub-) {G}radient formulae for probability functions of random
  inequality systems under {G}aussian distribution.
\newblock {\em SIAM Journal on Uncertainty Quantification}, 5(1):63--87, 2017.

\bibitem{vanAckooij_Henrion_Perez-Aros_2019}
W.~{van Ackooij}, R.~Henrion, and P.~P\'{e}rez-Aros.
\newblock Generalized gradients for probabilistic/robust (probust) constraints.
\newblock {\em Optimization}, 69(7-8):1451--1479, 2020.

\bibitem{vanAckooij_Javal_Perez-Aros_2020}
W.~{van Ackooij}, P.~Javal, and P.~P\'{e}rez-Aros.
\newblock Derivatives of probability functions acting on parameter dependent
  unions of polyhedra.
\newblock {\em Set-Valued and Variational Analysis}, pages 1--33, 2021.

\bibitem{vanAckooij_Malick_2016}
W.~{van Ackooij} and J.~Malick.
\newblock Second-order differentiability of probability functions.
\newblock {\em Optimization Letters}, 11(1):179--194, 2017.

\bibitem{vanAckooij_Perez-Aros_2019}
W.~{van Ackooij} and P.~P\'{e}rez-Aros.
\newblock Generalized differentiation of probability functions acting on an
  infinite system of constraints.
\newblock {\em SIAM Journal on Optimization}, 29(3):2179--2210, 2019.

\bibitem{vanAckooij_Perez-Aros_2020}
W.~{van Ackooij} and P.~P\'{e}rez-Aros.
\newblock Gradient formulae for nonlinear probabilistic constraints with
  non-convex quadratic forms.
\newblock {\em Journal of Optimization Theory and Applications},
  185(1):239--269, 2020.

\bibitem{vanAckooij_Perez-Aros_2021}
W.~{van Ackooij} and P.~P\'{e}rez-Aros.
\newblock Gradient formulae for probability functions depending on a
  heterogenous family of constraints.
\newblock {\em Open Journal of Mathematical Optimization}, 2:1--29, 2021.

\bibitem{vanAckooij_Perez-Aros_2022}
W.~{van Ackooij} and P.~P\'{e}rez-Aros.
\newblock Generalized differentiation of probability functions: parameter
  dependent sets given by intersections of convex sets and complements of
  convex sets.
\newblock {\em Applied Mathematics \& Optimization}, 85(2):1--39, 2022.

\bibitem{Werner_2011}
D.~Werner.
\newblock {\em Funktionalanalysis}.
\newblock Springer Verlag, 7th edition, 2011.

\bibitem{Zalinescu_2002}
C.~Z\u{a}linescu.
\newblock {\em Convex Analysis in General spaces}.
\newblock World Scientific, 2002.

\end{thebibliography}
\end{document}